\newcommand{\map}[1]{\xrightarrow{#1}}
\newcommand{\iso}{\cong}
\newcommand{\Hom}{\mathrm{Hom}}
\newcommand{\Aut}{\mathrm{Aut}}
\newcommand{\End}{\mathrm{End}}
\newcommand{\Spec}{\mathrm{Spec}}
\newcommand{\Q}{\mathbb Q}
\newcommand{\Z}{\mathbb Z}
\newcommand{\R}{\mathbb R}
\newcommand{\C}{\mathbb C}
\newcommand{\F}{\mathbb F}
\newcommand{\A}{\mathbb A}
\newcommand{\co}{\mathcal O}
\newcommand{\FF}{{\mathcal{F}}}
\newcommand{\LL}{{\mathcal{L}}}
\newcommand{\VV}{\mathcal{V}}
\newcommand{\UU}{\mathcal{U}}
\renewcommand{\SS}{\mathcal{S}}
\newcommand{\ord}{\mathrm{ord}}
\newcommand{\length}{\mathrm{length}}
\newcommand{\Lie}{\mathrm{Lie}}
\newcommand{\inv}{\mathrm{inv}}
\newcommand{\GSpin}{\mathrm{GSpin}}
\newcommand{\Spin}{\mathrm{Spin}}
\newcommand{\SO}{\mathrm{SO}}
\newcommand{\GU}{\mathrm{GU}}
\newcommand{\GL}{\mathrm{GL}}
\newcommand{\SL}{\mathrm{SL}}
\newcommand{\action}{\bullet}
\begin{document}
\author{Benjamin Howard}
\author{George Pappas}
 \thanks{G.P. is partially supported by NSF grant  DMS-1102208}
\thanks{B.H. is partially supported by NSF grant  DMS-1201480}
\address{Dept.~of Mathematics\\ Boston College\\
Chestnut Hill\\ MA 02467-3806}
\address{Dept.~of Mathematics\\ Michigan State University\\ E. Lansing\\ MI 48824-1027}

\title{On the supersingular locus of the GU(2,2) Shimura variety}
\date{\today}

\begin{abstract}
We describe the supersingular locus of a $\GU(2,2)$ Shimura variety at a prime inert
in the corresponding quadratic imaginary field.
\end{abstract}

\maketitle

\theoremstyle{plain}
\newtheorem{theorem}{Theorem}[section]
\newtheorem{proposition}[theorem]{Proposition}
\newtheorem{lemma}[theorem]{Lemma}
\newtheorem{corollary}[theorem]{Corollary}
\newtheorem{conjecture}[theorem]{Conjecture}
\newtheorem{BigThm}{Theorem}

\theoremstyle{definition}
\newtheorem{definition}[theorem]{Definition}
\newtheorem{hypothesis}[theorem]{Hypothesis}

\theoremstyle{remark}
\newtheorem{remark}[theorem]{Remark}
\newtheorem{question}[theorem]{Question}

\numberwithin{equation}{section}
\renewcommand{\theBigThm}{\Alph{BigThm}}

\section{Introduction}

This paper contributes to the theory of integral models of Shimura varieties,
and, in particular, to the problem of explicitly describing the basic locus in the reduction modulo $p$
of a canonical integral model.
In many cases where this integral model is a moduli space of abelian varieties with additional structures,
the basic locus coincides with the supersingular locus, \emph{i.e.}~with the subset
of the moduli in positive characteristic where the corresponding abelian variety is isogenous to a product
of supersingular elliptic curves. The first investigations of a higher dimensional supersingular locus were
for the Siegel moduli space, and are due to Koblitz, Katsura-Oort, and Li-Oort.  See the introduction
of \cite{Vol} for these and other references.
More recently, such explicit descriptions for certain unitary and orthogonal Shimura varieties have found  
applications to Kudla's program  relating arithmetic intersection numbers of special cycles on Shimura varieties to
Eisenstein series; this motivated further study, as in \cite{KR, KR1, KR2, KRlocal} and \cite{VolWed}.

In this paper, we study the supersingular locus of the special fiber of  a $\GU(2,2)$ 
Shimura variety  at an odd prime  inert in the corresponding imaginary quadratic field. 
Our methods borrow liberally from Vollaard \cite{Vol} and Vollaard-Wedhorn \cite{VolWed}, who considered the 
 $\GU(n, 1)$ Shimura varieties at  inert primes, and Rapoport-Terstiege-Wilson \cite{RTW}, who considered
the  $\GU(n, 1)$ Shimura varieties at ramified primes.   
If one attempts to directly imitate the arguments of \cite{RTW,Vol,VolWed} 
to study the general $\GU(r,s)$ Shimura variety, the method breaks down 
at a crucial point.  The key new idea for overcoming this 
obstacle is to exploit the linear algebra  underlying a twisted version of the exceptional isomorphism   
$\mathrm{SU}(2,2)\iso \Spin(4,2)$ corresponding to the Dynkin diagram identity $A_3= D_3$.   
As such,  we do not expect our methods to extend to unitary groups
of other signatures (although we do hope that our result
will eventually help to predict the shape of the answer in the general case).  
The problem of understanding the supersingular locus of the 
$\GU(3,2)$ Shimura variety, for example, remains open.

However,  our methods should   extend to the family of 
$\mathrm{GSpin}(n,2)$ Shimura varieties.  Work of Kisin \cite{Kisin} and Madapusi Pera \cite{MP} 
(see also the papers of Vasiu)   provides us with 
 a good theory of integral models for these Shimura varieties, and  recent work of  
 W.~Kim \cite{Kim} gives a good theory of 
 Rapoport-Zink spaces  as well.  An extension of our results in this direction would have applications
to Kudla's program, for example by allowing one to generalize the work of 
Kudla-Rapoport  \cite{KR1,KR2} from  $\GSpin(2,2)$ and $\GSpin(3,2)$ Shimura varieties to the general
$\GSpin(n,2)$ case.  Using the isomorphism between $\GSpin(6,2)$ and the similitude group  
of a four-dimensional symplectic module over the  Hamiltonian quaternions \cite{FH}, one could also expect to 
generalize B\"ultel's  results \cite{Bul} on the  supersingular locus of the 
moduli space of polarized abelian eightfolds
with an action of a definite quaternion algebra.  More ambitiously, one could hope to exploit 
the connection between polarized K3 surfaces and the $\mathrm{GSpin}(19,2)$ Shimura variety in order to 
study the moduli space of supersingular K3 surfaces.  Some of these topics will be pursued
in subsequent papers.

As this paper was being prepared, G\"ortz and He were conducting  a general study of 
basic minuscule affine Deligne-Lusztig varieties  for equicharacteristic discrete valued fields.  
The preprint  \cite{GH} provides a list of cases where these affine Deligne-Lusztig varieties  can be expressed as a union 
of usual Deligne-Lusztig varieties, and that list contains an equicharacteristic analogue of the  $\GU(2,2)$
Rapoport-Zink space considered here.  These results  of  G\"ortz and He  in the equicharacteristic case
are analogous to our  mixed characteristic results.

The authors would like to thank M. Rapoport for many useful suggestions,
and X. He for communicating his joint results with U. G\"ortz.
 
\subsection{The local result}

Our main result concerns the structure of the Rapoport-Zink space parametrizing quasi-isogenies
between certain $p$-divisible groups with extra structure.  
Fix an algebraically closed field $k$ of characteristic $p>2$, let $W$ be the 
ring of Witt vectors over $k$, and let $E/\Q_p$ be an unramified 
degree two extension.  Consider the family of triples $(G,\iota,\lambda)$,
defined over $W$-schemes $S$ on which $p$ is locally nilpotent, consisting of a 
supersingular $p$-divisible group $G$ with an action $\iota:\co_E\to \End(G)$
and a principal polarization $\lambda : G\to G^\vee$.  We require that the action $\iota$ and the polarization
$\lambda$ be compatible in the sense of (\ref{linear polarization}), and that the action of $\co_E$
on $\Lie(G)$ satisfy the signature $(2,2)$ determinant condition of (\ref{kottwitz}).
A choice of one such triple $(\bm{G},\bm{\iota},\bm{\lambda})$ over $k$ as a basepoint
determines the \emph{Rapoport-Zink space}, $\mathscr{M}$, parametrizing 
quadruples $(G,\iota,\lambda,\varrho)$ in which 
$\varrho: G\times_S S_0 \to \bm{G}\times_k S_0$ is an $\co_E$-linear quasi-isogeny  
under which $\bm{\lambda}$ pulls back to a $\Q_p^\times$ multiple $c(\varrho) \lambda$. 
Here $S_0=S\times_W k$.   The Rapoport-Zink space $\mathscr{M}$ is a formal scheme 
over $W$, and admits a decomposition into open and closed formal
subschemes $\mathscr{M}=\biguplus_{\ell\in \Z} \mathscr{M}^{(\ell)}$,
where $\mathscr{M}^{(\ell)}$ is the locus where $\ord_p(c(\varrho)) = \ell$.
Here and elsewhere, we use the symbol $\biguplus$ to denote disjoint union.
The group $p^\Z$ acts on $\mathscr{M}$, where the action of $p$ sends 
$(G,\iota,\lambda,\varrho)\mapsto (G,\iota,\lambda, p\varrho)$.
This action has $\mathscr{M}^{(0)} \uplus \mathscr{M}^{(1)}$ as a fundamental domain.
In fact the action of $p^\Z$ extends to a larger group $J$ which acts transitively on the set
$\{ \mathscr{M}^{(\ell)} : \ell\in \Z\}$.
Define  
\[
\mathscr{N} = p^\Z \backslash \mathscr{M}
\]
and let $\mathscr{N}^+$ and $\mathscr{N}^-$
be the images of  $\mathscr{M}^{(0)}$ and $\mathscr{M}^{(1)}$, respectively, under the 
quotient map $\mathscr{M} \to \mathscr{N}$.

  In Section \ref{s:moduli} we construct a $6$-dimensional  $\Q_p$-vector space 
\[
\bm{L}_\Q^\Phi \subset \End( \bm{G} )_\Q
\] 
of \emph{special quasi-endomorphisms} of $\bm{G}$ as the $\Phi$-fixed vectors in a slope
$0$ isocrystal $(\bm{L}_\Q,\Phi)$.  The vector space $\bm{L}_\Q^\Phi$ is endowed with a 
$\Q_p$-valued quadratic  form $Q(x) =x\circ x$, and we define a \emph{vertex lattice} in 
$\bm{L}_\Q^\Phi$ to be a $\Z_p$-lattice $\Lambda\subset \bm{L}_\Q^\Phi$ such that 
\[
p\Lambda\subset \Lambda^\vee \subset \Lambda.
\]
The \emph{type} $t_\Lambda \in \{2,4,6\}$ of $\Lambda$ is the dimension of $\Lambda/\Lambda^\vee$.
To each   point $(G,\iota,\lambda,\varrho)$ of $\mathscr{N}$, the quasi-isogeny $\varrho$
allows us to view $\Lambda$ as a lattice of quasi-endomorphisms of $G$.  Let 
$\widetilde{\mathscr{N}}_\Lambda \subset\mathscr{N}$ be the locus of points where 
 $\Lambda\subset \End(G)$  (\emph{i.e.}~the locus where these quasi-endomorphisms are integral).  It is a closed formal subscheme 
of $\mathscr{N}$, whose underlying reduced $k$-scheme we denote by $\mathscr{N}_\Lambda$.
We show that the underlying reduced subscheme $\mathscr{N}_\mathrm{red}$ of $\mathscr{N}$
is covered by these closed subschemes:
\[
\mathscr{N}_\mathrm{red}= \bigcup_\Lambda \mathscr{N}_\Lambda,
\]
and that 
\[
\mathscr{N}_{\Lambda_1} \cap \mathscr{N}_{\Lambda_2} = 
\begin{cases}
\mathscr{N}_{ \Lambda_1 \cap \Lambda_2} & \mbox{if $\Lambda_1\cap \Lambda_2$ is a vertex lattice} \\
\emptyset & \mbox{otherwise}
\end{cases}
\]
where the left hand side is understood to mean the reduced subscheme underlying the scheme-theoretic 
intersection (we suspect that the scheme-theoretic intersection is already reduced, but are unable to provide a proof).

Section \ref{strata} is devoted to understanding the structure of 
$\mathscr{N}_\Lambda^\pm = \mathscr{N}_\Lambda \cap \mathscr{N}^\pm$.
Setting $d_\Lambda=t_\Lambda/2$, we prove that  $\mathscr{N}_\Lambda^\pm$ 
is a projective, smooth, and irreducible $k$-scheme of dimension $d_\Lambda-1$.  In fact,
\begin{enumerate}
\item
if $d_\Lambda=1$ then $\mathscr{N}_\Lambda^\pm$ is  single point,
\item
if $d_\Lambda=2$ then $\mathscr{N}_\Lambda^\pm$ is isomorphic to  $\mathbb{P}^1$,
\item
if $d_\Lambda=3$ then $\mathscr{N}_\Lambda^\pm$ is isomorphic to the Fermat hypersurface 
\[
x_0^{p+1}+x_1^{p+1}+x_2^{p+1}+x_3^{p+1} = 0.
\]
\end{enumerate}
The irreducible components of $\mathscr{N}^\pm$ are precisely the closed 
subschemes $\mathscr{N}^\pm_\Lambda$ indexed by the type $6$ vertex lattices.
From  this we deduce the following theorem.

\begin{BigThm}
The underlying reduced scheme $\mathscr{M}^{(\ell)}_\mathrm{red}$ of $\mathscr{M}^{(\ell)}$ is connected.
Every irreducible component of $\mathscr{M}^{(\ell)}_\mathrm{red}$ is a smooth $k$-scheme of 
dimension $2$, isomorphic to the Fermat hypersurface
\[
x_0^{p+1}+x_1^{p+1}+x_2^{p+1}+x_3^{p+1}=0.
\]
If two irreducible components intersect nontrivially,  the reduced scheme underlying their 
scheme-theoretic intersection is either a point or a projective line.  
\end{BigThm}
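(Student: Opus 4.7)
The plan is to deduce this theorem mechanically from the structural results about $\mathscr{N}^\pm$ already described, together with one additional combinatorial connectivity input about the poset of vertex lattices. First I would record the effect of the $p$-action on the decomposition $\mathscr{M}=\biguplus_{\ell}\mathscr{M}^{(\ell)}$: the identity $c(p\varrho)=p^2 c(\varrho)$ shows that multiplication by $p$ sends $\mathscr{M}^{(\ell)}$ isomorphically onto $\mathscr{M}^{(\ell+2)}$, so each $\mathscr{M}^{(\ell)}$ maps isomorphically to $\mathscr{N}^+$ or $\mathscr{N}^-$ in the quotient $\mathscr{N}=p^{\Z}\backslash\mathscr{M}$, according to the parity of $\ell$. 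Consequently the theorem for $\mathscr{M}^{(\ell)}_\mathrm{red}$ is equivalent to the corresponding statements for $\mathscr{N}^\pm_\mathrm{red}$.

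Next I would apply the results of Section \ref{strata}: the irreducible components of $\mathscr{N}^\pm_\mathrm{red}$ are precisely the closed subschemes $\mathscr{N}^\pm_\Lambda$ with $\Lambda$ a vertex lattice of type $t_\Lambda=6$, and each such $\mathscr{N}^\pm_\Lambda$ is a smooth projective irreducible surface isomorphic to the Fermat hypersurface $x_0^{p+1}+\cdots+x_3^{p+1}=0$. This gives part (2). For part (3), if $\Lambda_1,\Lambda_2$ are distinct type $6$ vertex lattices, the intersection formula
\[
\mathscr{N}_{\Lambda_1}\cap \mathscr{N}_{\Lambda_2}=\mathscr{N}_{\Lambda_1\cap \Lambda_2}
\]
(when the right side makes sense) forces $\mathscr{N}^\pm_{\Lambda_1}\cap\mathscr{N}^\pm_{\Lambda_2}$ to be empty or of the form $\mathscr{N}^\pm_{\Lambda}$ for some $\Lambda=\Lambda_1\cap\Lambda_2$ with $t_\Lambda\in\{2,4\}$. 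By the classification in Section \ref{strata}, such an $\mathscr{N}^\pm_\Lambda$ is a single point (if $t_\Lambda=2$) or a copy of $\mathbb{P}^1$ (if $t_\Lambda=4$), which is precisely what (3) asserts.

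The remaining and genuinely nontrivial statement is the connectedness of $\mathscr{N}^\pm_\mathrm{red}$, which I expect to be the main obstacle. Since this scheme is covered by the irreducible components $\mathscr{N}^\pm_\Lambda$ indexed by type $6$ vertex lattices, and two such components meet precisely when their intersection in $\bm{L}_\Q^\Phi$ is again a vertex lattice, connectedness of $\mathscr{N}^\pm_\mathrm{red}$ reduces to connectedness of the graph $\mathcal{G}$ whose vertices are the type $6$ vertex lattices and whose edges join pairs with vertex-lattice intersection. To prove this, I would exploit the twisted exceptional isomorphism mentioned in the introduction to identify $\bm{L}_\Q^\Phi$ as an explicit six-dimensional quadratic space over $\Q_p$ of the appropriate Witt type, and then view $\mathcal{G}$ in terms of the Bruhat-Tits building of its special orthogonal group. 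The group $J$ of self-quasi-isogenies acts on the set of type $6$ vertex lattices through this orthogonal group, transitively (or with finitely many orbits to be checked), and one shows that any two type $6$ vertex lattices can be joined by a chain of type $6$ lattices whose successive intersections have type $2$ or $4$. The existence of such chains is the main combinatorial lemma; in practice one proves it by an explicit neighbor construction, choosing an isotropic line in $\Lambda/\Lambda^\vee$ to produce an adjacent vertex lattice and iterating.
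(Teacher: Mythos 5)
Your proposal is correct and follows essentially the same route as the paper. The paper also reduces the statement for $\mathscr{M}^{(\ell)}_\mathrm{red}$ to that for $\mathscr{N}^\pm_\mathrm{red}$ (using the $J$-action, so that in fact all $\mathscr{M}^{(\ell)}$ are isomorphic, not just those of a fixed parity), identifies the irreducible components with the $\mathscr{N}^\pm_\Lambda$ for $t_\Lambda = 6$ via Theorem \ref{thm:comparison} and the analysis of Section \ref{ss:DL special cases}, reads off the intersection behavior from the formula $\mathscr{N}_{\Lambda_1}\cap\mathscr{N}_{\Lambda_2}=\mathscr{N}_{\Lambda_1\cap\Lambda_2}$ together with the classification of the small-type strata, and deduces connectedness from the Bruhat-Tits building of $\SO(\bm{L}_\Q^\Phi)$ (Corollary \ref{cor:connected}). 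Your graph $\mathcal{G}$ on type-$6$ lattices is connected iff the paper's simplicial complex $\VV$ on all vertex lattices is connected, since any two type-$6$ lattices containing a common vertex lattice $\Lambda$ automatically have vertex-lattice intersection (as $\Lambda_1^\vee + \Lambda_2^\vee \subset \Lambda^\vee \subset \Lambda \subset \Lambda_1\cap\Lambda_2$), and every smaller vertex lattice lies in some type-$6$ lattice.
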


See  Sections \ref{ss:main} and \ref{ss:building2} for a more detailed description of $\mathscr{M}_\mathrm{red}$.

\subsection{The global result}
 
 In Section \ref{s:global} we consider the global situation.  Let $E$ be a quadratic imaginary field,
 and let $p>2$ be inert in $E$. Let $\co\subset E$ be the integral closure of $\Z_{(p)}$,
 and let $V$ be a free $\co$-module of rank $4$ endowed with a perfect $\co$-valued
 Hermitian form of signature $(2,2)$.  Let  $G=\GU(V)$ be the group of unitary similitudes
of $V$,  a reductive group over $\Z_{(p)}$.   Fix a compact open subgroup $U^p \subset G(\A_f^p)$, 
which we assume is sufficiently small, and define
$U_p=G(\Z_p)$ and $U=U_pU^p \subset G(\A_f)$.

Using this data we define a scheme $M_U$, smooth of relative dimension $4$ over $\Z_{(p)}$,
as a moduli space of abelian fourfolds, up to prime-to-$p$-isogeny, with additional structure, in such a way that
the complex fiber of $M_U$ is the Shimura variety
\[
M_U(\C) = G(\Q) \backslash  (  \mathcal{D} \times G(\A_f) / U ).
\]
Here $\mathcal{D}$ is the Grassmannian of negative definite planes in $V\otimes_{\co}  \C$.

 Let $k$ be an algebraically closed  field of characteristic $p$,  and denote by $M_U^\mathrm{ss}$  
 the reduced supersingular locus  of the geometric special fiber $M_U \times_{\Z_{(p)} }k$. 
 The uniformization theorem of Rapoport and Zink expresses $M_U^\mathrm{ss}$
 as a disjoint union of quotients of the scheme $\mathscr{M}_\mathrm{red}$ described above.
As a consequence we obtain the following result.

 \begin{BigThm}
The $k$-scheme $M_U^\mathrm{ss}$ has pure dimension $2$.  
For $U^p$ sufficiently small, all irreducible components of 
$M_U^\mathrm{ss}$ are isomorphic to the Fermat hypersurface 
\[
x_0^{p+1}+x_1^{p+1}+x_2^{p+1} + x_3^{p+1}=0.
\]
If two irreducible components intersect nontrivially,  the reduced scheme underlying their 
scheme-theoretic intersection is either a point or a projective line.  
\end{BigThm}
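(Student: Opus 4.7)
The plan is to reduce the global theorem to the local one (the previous BigThm describing $\mathscr{M}_{\mathrm{red}}$) via the $p$-adic uniformization theorem of Rapoport--Zink. As referenced in the paragraph preceding the statement, this theorem expresses $M_U^{\mathrm{ss}}$ as a disjoint union of quotients of $\mathscr{M}_{\mathrm{red}}$ by arithmetic subgroups. Concretely, one has an isomorphism of $k$-schemes
\[
M_U^{\mathrm{ss}} \iso I(\Q) \backslash \bigl( \mathscr{M}_{\mathrm{red}} \times G(\A_f^p)/U^p \bigr),
\]
where $I$ is the inner form of $G$ associated with a fixed supersingular point, acting diagonally: on $\mathscr{M}_{\mathrm{red}}$ through its realization inside the group $J$ of quasi-isogenies preserving the polarization up to scalar, and on $G(\A_f^p)/U^p$ through the natural identification $I(\A_f^p) = G(\A_f^p)$. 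After choosing a set of representatives for the (finite) double coset set $I(\Q) \backslash G(\A_f^p)/U^p$, the right-hand side decomposes as a finite disjoint union of quotients $\Gamma_i \backslash \mathscr{M}_{\mathrm{red}}$, where each $\Gamma_i \subset I(\Q)$ is a discrete subgroup of $J$ commensurable with the stabilizer of a lattice.

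Next I would analyze the action of $\Gamma_i$ on $\mathscr{M}_{\mathrm{red}}$ using what was established locally. The key point is that $J$ acts on $\mathscr{M}_{\mathrm{red}}$ in a manner compatible with the decomposition $\mathscr{N}_{\mathrm{red}} = \bigcup_\Lambda \mathscr{N}_\Lambda$: elements of $J$ permute the vertex lattices $\Lambda \subset \bm{L}_\Q^\Phi$ via their action on the special quasi-endomorphisms, and the image of $\Gamma_i$ in $J/p^\Z$ is discrete in its action on the (locally finite) building of vertex lattices. Since $\Gamma_i$ also acts on the prime-to-$p$ Tate module through $G(\A_f^p)$, shrinking $U^p$ shrinks $\Gamma_i$. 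A standard argument (as in Vollaard--Wedhorn, Section 6) shows that for $U^p$ sufficiently small, $\Gamma_i$ acts freely on $\mathscr{M}_{\mathrm{red}}$, and moreover acts freely on the set of irreducible components in the sense that no nontrivial element of $\Gamma_i$ fixes any irreducible component setwise. In particular, the quotient map $\mathscr{M}_{\mathrm{red}} \to \Gamma_i \backslash \mathscr{M}_{\mathrm{red}}$ restricts to an isomorphism on each irreducible component.

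Granting this, the three claims transfer directly from the local result. Purely dimension $2$ is local in the \'etale topology, so it is inherited. Each irreducible component of $M_U^{\mathrm{ss}}$ is the isomorphic image of an irreducible component of $\mathscr{M}_{\mathrm{red}}$, hence isomorphic to the Fermat hypersurface $x_0^{p+1}+x_1^{p+1}+x_2^{p+1}+x_3^{p+1}=0$ by Theorem~A. Finally, if two distinct irreducible components $C_1, C_2$ of $M_U^{\mathrm{ss}}$ meet nontrivially, one can lift them to components $\widetilde C_1, \widetilde C_2$ of $\mathscr{M}_{\mathrm{red}}$ meeting nontrivially (after replacing $\widetilde C_2$ by a $\Gamma_i$-translate), and then their scheme-theoretic intersection --- reduced --- is a point or a projective line by the local result, and this description descends.

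The main obstacle will be the step verifying that $\Gamma_i$ acts freely on components for $U^p$ small. One must show that any element of $\Gamma_i$ stabilizing an irreducible component $\mathscr{N}_\Lambda^\pm$ also stabilizes the vertex lattice $\Lambda$ and acts on $\mathscr{N}_\Lambda^\pm$ through a finite group of automorphisms of the Fermat hypersurface, and then use that elements of $I(\Q) \cap U$ with $U$ small must be trivial (by a standard Minkowski-type argument applied to the reductive group $I$). Once this finite-order automorphism is shown to be the identity by a congruence condition coming from $U^p$, freeness follows. This is the one place where the "$U^p$ sufficiently small" hypothesis is doing real work, and the estimate has to be uniform over the finitely many cosets $\Gamma_i$.
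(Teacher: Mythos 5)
Your proposal follows essentially the same route as the paper: state the Rapoport--Zink uniformization isomorphism, decompose the right-hand side into finitely many quotients $\Gamma_i\backslash\mathscr{M}_{\mathrm{red}}$, and verify that for $U^p$ small the $\Gamma_i$ act freely so the local structure descends. The paper in fact gives no further detail beyond citing Vollaard's Corollary 6.2 for this step, and your sketch is a correct unpacking of that citation.
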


 
 \subsection{Notation}
 
 
We use the following notation throughout Sections \ref{s:moduli} and \ref{strata}.
Fix an odd  prime $p$ and an unramified quadratic extension $E$ of the field of $p$-adic numbers $\Q_p$.  
The nontrivial Galois automorphism of $E$ is denoted
$\alpha\mapsto \overline{\alpha}$.  Let $k$ be an algebraically closed field of characteristic $p$.  Its ring
of Witt vectors $W=W(k)$ is a complete discrete valuation ring with residue field $k=W/pW$ and fraction
field $W_\Q$.  Label the two embeddings of $\co_E$ into $W$ as
\[
\psi_0: \co_E \to W\qquad \psi_1:\co_E\to W,
\]
and denote by $\sigma$ both the absolute Frobenius $x\mapsto x^p$ on $k$ and 
its unique lift to a ring automorphism of $W$.    Denote by 
$
\epsilon_0,\epsilon_1\in \co_E\otimes W
$ 
the orthogonal idempotents characterized by 
\[
\epsilon_i M =\{ x\in M : (\alpha \otimes 1) \cdot x = ( 1 \otimes \psi_i(\alpha) )  \cdot x , \ \forall \alpha\in \co_E \}
\]
for any $\co_E \otimes W$-module $M$.  For any $\Z$-module $M$ we abbreviate
 $M_\Q=M\otimes_\Z\Q$.  In particular, $M_\Q=M\otimes_W W[1/p]$ for any $W$-module $M$.


\section{Moduli spaces and lattices}
\label{s:moduli}


In this section we recall the Rapoport-Zink space of a $\GU(2,2)$ Shimura variety, and define a
 stratification of the underlying reduced scheme.


\subsection{The Rapoport-Zink space}
\label{ss:RZ}


Let $\mathrm{Nilp}_W$ be the category of $W$-schemes on which $p$ is locally nilpotent.
We wish to parametrize  triples $(G,\iota,\lambda )$ over objects  $S$ of $\mathrm{Nilp}_W$ in which 
\begin{itemize}
\item
$G$  is a supersingular $p$-divisible group  of dimension $4$,
 \item
$\iota :\co_E \to \End(G)$ is an action of $\co_E$ on $G$,
\item 
 $\lambda:G \to G^\vee$ is a principal polarization.
\end{itemize}
We further require that every $\alpha \in \co_E$ satisfies both the \emph{$\co_E$-linearity condition}
\begin{equation}\label{linear polarization}
\lambda\circ \iota (\overline{\alpha} )  = \iota(\alpha)^\vee \circ \lambda ,
\end{equation}
 and the  \emph{signature $(2,2)$-condition}
\begin{equation}\label{kottwitz}
\det(T- \iota(\alpha) ; \Lie(G) ) = (T-\psi_0(\alpha))^2(T-\psi_1(\alpha))^2
\end{equation}
as sections of $\co_S[T]$.  The signature $(2,2)$ condition is equivalent to
each of the $\co_S$-module direct summands in 
$
\Lie(G) =\epsilon_0\Lie(G) \oplus \epsilon_1\Lie(G)
$
being locally free of rank $2$.

Fix one such triple $(\bm{G},\bm{\iota},\bm{\lambda})$ over $k$ as a base point, 
and let $\mathscr{M}$ be the functor on $\mathrm{Nilp}_W$
sending $S$ to the set of isomorphism classes of quadruples $(G,\lambda,i,\varrho)$ 
over $S$ where $(G, \iota, \lambda )$ is as above, and 
$
\varrho: G_{ /S_0} \to \bm{G}_{/S_0} 
$
is an $\co_E$-linear quasi-isogeny such that $\varrho^* \bm{\lambda} = c(\varrho) \lambda$ 
for some $c(\varrho) \in \Q_p^\times$.   Here $S_0$ is the $k$-scheme $S\otimes_W k$.    
The functor $\mathscr{M}$ is represented by a formal scheme locally of finite type over 
$\mathrm{Spf}(W)$ by \cite{RZ}.  There is a decomposition 
$\mathscr{M}=\biguplus_{\ell \in \Z} \mathscr{M}^{(\ell)}$ into open and closed formal subschemes, 
where $\mathscr{M}^{(\ell)}$ is the locus of points where $\ord_p (  c(\varrho) ) = \ell$.

Let  $J \subset \End(\bm{G})_\Q^\times$ denote the subgroup of $E$-linear elements such that 
$g^*\bm{\lambda} = \nu(g)\bm{\lambda}$ for some $\nu(g)\in \Q_p^\times$. The group $J$ acts on 
$\mathscr{M}$ in an obvious way:
\[
g\cdot (G, \iota, \lambda, \varrho) = (G, \iota, \lambda, g\circ \varrho).
\]
As usual, the group $J$ is  the $\Q_p$-points of a reductive group  over $\Q_p$.
In fact, by  \cite[Remark 1.16]{Vol}  this reductive group  is  the   group of unitary similitudes of
 the split Hermitian space of dimension  $4$ over $E$.  In particular the derived subgroup 
 $J^\mathrm{der}$ is isomorphic to the special unitary  group, and the similitude character 
 $\nu:J\to \Q_p^\times$ is surjective.   Note that the action of any $g \in J$ with $\ord_p(\nu(g))=1$ defines an 
isomorphism $ \mathscr{M}^{(\ell)} \iso \mathscr{M}^{(\ell+1)}.$

As a special case of this action,  the group $p^\Z$ acts on $\mathscr{M}$ by $p\cdot (G,\iota, \lambda,\varrho) = (G, \iota, \lambda , p\varrho)$, 
and  the quotient  $\mathscr{N} = p^\Z \backslash \mathscr{M}$
has $\mathscr{M}^{(0)}\uplus \mathscr{M}^{(1)}$ as a fundamental domain.
Let $\mathscr{N}^+\iso \mathscr{M}^{(0)}$ and $\mathscr{N}^-\iso \mathscr{M}^{(1)}$ 
be the open and closed formal subschemes of
$\mathscr{N}$ on which $\ord_p(c(\varrho))$ is even and odd,  respectively.  By the previous paragraph there is an isomorphism
$\mathscr{N}^+\iso \mathscr{N}^-$, and we will see later  in 
Theorem \ref{thm:connected} that $\mathscr{N}^+$ and $\mathscr{N}^-$ are 
precisely the connected components of $\mathscr{N}$.


\subsection{Special endomorphisms}


 In this subsection we will define a $\Q_p$-subspace
\[
\bm{L}_\Q^\Phi \subset \End(\bm{G})_\Q
\]
of \emph{special quasi-endomorphisms} of $\bm{G}$ in such a way that 
$x\mapsto x\circ x$ defines a $\Q_p$-valued quadratic form on $\bm{L}_\Q^\Phi$.
The subspace $\bm{L}_\Q^\Phi$ is not quite canonical; it will depend on the auxiliary choice of a 
certain tensor $\bm{\omega}$ in the top exterior power of the Dieudonn\'e module of $\bm{G}$.

Denote by $\bm{D}$ the covariant Dieudonn\'e module of $\bm{G}$, with its induced action of $\co_E$
and induced  alternating form 
$
\lambda : \bigwedge_W^2\bm{D} \to W
$
satisfying $\lambda( Fx,  y ) =  \lambda( x, V y )^\sigma.$
Under the covariant conventions, $\Lie(\bm{G}) \iso \bm{D}/V\bm{D}
$ as $k$-vector spaces with $\co_E$-actions. 
Abbreviate $\bigwedge^\ell_E \bm{D} = \bigwedge^\ell_{\co_E \otimes W} \bm{D}.$   
Once we fix a  $\delta\in \co_E^\times$ 
satisfying $\delta^\sigma =-\delta$,  there is a unique Hermitian form
\[
\langle\cdot,\cdot \rangle: \bm{D} \times \bm{D} \to \co_E \otimes  W
\]
satisfying 
\begin{equation}\label{hermite}
\lambda (x,y) = \mathrm{Tr}_{E/\Q_p}  \delta^{-1}\langle    x,y \rangle  ,
\end{equation}
which in turn induces a  Hermitian form  on every exterior power $\bigwedge^\ell_E \bm{D}$ by
\[
\langle  x_1\wedge \cdots \wedge x_\ell , y_1\wedge\cdots \wedge y_\ell \rangle
= \sum_{\pi \in S_\ell } \mathrm{sgn}(\pi)  \prod_{i=1}^\ell  \langle x_i, y_{\pi(i)} \rangle.
\]
This Hermitian form identifies each lattice $\bigwedge_E^\ell \bm{D}$ with its dual lattice in $(\bigwedge_E^\ell \bm{D})_\Q$.

In order to make explicit calculations, we now put coordinates on $\bm{D}_\Q$.

\begin{lemma}\label{lem:standard coords}
There are $W_\Q$-bases 
\begin{align*}
e_1, e_2,e_3 ,e_4 & \in \epsilon_0\bm{D}_\Q \\
f_1, f_2,f_3,f_4  &  \in \epsilon_1\bm{D}_\Q
\end{align*} 
such that
\begin{equation}\label{standard Hermitian}
\langle e_i,f_j\rangle = \begin{cases}
\epsilon_0 &\mbox{if $i=j$} \\
0 & \mbox{otherwise,}
\end{cases}
\end{equation}
and the $\sigma$-semi-linear operator $F$ satisfies
\[
\begin{array}{cccc}
Fe_1=f_1, & Fe_2=f_2, &  Fe_3=pf_3, &  Fe_4=pf_4 \\
Ff_1=pe_1, & Ff_2=pe_2, &  Ff_3=e_3, &  Ff_4=e_4 .
\end{array}
\]
\end{lemma}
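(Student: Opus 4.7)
The plan is to exploit the supersingularity of $\bm{G}$ to descend $\bm{D}_\Q$ to a $\Q_{p^2}$-rational structure, classify the resulting Hermitian form on that rational structure, and then choose a basis matching the required conditions.

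First I would observe that $F$ interchanges $\epsilon_0\bm{D}_\Q$ and $\epsilon_1\bm{D}_\Q$ (since $\sigma$ exchanges $\psi_0$ and $\psi_1$, hence $\epsilon_0$ and $\epsilon_1$), so the operator $\tau := p^{-1}F^2$ is $\sigma^2$-semi-linear and preserves each summand. Supersingularity of $\bm{G}$ implies all slopes of $F$ equal $1/2$, whence $\tau$ is isoclinic of slope zero. By Dieudonn\'e--Manin theory over the algebraically closed field $k$, the space $N := \bm{D}_\Q^{\tau=1}$ is an $8$-dimensional $\Q_{p^2}$-vector space (where $\Q_{p^2} := W_\Q^{\sigma^2=1}$) with $\bm{D}_\Q = W_\Q \otimes_{\Q_{p^2}} N$. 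The $\co_E$-action commutes with $F$, so splits $N = \epsilon_0 N \oplus \epsilon_1 N$ with each summand of $\Q_{p^2}$-dimension $4$, and on $N$ we have $F^2 = p$ with $F$ restricting to a Galois-semi-linear isomorphism $\epsilon_0 N \to \epsilon_1 N$.

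Next I would descend the Hermitian form. Substituting $y \mapsto Fy$ in $\lambda(Fx, y) = \sigma\lambda(x, Vy)$ and using $VF = p$ yields $\lambda(Fx, Fy) = p\sigma\lambda(x, y)$, which via (\ref{hermite}) translates to $\langle\tau x, \tau y\rangle = \sigma^2\langle x, y\rangle$. Hence $\langle\cdot,\cdot\rangle$ restricts to a nondegenerate pairing $\epsilon_0 N \times \epsilon_1 N \to \Q_{p^2}\epsilon_0$, which via $F$ defines a nondegenerate $\Q_{p^2}/\Q_p$-Hermitian form on the $4$-dimensional space $\epsilon_0 N$. The self-duality of the lattice $\bm{D}$ (equivalent to $\lambda$ being principal) forces this form to have trivial discriminant modulo $\mathrm{Nm}(\Q_{p^2}^\times)$, and by the classification of Hermitian forms over $\Q_{p^2}/\Q_p$ I can choose a basis $e_1, \ldots, e_4$ of $\epsilon_0 N$ in which $\langle e_i, Fe_j\rangle$ equals $\epsilon_0$ for $i = j \in \{1, 2\}$, $p\epsilon_0$ for $i = j \in \{3, 4\}$, and zero otherwise; this diagonal matrix has discriminant $p^2 = \mathrm{Nm}(p)$ and so lies in the trivial class.

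To conclude, set $f_i := Fe_i$ for $i = 1, 2$ and $f_i := p^{-1}Fe_i$ for $i = 3, 4$; these lie in $\epsilon_1 N \subset \epsilon_1\bm{D}_\Q$. The identity $F^2 = p$ on $N$ gives the stated $F$-action at once, and the chosen normalization of $\langle e_i, Fe_j\rangle$ yields $\langle e_i, f_j\rangle = \epsilon_0\delta_{ij}$ in (\ref{standard Hermitian}). The main technical obstacle is the middle step, namely verifying that the Hermitian form descended to $\epsilon_0 N$ admits the specific diagonalization $\mathrm{diag}(1, 1, p, p)$. This rests on the classification of Hermitian forms over unramified $\Q_{p^2}/\Q_p$ together with the use of the self-dual lattice $\bm{D}$ to pin down the discriminant class; the remaining steps are direct linear-algebra bookkeeping.
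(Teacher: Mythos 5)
Your overall strategy is a genuinely different and attractive route: rather than the paper's Noether--Skolem argument on the endomorphism algebra of the isocrystal followed by an explicit adjustment of the similitude factor, you descend $\bm{D}_\Q$ to a $\Q_{p^2}$-rational $F$-module $N$ via the slope-zero $\sigma^2$-isocrystal $(\bm{D}_\Q,\tau)$ with $\tau=p^{-1}F^2$, transfer the polarization to a $\Q_{p^2}/\Q_p$-Hermitian form $h(x,y)=\langle x, Fy\rangle$ on the $4$-dimensional space $\epsilon_0 N$, and then invoke the classification of Hermitian forms over an unramified quadratic extension. That cleanly reduces the lemma to a single invariant, the discriminant class of $h$ in $\Q_p^\times/\mathrm{Nm}(\Q_{p^2}^\times)$. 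The descent itself, the sesquilinearity of $h$, and the final bookkeeping are all fine.

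The gap is precisely at the point you flag as ``the main technical obstacle,'' and your proposed resolution of it is not correct as stated. You claim that the self-duality of $\bm{D}$ (i.e.\ the principality of $\lambda$) forces $h$ to have trivial discriminant. But $\bm{D}$ is a $W$-lattice, and since $\tau=p^{-1}F^2$ need not stabilize $\bm{D}$, there is no canonical self-dual $\Z_{p^2}$-lattice in $\epsilon_0 N$ produced from $\bm{D}$; the self-duality of $\bm{D}$ does not transfer. Indeed self-duality alone cannot suffice: one computes (for instance by tracking the top exterior power $\bigwedge^4\epsilon_i\bm{D}$) that for any self-dual $W$-lattice,
\[
\mathrm{ord}_p(\det h)\equiv \mathrm{length}_W\bigl(\epsilon_1\bm{D}/F\epsilon_0\bm{D}\bigr)=\dim_k \epsilon_1\Lie(\bm{G})\pmod 2,
\]
and this parity is well-defined on the isogeny class of $(\bm{D},F,\langle\cdot,\cdot\rangle,\iota)$. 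For a supersingular principally polarized $\co_E$-Dieudonn\'e module of signature $(1,3)$ this parity is odd, so $h$ would be the non-split Hermitian space; for signature $(2,2)$ it is even, giving the split form. Thus the signature $(2,2)$ condition (\ref{kottwitz}) is what forces the required diagonalization $\mathrm{diag}(1,1,p,p)$, not the self-duality of $\bm{D}$, and your proof needs to invoke it (or, equivalently, cite the computation that $J$ is the unitary similitude group of the split Hermitian space, as the paper does via Vollaard's Remark 1.16). Once that ingredient is added your argument closes; without it, the assertion about the discriminant class is simply false.
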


\begin{proof}
Denote by $\bm{D}'_\Q$ the isocrystal with $W_\Q$-basis $\{e_1, \ldots, e_4,f_1, \ldots, f_4\}$ and $F$ operator
defined by the above relations.  Endow $\bm{D}'_\Q$ with the $E$-action $\bm{\iota}'(\alpha)e_i = \psi_0(\alpha)e_i$ and
$\bm{\iota}'(\alpha)f_i = \psi_1(\alpha) f_i$, and the unique Hermitian form satisfying (\ref{standard Hermitian}).  This 
Hermitian form determines a polarization $\bm{\lambda}'(x,y) = \mathrm{Tr}_{E/\Q_p} \delta^{-1}\langle x,y\rangle$.
As $\bm{D}'_\Q$ is isoclinic of slope $1/2$, there is an isomorphism of isocrystals 
\[
\varrho:\bm{D}_\Q\iso \bm{D}'_\Q.
\]
Any two embeddings of  $E$ into  $\End( \bm{D}'_\Q )$ are conjugate, by the Noether-Skolem theorem, and so $\varrho$ 
may be modified to make it $E$-linear. Another application of Noether-Skolem shows that $\varrho$ may be further modified 
to ensure that the polarizations on $\bm{D}_\Q$ and $\bm{D}'_\Q$  induce the same Rosati involution on 
\[  
\End( \bm{D}_\Q ) \iso \End( \bm{D}'_\Q ). 
\]  
This implies that $\varrho$  identifies the polarizations, and hence the  Hermitian forms, on $\bm{D}_\Q$ and $\bm{D}'_\Q$ 
up to scaling by an element $c(\varrho)\in \Q_p^\times$. 

Finally, for every $c\in \Q_p^\times$ one can find an $E$-linear isocrystal automorphism $g$ of 
$\bm{D}'_\Q$  such that  $g$  rescales the polarization of $\bm{D}'_\Q$ by the factor  $c$.  
For example, if $\ord_p(c)$ is even then write $c=\alpha\overline{\alpha}$
with $\alpha\in E^\times$ and take $g=\bm{\iota}'(\alpha)$.  If $c=p$ then take $g$ to be
\[
\begin{array}{cccc}
e_1\mapsto e_3 , & e_2 \mapsto e_4, &  e_3\mapsto pe_1, &  e_4\mapsto pe_2 \\
f_1\mapsto pf_3, & f_2 \mapsto pf_4, &  f_3 \mapsto f_1 , &  f_4 \mapsto f_2 .
\end{array}
\]
Thus $\varrho$ may be further modified to ensure that $c(\varrho)=1$.
\end{proof}

\begin{lemma}\label{lem:omega}
There is an $\co_E\otimes W$-module generator  $\bm{\omega}\in \bigwedge_{E}^4 \bm{D}$ such 
 that $\langle \bm{\omega},\bm{\omega} \rangle=1$, and $F\bm{\omega} = p^2\bm{\omega}$.  
  If $\bm{\omega} ' \in \bigwedge_{E}^4 \bm{D}$ is another such element, there is an $\alpha\in \co_E^\times$ such that
$\alpha \overline{\alpha}=1$ and $\bm{\omega}'=\alpha \bm{\omega}$.
\end{lemma}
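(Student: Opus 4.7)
My plan is to construct $\bm{\omega}$ concretely using the basis of Lemma~\ref{lem:standard coords}, then handle integrality via an abstract existence argument. Setting $\bm{\omega} := e_1\wedge e_2\wedge e_3\wedge e_4 + f_1\wedge f_2\wedge f_3\wedge f_4 \in \bigwedge^{4}_{E}\bm{D}_\Q$, the relation $F\bm{\omega}=p^2\bm{\omega}$ follows from the Frobenius formulas in the lemma, which produce the factor $p^2$ by collecting the two factors of $p$ from $Fe_3, Fe_4$ (resp.\ from $Ff_1, Ff_2$). For $\langle\bm{\omega},\bm{\omega}\rangle=1$, the sesqui-linearity of the Hermitian form combined with $\epsilon_0\overline{\epsilon_0}=\epsilon_0\epsilon_1=0$ kills the ``same-type'' contributions, and the determinant formula on exterior powers reduces the two cross-terms to $\det(\epsilon_0 I_4) + \det(\epsilon_1 I_4)=\epsilon_0+\epsilon_1=1$.

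The main difficulty is to show that $\bm{\omega}$ is integral and generates $\bigwedge^4_E \bm{D}$ as a $\co_E\otimes W$-module, since the basis of Lemma~\ref{lem:standard coords} is only a $W_\Q$-basis of $\bm{D}_\Q$. To handle this I would first prove abstractly that some integral generator $\bm{\omega}'$ with the same properties exists. Starting from any $\co_E\otimes W$-generator $\bm{u}$ of $\bigwedge^4_E\bm{D}$, the signature $(2,2)$ condition forces $F\bm{u}=c\bm{u}$ with $c$ of valuation $(2,2)$ in the $(W,W)$-coordinates afforded by $\epsilon_0,\epsilon_1$; a Hilbert~90 argument for $\sigma^2$ acting on $W^\times$ then produces $\alpha\in (\co_E\otimes W)^\times$ with $F(\alpha\bm{u})=p^2(\alpha\bm{u})$. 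The polarization identity $\lambda(Fx,y)=\lambda(x,Vy)^\sigma$ upgrades to $\langle Fx,Fy\rangle=p^4\sigma(\langle x,y\rangle)$ on the exterior power, which combined with $F(\alpha\bm{u})=p^2(\alpha\bm{u})$ forces $\langle\alpha\bm{u},\alpha\bm{u}\rangle$ to be both Hermitian-self-conjugate and $\sigma$-invariant, hence an element of $\Z_p^\times$; a final rescaling by $\beta\in\co_E^\times$ with $N_{E/\Q_p}(\beta)$ equal to the inverse of this value, possible by surjectivity of the norm in the unramified case, yields the integral generator $\bm{\omega}'$.

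Uniqueness is then a clean calculation: if $\bm{\omega}^{\prime\prime}=\alpha\bm{\omega}'$ also satisfies both properties, then $F\bm{\omega}^{\prime\prime}=p^2\bm{\omega}^{\prime\prime}$ combined with the $\sigma$-semi-linearity of $F$ forces $\sigma(\alpha)=\alpha$, i.e.\ $\alpha\in(\co_E\otimes W)^\sigma=\co_E$, and $\langle\bm{\omega}^{\prime\prime},\bm{\omega}^{\prime\prime}\rangle=1$ forces $\alpha\bar\alpha=1$. Applied to the pair $(\bm{\omega},\bm{\omega}')$, this shows that $\bm{\omega}=\alpha\bm{\omega}'$ for some $\alpha\in\co_E^\times$ of norm $1$, so $\bm{\omega}\in\bigwedge^4_E\bm{D}$ is itself a generator. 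The hardest technical step is verifying that $\langle\alpha\bm{u},\alpha\bm{u}\rangle$ lies in $\Z_p^\times$, which requires combining the $F$-compatibility of the polarization with the determinant formula on $\bigwedge^4$ and invoking Hilbert~90 to solve the intermediate $\sigma^2$-semi-linear equation.
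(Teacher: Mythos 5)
Your proof is correct, but it takes a genuinely different route from the paper. Both you and the paper use the explicit candidate $\bm{\omega}=e_1\wedge e_2\wedge e_3\wedge e_4+f_1\wedge f_2\wedge f_3\wedge f_4$ and observe directly that $F\bm{\omega}=p^2\bm{\omega}$ and $\langle\bm{\omega},\bm{\omega}\rangle=1$. The two proofs diverge on the integrality question. The paper attacks it head on: it writes $\bigwedge^4\epsilon_0\bm{D}=W\cdot p^{k_0}e_1\wedge\cdots\wedge e_4$ and $\bigwedge^4\epsilon_1\bm{D}=W\cdot p^{k_1}f_1\wedge\cdots\wedge f_4$, uses self-duality of $\bigwedge^4_E\bm{D}$ under the Hermitian form to get $k_0+k_1=0$, uses the signature $(2,2)$ condition to show the cokernels of $V$ on $\bigwedge^4$ each have length $2$, deduces $k_0=k_1$, and concludes $k_0=k_1=0$ so that the candidate $\bm{\omega}$ literally generates. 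You instead run an abstract existence argument (rescale an arbitrary generator $\bm{u}$ by Lang's theorem and norm surjectivity to produce some integral generator $\bm{\omega}'$ with the required properties) and then use the uniqueness statement to conclude the explicit $\bm{\omega}$ differs from $\bm{\omega}'$ by a norm-one unit of $\co_E$, hence is also integral. This works, but it is heavier: what you call ``Hilbert~90 for $\sigma^2$ acting on $W^\times$'' is really Lang's theorem for the infinite extension $W/W^{\sigma^2}$ (which holds because $k$ is algebraically closed), and you also invoke surjectivity of $N_{E/\Q_p}:\co_E^\times\to\Z_p^\times$. You also compress the step where the signature $(2,2)$ condition forces the $F$-eigenvalue $c$ to have valuation $(2,2)$ in both $\epsilon$-coordinates: this is exactly the cokernel-of-$V$-has-length-$2$ computation the paper does explicitly, and is where the nontrivial content lives, so in a full write-up you would need to supply it. Also, a small point: Hermitian-self-conjugacy plus $\sigma$-invariance only place $\langle\alpha\bm{u},\alpha\bm{u}\rangle$ in $\Z_p$; the fact that it is a unit comes from the self-duality of $\bigwedge^4_E\bm{D}$ under the Hermitian pairing (noted by the paper just before the lemma). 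The paper's approach buys elementarity; yours replaces the concrete lattice-index bookkeeping with the softer tools of Lang's theorem and norm surjectivity, and has the mild conceptual advantage that it explains why ``uniqueness'' is the mechanism that propagates integrality to the explicit $\bm{\omega}$.
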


\begin{proof}
The $W$-module decomposition $\bm{D}=\epsilon_0\bm{D} \oplus \epsilon_1\bm{D}$ induces  a corresponding  decomposition 
$
\bigwedge^4_E \bm{D} = \bigwedge^4 \epsilon_0\bm{D} \oplus \bigwedge^4 \epsilon_1\bm{D}.
$
Fixing a basis as in Lemma \ref{lem:standard coords}, we must have
\begin{align*}
\bigwedge\nolimits^4 \epsilon_0 \bm{D} &=W \cdot p^{k_0} e_1\wedge e_2\wedge e_3\wedge e_4 \\
\bigwedge\nolimits^4 \epsilon_1 \bm{D} &= W\cdot p^{k_1} f_1\wedge f_2 \wedge f_3 \wedge f_4 
\end{align*}
for some integers $k_0$ and $k_1$.   
 The self-duality of $\bigwedge^4_E \bm{D}$ under $\langle\cdot,\cdot\rangle$  implies $k_0+k_1=0$.
The signature $(2,2)$ condition on 
\[
\Lie(\bm{G}) \iso \bm{D}/V\bm{D} =  \epsilon_0\bm{D}/V \epsilon_1\bm{D}  \oplus \epsilon_1 \bm{D}_1/V \epsilon_0\bm{D} 
\]
implies that each of the summands on the right has dimension $2$ over $W/pW$, and hence the 
cokernels of 
\[
V: \bigwedge\nolimits^4 \epsilon_0\bm{D} \to \bigwedge\nolimits^4  \epsilon_1\bm{D}, \qquad
V: \bigwedge\nolimits^4 \epsilon_1\bm{D} \to \bigwedge\nolimits^4 \epsilon_0\bm{D}
\]
are each of length $2$ as $W$-modules.  Using 
\begin{align*}
V(e_1\wedge e_2\wedge e_3\wedge e_4) & = p^2 f_1\wedge f_2 \wedge f_3 \wedge f_4 \\
V( f_1\wedge f_2 \wedge f_3 \wedge f_4 ) & = p^2 e_1\wedge e_2\wedge e_3\wedge e_4
\end{align*}
we deduce that $k_1$ and $k_2$ are equal, and hence both are equal to $0$.  It follows that
\begin{equation}\label{standard omega}
\bm{\omega} = e_1\wedge e_2\wedge e_3\wedge e_4 + f_1\wedge f_2 \wedge f_3 \wedge f_4 
\end{equation}
generates  $\bigwedge_{E}^4 \bm{D}$ as an $\co_E\otimes W$-module.  A simple calculation shows that 
 $\langle \bm{\omega},\bm{\omega} \rangle=1$ and $F\bm{\omega} = p^2\bm{\omega}$,
proving the existence part of the lemma.  The uniqueness part of the claim is obvious.
\end{proof}

\begin{definition}
For any  $\bm{\omega}$ as in the lemma,  define the  \emph{Hodge star operator}  
$x\mapsto x^\star$ on $\bigwedge^2_E \bm{D}$ by the relation 
$
y\wedge  x^\star = \langle y,x\rangle   \cdot  \bm{\omega}
$  
for all $y\in \bigwedge^2_E \bm{D}$.     
\end{definition}

The Hodge operator satisfies $(\alpha x)^\star = \overline{\alpha}x^\star$
for all $\alpha\in \co_E\otimes W$.  Denote by 
\[
\bm{L} = \{ x \in   \bigwedge\nolimits_E ^2 \bm{D} : x^\star=x \}
\]
the $W$-submodule of Hodge fixed vectors. 
The  Hermitian form $\langle\cdot,\cdot\rangle$  on $\bm{D}$ determines an injection
$\bigwedge^2_E \bm{D} \to \End_W( \bm{D} )$ 
by  
\[
( a\wedge b) (z) = \langle a,z\rangle b - \langle b,z\rangle a,
\]
and we obtain inclusions
$
\bm{L} \subset \bigwedge^2_E \bm{D} \subset \End_W(\bm{D}).
$
Note that both the Hodge star operator and  the submodule $\bm{L}$  depend on the choice of $\bm{\omega}$.

\begin{proposition}\label{prop:basic hodge}
For any choice of $\bm{\omega}$, the induced Hodge star operator has the following properties.
\begin{enumerate}
\item
Every  $x\in \bigwedge^2_E\bm{D}$ satisfies $(x^\star)^\star=x$.
 \item
Every $x\in \bm{L}$, viewed as an endomorphism of $\bm{D}$, satisfies 
\begin{equation}\label{quadratic calc}
x\circ x = -  \frac{\langle x,x\rangle }{2}  .
\end{equation}
In particular,  $Q(x)= x\circ x$ defines a $W$-valued quadratic form on $\bm{L}$.
\item
The $W$-quadratic space $\bm{L}$ is self-dual of rank $6$, and 
\[
\bm{L} = \{ x\in \bm{L}_\Q  :  x \bm{D}\subset\bm{D} \}. 
\]
\item
If  $C(\bm{L})$ denotes the Clifford algebra of $\bm{L}$, the natural map
\[
C(\bm{L}) \to \End_W(\bm{D})
\]
induced by the inclusion $\bm{L} \subset \End_W(\bm{D})$ is an isomorphism.
Under this isomorphism, the even Clifford algebra is identified with the subalgebra of 
$\co_E$-linear endomorphisms in $\End_W(\bm{D})$.
 \end{enumerate}
\end{proposition}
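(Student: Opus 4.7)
The plan is to verify all four claims by reducing to explicit coordinates. Fix a basis as in Lemma \ref{lem:standard coords} and take $\bm{\omega}$ to be the standard generator (\ref{standard omega}); by Lemma \ref{lem:omega} any other choice of $\bm{\omega}$ differs only by a norm-one element of $\co_E^\times$, which does not affect any of (1)--(4). The decisive structural observation is that $\epsilon_0\epsilon_1=0$ forces the decomposition
\[
\bigwedge\nolimits^2_E \bm{D} \;=\; \bigwedge\nolimits^2_W(\epsilon_0\bm{D}) \;\oplus\; \bigwedge\nolimits^2_W(\epsilon_1\bm{D})
\]
into free $W$-modules of rank $6$, while the identity $(\alpha x)^\star = \overline{\alpha}\, x^\star$ shows that $\star$ exchanges the two summands. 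Consequently every $x\in \bm{L}$ has a unique decomposition $x = x_0 + x_0^\star$ with $x_0 \in \bigwedge^2_W(\epsilon_0\bm{D})$, which is the workhorse for the remaining claims.

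For (1), I compute $(e_i\wedge e_j)^\star$ from the defining relation by pairing against the basis $\{f_k\wedge f_l\}$ of the opposite summand: the answer is $(e_i\wedge e_j)^\star = \pm f_m\wedge f_n$ where $\{m,n\} = \{1,2,3,4\}\setminus\{i,j\}$, with sign determined by the orientation of $\bm{\omega}$, and the analogous computation on the $f$-basis then gives $(x^\star)^\star = x$. For (2), the formula $(a\wedge b)(z) = \langle a,z\rangle b - \langle b,z\rangle a$ shows that $x_0$ kills $\epsilon_0\bm{D}$ and sends $\epsilon_1\bm{D}$ into $\epsilon_0\bm{D}$, while $x_0^\star$ does the opposite. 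Thus $x_0^2 = (x_0^\star)^2 = 0$ and
\[
x\circ x \;=\; x_0\, x_0^\star + x_0^\star\, x_0,
\]
and a direct computation confirms that each summand acts as the scalar $-\langle x,x\rangle/2$ on its respective domain, yielding (\ref{quadratic calc}).

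For (3), the bijection $x\leftrightarrow x_0$ makes $\bm{L}$ a free $W$-module of rank $6$, and the bilinear form associated to $Q$ corresponds, under this bijection, to a $W$-valued pairing built from the (unimodular) Hermitian form on $\bm{D}$; this yields self-duality, which one may alternatively verify by writing down the explicit Gram matrix in the basis $\{e_i\wedge e_j\}_{i<j}$. For the integrality statement, the embedding $\bigwedge^2_E\bm{D}\hookrightarrow \End_W(\bm{D})$ identifies $\bigwedge^2_W(\epsilon_0 \bm{D})$ with the skew-symmetric subspace of $\Hom_W(\epsilon_1\bm{D},\epsilon_0\bm{D})$, which is a $W$-direct summand; since $p>2$, the projector $\tfrac{1}{2}(1+\star)$ then exhibits $\bm{L}$ itself as a direct summand of $\End_W(\bm{D})$, forcing $\bm{L} = \bm{L}_\Q\cap \End_W(\bm{D})$.

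For (4), the quadratic identity and the universal property of the Clifford algebra produce a $W$-algebra map $C(\bm{L})\to \End_W(\bm{D})$ between free $W$-modules of rank $2^6 = 64 = 8^2$. On the generic fiber $\bm{L}_\Q$ is a non-degenerate $6$-dimensional quadratic space over $W_\Q$, so $C(\bm{L}_\Q)$ is central simple and the map is a non-zero algebra map between simple algebras of equal dimension, hence an isomorphism; the reduction $\bm{L}/p\bm{L}$ remains non-degenerate by self-duality, so the same argument gives an isomorphism over $k$, and Nakayama promotes this to an integral isomorphism. For the even part, each $x\in \bm{L}$ exchanges $\epsilon_0\bm{D}$ and $\epsilon_1\bm{D}$, equivalently satisfies $\iota(\alpha)\circ x = x\circ \iota(\overline{\alpha})$ for all $\alpha\in\co_E$, so any product of two elements of $\bm{L}$ is $\co_E$-linear and $C^+(\bm{L})\subset \End_{\co_E\otimes W}(\bm{D})$; the rank match ($2^5 = 32 = 2\cdot 4^2$) forces equality. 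The main technical work is the scalar computation in (2); everything else is bookkeeping organized by the decomposition noted at the outset.
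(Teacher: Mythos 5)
Your proof is correct and follows essentially the same route as the paper: fix the standard basis and the standard $\bm{\omega}$, compute the Hodge star on pure tensors, and then run the Clifford-algebra argument. The decomposition $\bigwedge^2_E\bm{D}=\bigwedge^2_W(\epsilon_0\bm{D})\oplus\bigwedge^2_W(\epsilon_1\bm{D})$ with $\star$ swapping the factors, which you take as your organizing principle, is the same one the paper uses (implicitly for the ``pure tensors of the form $e_i\wedge e_j$ and $f_i\wedge f_j$'' computation, explicitly for the rank-$6$ count). Two local differences are worth flagging. For part (2), the paper proves the stronger polarized identity $x\circ y + y^\star\circ x^\star = -\langle x,y\rangle$ by checking it on all pure tensors, whereas you first observe $x_0^2=(x_0^\star)^2=0$ so that $x\circ x = x_0x_0^\star + x_0^\star x_0$, and only then compute the two scalars; this is a tidier organization of the same brute-force check. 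For the integrality statement in part (3), the paper's argument is a duality sandwich, $(\bm{L}')^\vee \subset \bm{L}^\vee = \bm{L} \subset \bm{L}' \subset (\bm{L}')^\vee$, exploiting that $Q$ is $W$-valued on $\bm{L}'$, while you argue that $\bigwedge^2_E\bm{D}$ sits inside $\End_W(\bm{D})$ as a $W$-direct summand (via the identification of $\bigwedge^2_W(\epsilon_i\bm{D})$ with the skew-symmetric part of $\Hom_W(\epsilon_{1-i}\bm{D},\epsilon_i\bm{D})$, using $p>2$), and that the projector $\tfrac{1}{2}(1+\star)$ then exhibits $\bm{L}$ as saturated. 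Both are correct; the saturation argument is perhaps more conceptual but does require you to also record that $\bigwedge^2_W(\epsilon_1\bm{D})$ is a direct summand on the other side, so that the domain of the projector is itself a summand of $\End_W(\bm{D})$. Parts (1) and (4) match the paper's proof.
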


\begin{proof}
Fix a basis of $\bm{D}_\Q$ as in Lemma \ref{lem:standard coords}, and suppose first that $\bm{\omega}$ is 
given by (\ref{standard omega}). An easy calculation shows that 
\begin{equation}\label{basis hodge}
\begin{array}{lr}
(e_1 \wedge e_2)^\star =  f_3\wedge f_4   &\quad   (f_3 \wedge f_4)^\star =  e_1\wedge e_2  \\
(e_1 \wedge e_3)^\star =  f_4 \wedge f_2  & \quad(f_4 \wedge f_2)^\star =  e_1\wedge e_3 \\
(e_1 \wedge e_4)^\star =  f_2\wedge f_3  &\quad (f_2 \wedge f_3)^\star =  e_1\wedge e_4  \\
(e_2 \wedge e_3)^\star =  f_1\wedge f_4  &\quad (f_1 \wedge f_4)^\star =  e_2\wedge e_3  \\
(e_2 \wedge e_4)^\star =  f_3\wedge f_1  &\quad (f_3 \wedge f_1)^\star =  e_2\wedge e_4 \\
(e_3 \wedge e_4)^\star =  f_1\wedge f_2  &\quad (f_1 \wedge f_2)^\star =  e_3\wedge e_4 
\end{array}
\end{equation}
 from which $(x^\star)^\star =x$ is obvious.  Now set $\bm{\omega}' = \alpha \bm{\omega}$ with $\alpha\in \co_E^\times$
 of norm $1$, and denote by $x\mapsto x^{\star\prime}$ the Hodge star operator defined by $\bm{\omega}'$.   It is 
 related to the Hodge star operator for $\bm{\omega}$ by $x^{\star\prime} = \alpha x^\star$, and hence
 \[
 (x^{\star\prime})^{\star\prime} = ( \alpha (\alpha x^\star) )^\star = \alpha \overline\alpha (x^\star)^\star = x.
 \]
 This proves the first claim in full generality.
 
 Keep $\bm{\omega}$ as in (\ref{standard omega}).
 For the second claim, one first checks that all $x,y \in \bigwedge^2_E \bm{D}$ satisfy the relation
 \begin{equation}\label{scalar relation}
 x \circ  y + y^\star  \circ x^\star =  - \langle x,y \rangle  
\end{equation}
in $\End_W(\bm{D})$.  Indeed, it suffices to prove this when $x$ and $y$ are pure tensors of the form
$e_i\wedge e_j$ and $f_i\wedge f_j$, and this can be done by brute force.  
Of course (\ref{scalar relation}) immediately implies  (\ref{quadratic calc})
for all $x\in \bm{L}$, proving the second claim for  $\bm{\omega}$.  The
validity of (\ref{scalar relation})  for any other $\bm{\omega}'$  follows by the reasoning
of the previous paragraph.

For the third claim, note that the quadratic form $Q(x)= - \langle x,x\rangle /2$ on $\bm{L}$ extends to a 
quadratic form on $\bigwedge^2_E\bm{D}$ by the same formula (using the standing hypothesis
that $p$ is odd), with associated bilinear form
\[
[x,y] = - \frac{ 1}{2} \cdot \mathrm{Tr}_{E/\Q_p} \langle x,y\rangle ,
\]
and that there is an orthogonal decomposition
\[
\bigwedge\nolimits^2_E\bm{D} = \bm{L} \oplus \{ x\in \bigwedge\nolimits^2_E\bm{D} : x^\star=-x \}.
\]
 The self-duality of $\bigwedge^2_E\bm{D}$ under $\langle \cdot , \cdot \rangle $ 
implies its self-duality under $[\cdot,\cdot]$, which then implies the self-duality of the orthogonal summand $\bm{L}$.  
The Hodge star operator acts on the  $W$-module 
\[
\bigwedge\nolimits^2_E\bm{D}  = \bigwedge\nolimits^2\epsilon_0\bm{D} \oplus \bigwedge\nolimits^2\epsilon_1\bm{D}
\]
of rank $12$  by interchanging the two summands on the right, and hence its submodule of fixed points, 
$\bm{L}$, has rank $6$.  Finally, set $\bm{L}'=  \{ x\in \bm{L}_\Q  :  x \bm{D}\subset\bm{D} \}$.
Certainly $\bm{L} \subset \bm{L}'$, and the 
quadratic form $Q(x) =x\circ x$ restricted to $\bm{L}'$ 
takes values in $W=W_\Q\cap \End_W(\bm{D})$.  Therefore
$
(\bm{L}')^\vee \subset \bm{L}^\vee = \bm{L} \subset \bm{L}' \subset (\bm{L}')^\vee,
$
and so equality holds throughout.

For the fourth claim, the self-duality of $\bm{L}$ implies that $\bm{L}/p\bm{L}$ is the unique  nondegenerate $k$-quadratic space
of dimension $6$, and so its Clifford algebra is isomorphic to $M_8(k)$.  This means that the induced map
\[
C(\bm{L}/p\bm{L}) \iso C(\bm{L})\otimes_W k   \to \End_W(\bm{D})\otimes_W k
\]
is a homomorphism between central simple $k$-algebras of the same dimension, and hence is an isomorphism.
It  now follows from Nakayama's lemma that $C(\bm{L}) \to \End_W(\bm{D})$ is an isomorphism.
Every $x\in \bm{L}$ satisfies $x\circ \bm{\iota}(\alpha) = \bm{\iota}(\overline{\alpha}) \circ x$, and hence
the composition of any two elements of $\bm{L}$ is $\co_E$-linear.  This implies that the even
Clifford algebra  is contained in $\End_{\co_E\otimes W}(\bm{D})$, and equality holds
because both are $W$-module direct summands of $C(\bm{L}) \iso \End_W(\bm{D})$ of the same rank.
\end{proof}

The operator  
\[
\Phi (a\wedge b)= p^{-1} (Fa)\wedge(Fb)
\] 
makes $\bigwedge^2_E \bm{D}_\Q$ into a slope $0$ isocrystal.  In terms of the inclusion 
$\bigwedge^2_E\bm{D}_\Q \subset \End_W(\bm{D})_\Q$, this operator is just
\[
\Phi (a\wedge b) = F \circ (a\wedge b) \circ F^{-1}.
\]
  As $\Phi$ commutes with the Hodge star operator, it stabilizes the subspace $\bm{L}_\Q$
and makes $\bm{L}_\Q$ into a slope $0$ isocrystal. In this way we obtain inclusions of $\Q_p$-vector spaces
 \begin{equation}\label{special endo}
\bm{L}_\Q^\Phi\subset (\bigwedge\nolimits^2_E \bm{D}_\Q)^\Phi \subset \End( \bm{G} )_\Q,
 \end{equation}
where the $\Phi$ superscripts denote the subspaces of $\Phi$-fixed vectors.  
Endow   $\bm{L}_\Q^\Phi$ with the quadratic  form   $ Q(x) = x\circ x $, and the associated bilinear form
\[
[x,y]=  x\circ y + y\circ x = - \frac{1}{2} \cdot \mathrm{Tr}_{E/\Q_p} \langle x,y\rangle .
\]

\begin{remark}
The $6$-dimensional $E$-vector space $(\bigwedge^2_E \bm{D}_\Q)^\Phi$  is characterized as the space of  all
 Rosati-fixed $x\in \End( \bm{G} )_\Q$ satisfying $x\circ \bm{\iota}(\alpha) = \bm{\iota}(\overline{\alpha}) \circ x$ 
 for all $\alpha\in E$.  On the other hand,  the $6$-dimensional $\Q_p$-vector space $\bm{L}_\Q^\Phi$ depends on the choice of $\bm{\omega}$, 
 and so does not have a similar interpretation in terms of $\bm{\lambda}$ and $\bm{\iota}$  alone.   
\end{remark}
 
 While the \emph{subspace} $\bm{L}_\Q^\Phi\subset  \End( \bm{G} )_\Q$ depends on the choice of $\bm{\omega}$,
the following proposition shows that its isomorphism class as a quadratic space does not.
Denote by $\mathbb{H}$ the hyperbolic $\Q_p$-quadratic space of dimension $2$.

\begin{proposition}\label{prop:hasse}
For any choice of $\bm{\omega}$, the quadratic space $\bm{L}_\Q^\Phi$  has  Hasse invariant $-1$ and determinant
$
\det(\bm{L}_\Q^\Phi)=-\Delta
$
for any nonsquare $\Delta\in \Z_p^\times$.   Furthermore,  the  special orthogonal
group $\SO(\bm{L}_\Q^\Phi)$  is quasi-split and splits over $\Q_{p^2}$, and the space $\bm{L}_\Q^\Phi$
with the rescaled quadratic form $p^{-1} Q$ is isomorphic to $\mathbb{H}^2\oplus \Q_{p^2}$, where $\Q_{p^2}$
is endowed with its norm form $x\mapsto  \mathrm{Norm}_{\Q_{p^2}/\Q_p}(x)$.
\end{proposition}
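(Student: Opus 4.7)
The strategy is to pick the standard $\bm{\omega}$ of (\ref{standard omega}) and block-diagonalize $\Phi$ on a convenient $W$-basis of $\bm{L}$ in order to read off $Q$ on $\bm{L}_\Q^\Phi$ explicitly. We first observe that a different choice $\bm{\omega}' = \alpha\bm{\omega}$ (with $\alpha\bar\alpha=1$) yields an isomorphic quadratic space: by Hilbert 90 there is $\beta\in E^\times$ with $\beta/\bar\beta=\alpha$, and multiplication by $\bm{\iota}(\beta)$ (which commutes with $\Phi$ since $\bm{\iota}(\beta)$ is $\Phi$-fixed) defines a linear isomorphism $\bm{L}_\Q^\Phi\to(\bm{L}')_\Q^\Phi$ rescaling $Q$ by $\mathrm{Nm}_{E/\Q_p}(\beta)$. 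All invariants in the statement are preserved under scaling by such a norm, so it suffices to treat the standard case.

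Using (\ref{basis hodge}), take as $W$-basis of $\bm{L}$ the vectors
\begin{align*}
v_1 &= e_1\wedge e_2 + f_3\wedge f_4, & v_6 &= e_3\wedge e_4 + f_1\wedge f_2, \\
v_2 &= e_1\wedge e_3 + f_4\wedge f_2, & v_5 &= e_2\wedge e_4 + f_3\wedge f_1, \\
v_3 &= e_1\wedge e_4 + f_2\wedge f_3, & v_4 &= e_2\wedge e_3 + f_1\wedge f_4.
\end{align*}
A direct computation from $\Phi(a\wedge b) = p^{-1}(Fa)\wedge(Fb)$ and the $F$-action of Lemma \ref{lem:standard coords} yields
\[
\Phi v_1 = p^{-1}v_6,\ \Phi v_6 = p v_1;\quad \Phi v_2 = -v_5,\ \Phi v_5 = -v_2;\quad \Phi v_3 = v_4,\ \Phi v_4 = v_3,
\]
so $\Phi$ is block-diagonal on the three pairs $\{v_1,v_6\}, \{v_2,v_5\}, \{v_3,v_4\}$. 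Fix $\delta\in\co_E^\times$ with $\delta^\sigma=-\delta$ and set $\Delta=\delta^2\in\Z_p^\times$, a nonsquare unit. Parametrizing the $\Phi$-fixed vectors in each block by $\Q_{p^2}$ gives a $\Q_p$-basis of dimension $2$ in each block, and using $\langle v_1,v_6\rangle=\langle v_3,v_4\rangle=1$ and $\langle v_2,v_5\rangle=-1$ from (\ref{standard Hermitian}) (so $[v_1,v_6]=[v_3,v_4]=-1$ and $[v_2,v_5]=1$), one obtains the orthogonal decomposition
\[
(\bm{L}_\Q^\Phi, Q) \;\cong\; \langle -p^{-1}, p^{-1}\Delta\rangle \oplus \langle -1,\Delta\rangle \oplus \langle -1,\Delta\rangle.
\]

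The discriminant is $-p^{-2}\Delta^3\sim -\Delta$, giving $\det(\bm{L}_\Q^\Phi)=-\Delta$. For the rescaling claim, apply $p^{-1}$: the last two summands combine to a 4-dimensional form of trivial discriminant whose Hasse invariant is $+1$, by a short Hilbert-symbol computation using that $(u,v)_p=1$ for units $u,v$ when $p$ is odd; hence this 4-dimensional piece is hyperbolic, isomorphic to $\mathbb{H}^2$. The remaining summand $p^{-1}\langle -p^{-1}, p^{-1}\Delta\rangle \cong \langle -1,\Delta\rangle$ is isometric to $\mathrm{Nm}_{\Q_{p^2}/\Q_p}$ (both are $2$-dimensional anisotropic forms of discriminant $-\Delta$ with trivial Hasse invariant, or equivalently $-1$ is a norm from $\Q_{p^2}$). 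This proves $p^{-1}Q\cong \mathbb{H}^2\oplus \Q_{p^2}$. The Witt index of $p^{-1}Q$, hence of $Q$, is $2$, which is the next-to-maximal value for a rank-$6$ form, so $\SO(\bm{L}_\Q^\Phi)$ is quasi-split; and $\mathrm{Nm}_{\Q_{p^2}/\Q_p}$ becomes hyperbolic over $\Q_{p^2}$ (since $\Delta$ becomes a square), so $\SO$ splits over $\Q_{p^2}$. Finally, the Hasse invariant of $Q$ follows from the scaling formula $\epsilon(p^{-1}Q) = (p,\Delta)\,\epsilon(Q)$ together with $\epsilon(p^{-1}Q)=1$ and $(p,\Delta)_p=-1$ (as $\Delta$ is a nonsquare unit), yielding $\epsilon(Q)=-1$.

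The main obstacle is the Hilbert-symbol bookkeeping needed to identify $p^{-1}(\langle -1,\Delta\rangle^{\oplus 2})$ as $\mathbb{H}^2$ and to match $\langle -1,\Delta\rangle$ with the norm form of $\Q_{p^2}/\Q_p$; once these identifications are in hand, the rest of the proof is direct linear algebra in the explicit basis.
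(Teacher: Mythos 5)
Your proof is correct and follows essentially the same route as the paper: choose the standard $\bm{\omega}$, use the explicit basis from (\ref{basis hodge}) on which $\Phi$ becomes block-diagonal, read off $Q$ on the $\Phi$-fixed vectors, and dispose of a general $\bm{\omega}'=\alpha\bm{\omega}$ via Hilbert~90 (the paper's $\eta x$ is your $\bm{\iota}(\beta)\circ x$, and the paper computes the Hasse invariant directly as $(-p,p\Delta)=-1$ rather than deducing it from the rescaling formula). One small caution: the phrase ``Hasse invariant is $+1$ by $(u,v)_p=1$ for units'' should really be applied to $\langle -1,\Delta\rangle^{\oplus 2}$ \emph{before} scaling by $p^{-1}$ (whose entries are units), after which one uses that a scalar multiple of a hyperbolic form is hyperbolic; the conclusion is unaffected.
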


\begin{proof}
First suppose that $\bm{\omega}$ is defined by (\ref{standard omega}).  In this case the relations
(\ref{basis hodge}) show that the vectors
\begin{align*}
x_1 &= e_1\wedge e_2 + f_3\wedge f_4 &
x_2 &= e_3\wedge e_4 + f_1\wedge f_2 \\
x_3 &= e_1\wedge e_3 + f_4\wedge f_2 &
x_4 &= e_4\wedge e_2 + f_1\wedge f_3 \\
x_5 &= e_1\wedge e_4 + f_2\wedge f_3 &
x_6 &= e_2\wedge e_3 + f_1\wedge f_4 
\end{align*}
form a basis of $\bm{L}_\Q$.  In this basis the operator $\Phi$ takes the block diagonal form
\[
\Phi = \left(
\begin{matrix}
0& p \\
p^{-1}&0 \\
& & 0& 1\\
& & 1& 0\\
& & & & 0& 1\\
& & & & 1 & 0
\end{matrix} \right)\circ \sigma,
\]
and the matrix of $Q$ is
\[
\big( [ x_i,x_j] \big) = 
\left( \begin{matrix}
0& -1 \\
-1&0 \\
& & 0& -1\\
& & -1& 0\\
& & & & 0& -1\\
& & & & -1 & 0
\end{matrix} \right) .
\]
Fix any nonsquare $\Delta\in \Z_p^\times$ and let  $u\in W^\times$ be a square root of $\Delta$.  The vectors
\begin{align*}
y_1 &= p x_1 + x_2 &
y_2 &= u(px_1 -   x_2 ) \\
y_3 & = x_3+x_4  &
y_4 &= u(x_3 -  x_4) \\
y_5 & = x_5 + x_6 &
y_6 & = u(x_5 - x_6)
\end{align*}
form an orthogonal basis of $\bm{L}_\Q^\Phi$ with
\begin{equation}\label{Q matrix}
\left( \frac{[ y_i,y_j]}{2}  \right) = 
\left( \begin{matrix}
-p \\
&   p\Delta \\
& & -1 \\
& & & \Delta \\
& & & & - 1\\
& & & & & \Delta
\end{matrix} \right) ,
\end{equation}
from which one easily computes the determinant $-\Delta$ and Hasse invariant  $(-p,p\Delta) = -1$
of $\bm{L}_\Q^\Phi$.   As a nondegenerate quadratic space over $\Q_p$ is determined by its rank, 
determinant, and Hasse invariant, the remaining claims are 
easily checked for this special choice of $\bm{\omega}$.

Now suppose  $\bm{\omega}'=\alpha\bm{\omega}$ for some  $\alpha\in \co_E^\times$ of 
norm $1$.  Hilbert's Theorem 90 implies that there is some $\eta\in \co_E^\times$ satisfying
$\eta \overline{\eta}^{-1} = \alpha$.  Denote by  $x\mapsto x^{\star\prime}$ the Hodge star operator 
defined by $\bm{\omega}'$, by $\bm{L}' \subset \bigwedge_E^2\bm{D}$ the submodule of Hodge fixed vectors,
and by $Q'$ the quadratic form $x\circ x$ on $\bm{L}'$.
Using the relation $x^{\star\prime}=\alpha x^\star$, it is easy to see that the function $x\mapsto \eta x$ defines an 
isomorphism of quadratic spaces
\[
(\bm{L}_\Q^\Phi, \eta\overline{\eta} Q) \iso (\bm{L}_\Q^{\prime\Phi} , Q').
\]
In particular, there is a basis of $\bm{L}_\Q^{\prime\Phi}$ such that the quadratic form $Q'$
is given by $\eta\overline{\eta}$ times the matrix of (\ref{Q matrix}).  The Hasse invariant 
and determinant (modulo squares) of the matrix in (\ref{Q matrix}) are unchanged if the matrix is multiplied by any element of $\Z_p^\times$,
and so $\bm{L}_\Q^{\prime\Phi}$ has the same determinant and Hasse invariant as $\bm{L}_\Q^\Phi$. 
\end{proof}

 From now on we fix, once and for all, any  $\bm{\omega}$ as in Lemma \ref{lem:omega}.


\subsection{An exceptional isomorphism}


Define the unitary similitude group
\[
\GU( \bm{D}_\Q)  = \left\{ g\in  \Aut_{E\otimes W}(\bm{D}_\Q) : g^*\bm{\lambda} =\nu(g)\bm{\lambda} \mbox{ for some } \nu(g)\in W_\Q^\times \right\},
\]  
and set 
\[
\GU^0( \bm{D}_\Q ) =\left\{ g\in \GU( \bm{D}_\Q) : \nu(g)^2=\det(g) \right\}.
\]
The action $\action$ of  $\GU( \bm{D}_\Q) $  on $\End_W(\bm{D})_\Q$ defined by
$
g\action x = g\circ x\circ g^{-1}
$
 leaves invariant the subspace $\bigwedge^2_E\bm{D}_\Q$, and satisfies
\begin{equation}\label{wedge action}
g\action (a\wedge b) = \nu(g)^{-1}\cdot (ga)\wedge(gb).
\end{equation}
Using this formula one checks that the action of the subgroup 
$\GU^0( \bm{D}_\Q)$ commutes with the  Hodge star operator on $\bigwedge^2_E\bm{D}_\Q$, 
and  so preserves the subspace $\bm{L}_\Q$.

The \emph{canonical involution} $x\mapsto x'$ on the Clifford algebra
$C(\bm{L})$ is the unique $W$-linear endomorphism satisfying 
$
(x_1\cdots x_k)' = x_k\cdots x_1
$
for all $x_1,\ldots, x_k\in \bm{L}$, and the  spinor similitude group of $\bm{L}_\Q$ is
\[
\GSpin(\bm{L}_\Q )= \big\{ g\in C_0(\bm{L})_\Q^\times : g \bm{L}_\Q g^{-1} = \bm{L}_\Q \mbox{ and } g'g\in W_\Q^\times \big\}.
\]
Here $C_0(\bm{L})$ is the even Clifford algebra. From  \cite{Bass} or \cite{Shimura} we have the  exact sequence
\[
1 \to W_\Q^\times \to \GSpin(\bm{L}_\Q )   \to  \SO(\bm{L}_\Q) \to 1.
\]

\begin{proposition}\label{prop:gspin}
There is an isomorphism
\begin{equation}\label{exceptional}
 \GSpin(\bm{L}_\Q) \iso \GU^0( \bm{D}_\Q) 
\end{equation}
compatible with the action of both groups on $\bm{L}_\Q$.
In particular, the action of $\GU^0( \bm{D}_\Q)$ on $\bm{L}_\Q$ determines  an exact sequence
\[
1 \to W_\Q^\times \to \GU^0( \bm{D}_\Q)  \map{g\mapsto g\action} \SO(\bm{L}_\Q) \to 1.
\]
\end{proposition}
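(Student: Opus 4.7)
The plan is to use the identification $C(\bm{L}) \iso \End_W(\bm{D})$ from Proposition \ref{prop:basic hodge}(4), which restricts to $C_0(\bm{L}) \iso \End_{\co_E\otimes W}(\bm{D})$ and hence identifies $C_0(\bm{L})_\Q^\times$ with $\GL_{E\otimes W_\Q}(\bm{D}_\Q)$. The proof will then split into two halves: show that the condition $g'g \in W_\Q^\times$ cutting out $\GSpin(\bm{L}_\Q)$ corresponds to the $\bm{\lambda}$-similitude condition, while the condition $g \bm{L}_\Q g^{-1} = \bm{L}_\Q$ corresponds to the determinant condition $\det(g) = \nu(g)^2$.

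For the first correspondence I would identify the canonical involution $g\mapsto g'$ on $C(\bm{L})$ with the Rosati involution $g \mapsto g^*$ on $\End_W(\bm{D})$ determined by $\bm{\lambda}$. Both are $W$-linear anti-automorphisms, so it suffices to check that every $x\in \bm{L}$ is Rosati-fixed. The Hodge-fixed condition $x^\star = x$ forces $x$ to be of the form $(a\wedge b) + (a \wedge b)^\star$, and the identity $x^* = x$ then becomes a direct computation combining the formula $(a\wedge b)(z) = \langle a,z\rangle b - \langle b,z\rangle a$, the identity (\ref{hermite}) expressing $\bm{\lambda}$ in terms of $\langle\cdot,\cdot\rangle$, and the compatibility (\ref{linear polarization}) of $\bm{\lambda}$ with the $\co_E$-action. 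Granted this identification, $g \in C_0(\bm{L})_\Q^\times$ satisfies $g'g \in W_\Q^\times$ if and only if the corresponding element of $\GL_{E\otimes W_\Q}(\bm{D}_\Q)$ satisfies $g^* \bm{\lambda} = \nu(g)\bm{\lambda}$ with the same scalar $\nu(g) = g'g$.

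For the second correspondence I would first observe, by a direct calculation using the similitude relation $\langle gu, gv\rangle = \nu(g)\langle u,v\rangle$, that the Clifford conjugation of $g$ on $\bigwedge^2_E \bm{D}_\Q$ agrees with the twisted action of (\ref{wedge action}): $g(a\wedge b)g^{-1} = \nu(g)^{-1}(ga\wedge gb) = g\action(a\wedge b)$. Extending to $\bigwedge^4_E \bm{D}_\Q$, this yields $g\action \bm{\omega} = \nu(g)^{-2}\det(g)\cdot \bm{\omega}$, where $\det(g) \in E\otimes W_\Q$ is the determinant of $g$ acting on the rank $4$ module $\bm{D}_\Q$ over $E\otimes W_\Q$. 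From the defining property $y\wedge x^\star = \langle y,x\rangle\bm{\omega}$ of the Hodge star and the fact that $g\action$ is an isometry of $\langle\cdot,\cdot\rangle$, one deduces that $g\action$ commutes with $\star$ exactly when $g\action\bm{\omega} = \bm{\omega}$, i.e.\ when $\det(g) = \nu(g)^2$. Hence $g \in C_0(\bm{L})_\Q^\times$ satisfies $g\bm{L}_\Q g^{-1} = \bm{L}_\Q$ precisely when the corresponding element of $\GL_{E\otimes W_\Q}(\bm{D}_\Q)$ lies in $\GU^0(\bm{D}_\Q)$. Combined with the first step this gives the isomorphism (\ref{exceptional}), and compatibility with the action on $\bm{L}_\Q$ is built into the construction.

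Finally, the exact sequence follows by transport from the standard sequence for $\GSpin$ cited in \cite{Bass, Shimura}. Its kernel consists of elements commuting with every $x \in \bm{L}$ in the Clifford algebra, hence with the full algebra $C(\bm{L}) \iso \End_W(\bm{D})$, and is therefore $W_\Q^\times$. The surjectivity onto $\SO(\bm{L}_\Q)$ is the classical statement for the spinor similitude group of a nondegenerate quadratic space, reflecting the fact that each reflection of $\bm{L}_\Q$ lifts to Clifford multiplication by a vector. The principal technical obstacle is the identification of the canonical involution with the Rosati involution, which requires delicate bookkeeping of the Hermitian form and its interaction with the Hodge star.
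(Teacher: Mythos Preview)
Your proposal is correct and takes essentially the same approach as the paper. Both proofs use the Clifford algebra isomorphism of Proposition \ref{prop:basic hodge} to realize the two groups inside $C_0(\bm{L})_\Q^\times \iso \Aut_{E\otimes W_\Q}(\bm{D}_\Q)$, then match the involution condition $g'g\in W_\Q^\times$ with the similitude condition (the paper does this via the identity $\langle xa,b\rangle = -\overline{\langle a,xb\rangle}$ for $x\in\bm{L}$, which is exactly your Rosati-equals-canonical statement translated through (\ref{hermite})) and match the $\bm{L}_\Q$-stability condition with $\det(g)=\nu(g)^2$ via the Hodge star and $\bm{\omega}$ (the paper compares $(g\action y)\wedge(g\action x)$ two ways for a single $x\in\bm{L}_\Q$, which unwinds to your $g\action\bm{\omega}=\nu(g)^{-2}\det(g)\,\bm{\omega}$ computation).
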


\begin{proof}
By Proposition \ref{prop:basic hodge} the inclusion of $\bm{L}$ into $\End_W(\bm{D})$
induces an isomorphism $C(\bm{L}) \iso \End_W(\bm{D})$, under which 
$C_0(\bm{L})\iso \End_{\co_{E}\otimes W}(\bm{D}).$
We will prove  that the induced isomorphism 
\[
C_0(\bm{L} ) _\Q^\times \iso \Aut_{E\otimes W}(\bm{D}_\Q)
\]
 restricts to an isomorphism (\ref{exceptional}).
Note that every element $x\in \bm{L}$, viewed as an endomorphism of $\bm{D}$, satisfies
$
\langle x a,b \rangle = - \overline{ \langle a ,xb\rangle}
$
(indeed, this already holds for every $x\in \bigwedge_E^2\bm{D}$).
Thus $\langle g a, b\rangle = \langle a , g' b\rangle$
for every $g\in C_0(\bm{L})$ and $a,b\in \bm{D}$.

One inclusion of (\ref{exceptional}) is obvious: if $g\in \GU^0( \bm{D}_\Q)$ then, as noted above, the conjugation action of $g$ on 
$C(\bm{L})_\Q \iso \End_W(\bm{D})_\Q$ preserves the subspace $\bm{L}_\Q$.  The relation
$\langle g a, gb \rangle = \langle a, g'g b\rangle$
implies that $\nu(g)=g'g$, and so $g\in  \GSpin(\bm{L}_\Q)$.

For the other inclusion, start with a $g\in \GSpin(\bm{L}_\Q )$.  The relation
$\langle ga,gb \rangle =  (g'g) \langle a,b\rangle$
shows that $g\in \GU(\bm{D}_\Q)$.  To show that $\nu(g)^2=\det(g)$, fix any 
$x\in \bm{L}$ and any $y\in \bigwedge^2_E\bm{D}$ for which $\langle y, x \rangle\not=0$.  As $g\action x =gxg^{-1}$ lies in $\bm{L}_\Q$ by assumption,
the Hodge star operator fixes $g\action x$.  Thus
\[
(g \action y) \wedge (g\action x) = \langle g\action y , g\action x\rangle \bm{\omega} = \langle y,x\rangle \bm{\omega},
\]
where the second equality follows from (\ref{wedge action}).
On the other hand, the Hodge star operator fixes $x$, and so
\[
(g \action y) \wedge (g\action x) = \nu(g)^{-2} \det(g) (y\wedge x)= \nu(g)^{-2} \det(g)\langle y,x\rangle \bm{\omega}.
\]
This proves that $g\in \GU^0(\bm{D}_\Q)$, and completes the proof of (\ref{exceptional}).
\end{proof}

The similitude character $\nu : \GU^0(\bm{D}_\Q ) \to W_\Q^\times$ restricts to $x\mapsto x^2$ on the subgroup 
$W_\Q^\times$, and so descends to the \emph{spinor norm}
\[
\tilde{\nu} : \SO(\bm{L}_\Q)  \to W_\Q^\times / (W_\Q^\times)^2.
\]

\begin{remark}\label{excRemark}
The group $J$ defined in Section \ref{ss:RZ} is characterized by
\[
J = \{ g\in \GU(\bm{D}_\Q) : g\circ F = F\circ g \},
\]
and we define a subgroup
\[
J ^0 = \{ g\in \GU^0(\bm{D}_\Q) : g\circ F = F\circ g \}.
\]
The isomorphism (\ref{exceptional}) restricts to an isomorphism
$\GSpin(\bm{L}_\Q^\Phi) \iso J^0$, and hence there is an exact sequence
\[
1 \to \Q_p^\times \to J^0 \to \SO(\bm{L}_\Q^\Phi) \to 1,
\]
which identifies  $J^\mathrm{der}\iso \Spin(\bm{L}_\Q^\Phi)$.
See \cite[Proposition  IV.15.27]{BookInvolutions} for similar exceptional isomorphisms.
\end{remark}


\subsection{Dieudonn\'e lattices and special lattices}
\label{ss:dieudonne and special}


In this subsection we show that the  $k$-points of $\mathscr{N}$ can be identified with the
set of homothety classes of certain lattices in $\bm{D}_\Q$, which we call \emph{Dieudonn\'e lattices}.  
We then use the inclusion
\[
\bm{L}_\Q \subset \End_W(\bm{D}_\Q).
\]
to construct  a bijection  between the set of homothety classes of Dieudonn\'e lattices and a set
of \emph{special lattices} in the slope $0$ isocrystal  $\bm{L}_\Q$.  Thus the points of 
$\mathscr{N}(k)$ are parametrized by these special lattices.

In fact, the proof of Theorem \ref{thm:comparison} below requires that we establish such a bijection 
not just over $k$, but over any extension field $k'\supset k$.
Let $W'$ be the Cohen ring of $k'$.  Thus $W'$ is the unique, up to isomorphism, complete discrete
valuation ring of mixed characteristic with residue field $W'/pW' \iso k'$.  The inclusion $k\to k'$
induces an injective ring homomorphism $W\to W'$, and we set $\bm{D}'=\bm{D}\otimes_W W'$
and $\bm{L}'=\bm{L}\otimes_W W'$.  There is a unique continuous ring homomorphism $\sigma:W'\to W'$
reducing to the Frobenius on $k'$, and the $\sigma$-semi-linear operators $F$ and $\Phi$ on $\bm{D}_\Q$
and $\bm{L}_\Q$ have unique $\sigma$-semi-linear extensions to $\bm{D}'_\Q$ and $\bm{L}'_\Q$.
Similarly the symplectic  and Hermitian forms  on $\bm{D}_\Q$ and the  quadratic form 
on $\bm{L}_\Q$ have natural extensions to $\bm{D}'_\Q$ and $\bm{L}'_\Q$.

Note that the operators $F$ and $\Phi$ are surjective on $\bm{D}_\Q$ and $\bm{L}_\Q$, respectively,
but this need not be true of their extensions to $\bm{D}'_\Q$ and $\bm{L}'_\Q$.
If $D\subset \bm{D}_\Q'$ is a $W'$-submodule then so is its preimage $F^{-1}(D)$, but its image 
$F(D)$ need not be.   Denote by $F_*(D)$ the $W'$-submodule
 generated by $F(D)$.  Similarly,  denote by  $\Phi_*(L)$  the $W'$-submodule generated by $\Phi(L)$ 
for a $W'$-submodule $L\subset\bm{L}_\Q'$.

For any $W'$-lattice $D\subset\bm{D}_\Q'$, set $D_1=F^{-1}(pD)$.

\begin{definition}\label{def:special lattice}
A \emph{Dieudonn\'e lattice} in $\bm{D}'_\Q$ is an $\co_E$-stable $W'$-lattice $D\subset \bm{D}'_\Q$  such that 
\begin{enumerate}
\item
$pD \subset D_1 \subset D$,
\item
$D^\vee= c  D$ for some $c\in \Q_p^\times$,
\item
$D=F_*( F^{-1}(D))$.
\end{enumerate}
Here the superscript $\vee$ denotes dual lattice with respect to the symplectic form $\bm{\lambda}$,
or, equivalently, with respect to the Hermitian form $\langle\cdot,\cdot\rangle$.
\end{definition}

The \emph{volume} of a lattice $D\subset \bm{D}_\Q'$ is the $W'$-submodule
\[
\mathrm{Vol}(D) = \bigwedge\nolimits^8 D \subset \bigwedge\nolimits^8\bm{D}_\Q',
\]
By considering the slopes of the isocrystal $\bm{D}_\Q$  one can show that 
$\mathrm{Vol}(F_*(D)) = p^4 \cdot \mathrm{Vol}(D)$.  However, taking preimages of
lattices may change volumes in  unexpected ways:  a lattice $D\subset \bm{D}'_\Q$ satisfies 
\[
\mathrm{Vol}(F^{-1}(D)) \subset p^{-4} \cdot \mathrm{Vol}(D),
\]
but equality holds if and only if $F_*(F^{-1}(D))=D$.   In particular,  the condition 
$D=F_*(F^{-1}(D))$ in Definition \ref{def:special lattice} is equivalent to 
$\mathrm{Vol}(D_1)=p^4 \mathrm{Vol}(D)$, and so one could replace  
 (3) in the definition of Dieudonn\'e lattice by 
\begin{enumerate}
\item[(3${}^\prime$)]  $\mathrm{dim}_{k'}(D_1/pD)=4$.
\end{enumerate}
The volume of a lattice in $\bm{L}_\Q'$ is defined in the analogous way, but now  
$\mathrm{Vol}(\Phi_*(L)) = \mathrm{Vol}(L)$ for any  lattice $L\subset\bm{L}_\Q'$.

\begin{proposition}\label{prop:dieudonne signature}
Suppose $D$ is a Dieudonn\'e lattice.
The $\co_E$-stable $k'$-subspace $D_1/pD \subset D/pD$ is  Lagrangian 
with respect to the nondegenerate symplectic form $c \bm{\lambda}$, and every $\alpha\in \co_E$ 
acts on $D/D_1$ with characteristic polynomial 
\begin{equation}\label{dieudonne signature}
\det(T- \iota(\alpha) ; D/D_1 ) = (T-\psi_0(\alpha))^2(T-\psi_1(\alpha))^2.
\end{equation}
\end{proposition}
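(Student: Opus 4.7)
The plan is to handle the two claims separately, exploiting the $\co_E$-decomposition $D=\epsilon_0D\oplus\epsilon_1D$ throughout. The condition $D^\vee=cD$ immediately makes $c\bm{\lambda}$ a nondegenerate symplectic form on the $8$-dimensional $k'$-vector space $D/pD$. To show that $D_1/pD$ is isotropic, I would derive the identity
\[
\bm{\lambda}(Vu,Vv)=p\,\bm{\lambda}(u,v)^{\sigma^{-1}}
\]
in one line from the fundamental relation $\bm{\lambda}(Fx,y)=\bm{\lambda}(x,Vy)^\sigma$ together with $FV=p$. Since $c\bm{\lambda}(D,D)\subset W'$ by the self-duality condition, this yields $c\bm{\lambda}(D_1,D_1)\subset pW'$. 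Combined with the dimension equality $\dim_{k'}(D_1/pD)=4$ from condition~(3${}'$), which is exactly half of $\dim_{k'}(D/pD)$, this gives the Lagrangian claim.

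For the signature, I would first reduce the claim to a single integer identity. Since $V$ is $\sigma^{-1}$-semi-linear and $\sigma$ interchanges the embeddings $\psi_0,\psi_1$, the operator $V$ satisfies $V(\epsilon_iD)\subset\epsilon_{1-i}D$, so
\[
D/D_1 \;=\; \bigl(\epsilon_0D/V(\epsilon_1D)\bigr)\oplus\bigl(\epsilon_1D/V(\epsilon_0D)\bigr)
\]
as $\co_E$-modules, with $\co_E$ acting on the two summands through $\psi_0$ and $\psi_1$ respectively. Writing $a_i:=\dim_{k'}(\epsilon_iD/V(\epsilon_{1-i}D))$, the characteristic polynomial automatically takes the form $(T-\psi_0(\alpha))^{a_0}(T-\psi_1(\alpha))^{a_1}$, and $a_0+a_1=4$ by condition~(3${}'$). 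To conclude $a_0=a_1=2$, I would verify $F^2=p$ on $\bm{D}'_\Q$ from the standard basis of Lemma~\ref{lem:standard coords}, and then combine this with the adjointness of the restricted maps $V_0,V_1,F_0,F_1\colon \epsilon_iD\to\epsilon_{1-i}D$ under the perfect pairing $c\bm{\lambda}\colon \epsilon_0D\times\epsilon_1D\to W'$. Matching their semi-linear determinants via the resulting identities will then force the symmetry $a_0=a_1$.

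The main obstacle will be this last step: the relation $a_0+a_1=4$ is immediate from the Lagrangian property and $\co_E$-stability, but separating the two indices requires careful bookkeeping with semi-linear determinants, the self-duality $D^\vee=cD$, and the identity $F^2=p$, in order to rule out the asymmetric cases $(a_0,a_1)\in\{(4,0),(3,1),(1,3),(0,4)\}$. By contrast, the Lagrangian step is a direct computation with the symplectic structure, and the reduction of the characteristic polynomial to a single equality between two $\co_E$-eigenspace dimensions is routine.
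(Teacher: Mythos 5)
Your plan tracks the paper's proof in broad strokes --- split along $\epsilon_0,\epsilon_1$, use the symplectic structure for the Lagrangian claim, and reduce the signature claim to $a_0=a_1=2$ with $a_0+a_1=4$ forced --- but it has two problems, one technical and one substantive. The technical one: you route everything through $V$ and $\sigma^{-1}$. Proposition~\ref{prop:dieudonne signature} must hold over an arbitrary extension $k'\supset k$ (it feeds into the proof of Theorem~\ref{thm:the bijection}), and when $k'$ is not perfect the lift $\sigma\colon W'\to W'$ is injective but not surjective, so $\sigma^{-1}$ does not exist and there is no $\sigma^{-1}$-semi-linear $V$ on $\bm{D}'_\Q$; the paper flags this explicitly in Section~\ref{ss:dieudonne and special}, and accordingly $D_1$ is \emph{defined} as $F^{-1}(pD)$, not as $VD$. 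The paper's own Lagrangian computation avoids $V$ entirely: for $a,b\in D_1$ one has $Fa,Fb\in pD$, hence $c\bm{\lambda}(a,b)^\sigma=p^{-1}c\bm{\lambda}(Fa,Fb)\in pW'$, and injectivity of $\sigma$ gives $c\bm{\lambda}(a,b)\in pW'$. Your isotropy argument is repairable the same way, but the $V(\epsilon_iD)$ description of $D/D_1$ in the second half should likewise be rewritten in terms of $\epsilon_iF^{-1}(pD)$.

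The substantive gap is that you leave the crux of the signature claim as an unexecuted ``main obstacle.'' The equality $a_0=a_1$ is exactly where the content of the proposition lies; your proposal only lists ingredients (adjointness of $F$ and $V$ under $c\bm{\lambda}$, $F^2=p$, semi-linear determinants) without showing how they combine to rule out $(a_0,a_1)\in\{(4,0),(3,1),(1,3),(0,4)\}$. The paper settles it with a short volume count: Lemma~\ref{lem:standard coords} exhibits $F\colon\epsilon_0\bm{D}'_\Q\to\epsilon_1\bm{D}'_\Q$ as a $\sigma$-semi-linear map whose determinant has $p$-adic order~$2$, and condition~(3) of Definition~\ref{def:special lattice} gives $F_*(\epsilon_iD_1)=p\,\epsilon_{1-i}D$; comparing top exterior powers then pins down $\dim_{k'}(\epsilon_iD/\epsilon_iD_1)=2$ for each $i$. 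As written, the signature half of your proposal is a plausible plan rather than a proof, and the deferred ``bookkeeping'' is precisely the nontrivial step.
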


\begin{proof}
For any $a,b\in D_1$ we have
\[
c\bm{\lambda}(a,b)^\sigma = p^{-1} c \bm{\lambda} (Fa,Fb) \in p \bm{\lambda}(cD,D) = pW.
\]
This shows that $D_1/pD$ is isotropic.  It is maximal isotropic, as $D_1/pD$ has dimension $4$.
 Lemma \ref{lem:standard coords} implies that 
\[
\bigwedge\nolimits^4 F_*(\epsilon_0 M ) = p^2 \cdot \bigwedge\nolimits^4 \epsilon_1 M,
\]
as submodules of $\bigwedge\nolimits^4 \epsilon_1 \bm{D}_\Q'$, for any lattice $M\subset \bm{D}_\Q'$.
Applying this with $M=D_1$ shows that  $\epsilon_1 D/\epsilon_1D_1$ has dimension $2$. 
The same argument shows that
$\epsilon_0 D/\epsilon_0D_1$ has dimension $2$, and  (\ref{dieudonne signature}) follows.
\end{proof}

\begin{corollary}\label{cor:dieudonne bijection}
There is a bijection $\mathscr{M}(k') \iso  \{ \mbox{Dieudonn\'e lattices in }\bm{D}_\Q' \}.$
\end{corollary}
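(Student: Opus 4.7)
The plan is to translate between points of $\mathscr{M}(k')$ and Dieudonn\'e lattices via covariant Dieudonn\'e theory, checking that the three conditions in the definition of a Dieudonn\'e lattice encode exactly the structure carried by such a quadruple.

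Starting from $(G,\iota,\lambda,\varrho)\in \mathscr{M}(k')$, I would take $D=D(G)$ to be the covariant Dieudonn\'e module of $G$ over $W'$. The quasi-isogeny $\varrho$ identifies $D\otimes_{W'} W'_\Q$ with $\bm{D}'_\Q$ as isocrystals, so $D\subset \bm{D}'_\Q$ is a $W'$-lattice, stable under $F$ and under $V=pF^{-1}$. The $\co_E$-action $\iota$ makes $D$ into an $\co_E$-stable lattice; the identity $\varrho^*\bm{\lambda}=c(\varrho)\lambda$ translates into self-duality $D^\vee=c(\varrho)D$, giving condition (2); and since $F$ is injective on $\bm{D}'_\Q$, one has $D_1=F^{-1}(pD)=VD$, so $pD = FV D \subset VD \subset D$ yields condition (1). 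Condition (3${}^\prime$) then reads $\dim_{k'}(VD/pD)=4$, which holds because $G$ has height $8$ and dimension $4$.

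Conversely, starting from a Dieudonn\'e lattice $D\subset \bm{D}'_\Q$, condition (1) forces both $F(D)\subset D$ (via $F(pD)\subset F(D_1)\subset pD$) and $V(D)\subset D$ with $V:=pF^{-1}$. Thus $(D,F,V)$ is a Dieudonn\'e module, corresponding to a $p$-divisible group $G$ over $k'$ of height $8$; Proposition \ref{prop:dieudonne signature} shows that $\Lie(G)=D/D_1$ has dimension $4$ and that $\co_E$ acts with the signature $(2,2)$ determinant condition (\ref{kottwitz}). The $\co_E$-stability of $D$ gives $\iota$; condition (2) lets one rescale the Hermitian form to a perfect alternating pairing on $D$, hence a principal polarization $\lambda:G\to G^\vee$; and the inclusion $D\subset \bm{D}'_\Q$ is by construction an $\co_E$-linear quasi-isogeny $\varrho:G\to \bm{G}_{k'}$ with $\varrho^*\bm{\lambda}=c\lambda$. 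Compatibility (\ref{linear polarization}) is built in via (\ref{hermite}), and supersingularity of $G$ follows from the existence of the quasi-isogeny to $\bm{G}_{k'}$.

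The main subtlety is the possibly non-perfect base $k'$: classical Dieudonn\'e theory must be replaced by its analogue over the Cohen ring $W'$, and the potential failure of $F$ to behave well on $W'$-submodules is precisely what makes condition (3) not automatic (in contrast to the perfect case, where $\dim_{k'}(D_1/pD)=4$ would suffice). Granting this framework, the two constructions are visibly mutually inverse, proving the bijection.
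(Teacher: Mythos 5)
Your approach is the same one the paper takes: the bijection is the equivalence of categories between $p$-divisible groups over $k'$ and the appropriate semi-linear algebra data over $W'$ (classical Dieudonn\'e modules when $k'=k$, Zink's windows in general), and the paper's own proof is essentially a one-sentence citation of this. You spell out the verifications, and the checks of conditions (1), (2), and (3${}^\prime$) in the forward direction and the use of Proposition~\ref{prop:dieudonne signature} in the reverse direction are all in the right spirit.

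One technical imprecision worth flagging: you repeatedly use $V=pF^{-1}$ as an operator on $\bm{D}'_\Q$, and in the reverse direction you write ``Thus $(D,F,V)$ is a Dieudonn\'e module.'' When $k'$ is not perfect, $\sigma\colon W'\to W'$ is injective but not surjective, so $F$ is injective but not surjective on $\bm{D}'_\Q$ and there is no $\sigma^{-1}$-semi-linear Verschiebung; the triple $(D,F,V)$ does not make sense. The correct substitute is exactly the window $(D,D_1,F,\dot F)$ with $\dot F = p^{-1}F|_{D_1}$, and condition (3) is precisely the requirement $\dot F_*(D_1)=D$ that makes this an honest window (one checks $F_*(F^{-1}(pD))=pD$ from condition (3), whence $\dot F_*(D_1)=D$). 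You do acknowledge at the end that the framework must be replaced over a non-perfect base, but the intermediate steps should be restated in window language rather than with $V$. Finally, the closing parenthetical is slightly off: over a perfect field $F$ is bijective, so $F_*(F^{-1}(D))=D$ holds for \emph{every} lattice; condition (3) (and hence (3${}^\prime$), given (1)) is automatic, not merely replaceable by (3${}^\prime$). The point of (3) is precisely to compensate for the failure of surjectivity of $F$ when $k'$ is not perfect.
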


\begin{proof}
If $k=k'$ then this is immediate from the equivalence of categories between Dieudonn\'e modules and
$p$-divisible groups: to any point $(G,\iota,\lambda,\varrho) \in \mathscr{M}(k)$ we let $D$
be the Dieudonn\'e module of $G$, viewed as a lattice in $\bm{D}_\Q$ using the isomorphism of 
isocrystals $\varrho:D_\Q \iso \bm{D}_\Q$.  For general $k'$ the argument is the same, using Zink's theory
of windows \cite{Zink} in place of Dieudonn\'e modules.
\end{proof}

\begin{theorem}\label{thm:the bijection}
Given a  Dieudonn\'e lattice $D$, set
\[
L =\{ x\in \bm{L}'_\Q : x D_1\subset D_1 \} 
\quad \mbox{ and } \quad
L^\sharp  =\{ x\in \bm{L}'_\Q : x D\subset D \}.
\]
The rule $D\mapsto (L, L^\sharp )$ defines a bijection from 
$p^\Z\backslash \{ \mbox{Dieudonn\'e lattices in $\bm{D}'_\Q$} \}$
to the set of all pairs of self-dual lattices $(L , L^\sharp)$ in $\bm{L}'_\Q$ such that 
\begin{enumerate}
\item
$\Phi_*(L) = L^\sharp$,
\item
$(L+L^\sharp)/L$ has length $1$.
\end{enumerate}
Moreover,  $L + L^\sharp = \{ x\in \bm{L}_\Q' : x D_1 \subset D \}.$
\end{theorem}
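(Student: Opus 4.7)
The essential tool is the Clifford algebra identification $C(\bm{L}') \iso \End_{W'}(\bm{D}')$ from Proposition~\ref{prop:basic hodge}(4). This sets up a correspondence between self-dual lattices in $\bm{L}'_\Q$ and $p^\Z$-homothety classes of self-dual (up to scalar) lattices in $\bm{D}'_\Q$: given a self-dual lattice $\Lambda \subset \bm{L}'_\Q$, the Clifford subalgebra $C(\Lambda)$ is an Azumaya $W'$-algebra sitting inside the split matrix algebra $\End_{W'}(\bm{D}')_\Q$, hence is itself split, and therefore has a unique Clifford module lattice $M \subset \bm{D}'_\Q$ up to $W'^\times$-scalar, characterized by $\Lambda = \bm{L}'_\Q \cap \End_{W'}(M)$. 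I would build the proof on this correspondence.

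\textbf{Forward direction.} Given a Dieudonn\'e lattice $D$, Proposition~\ref{prop:dieudonne signature} shows both $D$ and $D_1 = F^{-1}(pD)$ are self-dual up to scalar, so the correspondence attaches them self-dual lattices $L^\sharp$ and $L$ in $\bm{L}'_\Q$. The key computation is that $\Phi(x) D \subset D$ is equivalent to $x D_1 \subset D_1$, since $F^{-1}(D) = p^{-1} D_1$; this gives $\Phi(L) \subset L^\sharp$, and the containment upgrades to $\Phi_*(L) = L^\sharp$ precisely because of the defining condition $D = F_*(F^{-1}(D))$ (otherwise one could lose generators). The identity $L + L^\sharp = \{x : x D_1 \subset D\}$ has an obvious inclusion, and the reverse follows by projecting such $x$ through the quotient $\End(D/D_1)$, then using the signature-$(2,2)$ decomposition of $D/D_1$ to write the residue class as a sum of an element of $L^\sharp/(L\cap L^\sharp)$ and one coming from $L$; the length-1 assertion is a rank count for this projection.

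\textbf{Inverse direction.} Given a pair $(L, L^\sharp)$, invoke the Clifford correspondence to obtain homothety classes of lattices $D$ and $D_1$ with $L^\sharp = \bm{L}'_\Q \cap \End_{W'}(D)$ and $L = \bm{L}'_\Q \cap \End_{W'}(D_1)$. The length-1 hypothesis, read back through the correspondence, lets one choose representatives in these homothety classes with $pD \subset D_1 \subset D$ and $D/D_1$ of $W'$-length $4$, and $\Phi_*(L) = L^\sharp$ translates via the same $\Phi$--$F$ identity back into $D = F_*(F^{-1}(D))$. The $\co_E$-stability of $D$ is automatic because the even Clifford algebra is $\End_{\co_E \otimes W'}(\bm{D}')$ (Proposition~\ref{prop:basic hodge}(4)), and the signature-$(2,2)$ condition on $D/D_1$ needs to be deduced from the length-1 condition by tracking the Hodge star action, which interchanges the two $\co_E$-isotypic components of $\bigwedge^2_E \bm{D}'$ and so balances the idempotent contributions.

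\textbf{Main obstacle.} The principal hurdle is to match the signature-$(2,2)$ condition on the Dieudonn\'e side with the length-1 condition on the special-lattice side in \emph{both} directions. In the forward direction one needs an equality, not merely an inequality, of lengths; in the inverse one must extract the signature $(2,2)$ from length-$1$ alone and rule out signatures $(3,1)$, $(1,3)$, or $(4,0)$. This requires a careful bookkeeping of how the $\co_E$-action, the Hermitian form, and the Hodge star operator interact on the relevant exterior powers, and is essentially the crux that makes the quadratic formalism for $\bm{L}'_\Q$ genuinely equivalent to the Hermitian formalism for $\bm{D}'_\Q$ in the $(2,2)$ case.
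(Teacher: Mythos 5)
Your overall strategy --- use the Clifford identification of Proposition~\ref{prop:basic hodge}(4) to set up a dictionary between nearly self-dual lattices in $\bm{D}'_\Q$ and self-dual lattices in $\bm{L}'_\Q$, apply it to both $D$ and $D_1$, and translate the $F$-structure into a $\Phi$-structure via $\Phi(x)\circ F = F\circ x$ --- is the same as the paper's, where the dictionary is packaged as Lemma~\ref{lem:bijection lemma 1}. But the rest of your sketch has two real gaps.

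First, your ``main obstacle'' rests on a misreading of Definition~\ref{def:special lattice}. A Dieudonn\'e lattice is defined by $\co_E$-stability, $pD\subset D_1\subset D$, near self-duality, and $D=F_*(F^{-1}(D))$ --- and nothing else. There is no signature-$(2,2)$ requirement in the definition, so in the inverse direction there are no signatures $(3,1)$, $(1,3)$, $(4,0)$ to rule out; the $\co_E$-signature of $D/D_1$ is a \emph{consequence} of the other axioms (Proposition~\ref{prop:dieudonne signature}), not a hypothesis you need to re-derive from the length-$1$ condition. What the inverse direction actually requires, and what your sketch omits, is a concrete argument that the Clifford module $D_1$ supplied by the dictionary can be rescaled so that $C(L^\sharp)D_1 = D$, $pD\subset D_1\subset D$, and (by a volume count) $F_*(D_1)=pD$; condition $(3)$ then follows.

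Second, your forward-direction length-$1$ argument (``project through $\End(D/D_1)$... rank count'') is too vague to check, and it silently replaces a genuine lemma the paper must prove: Lemma~\ref{lem:bijection lemma 3}, which establishes a bijection between isotropic lines $\mathscr{L}_1\subset L^\sharp/pL^\sharp$ and $\co_E$-stable Lagrangian subspaces $\mathscr{D}_1\subset D/pD$, together with the identity $\mathscr{L}_1^\perp = \{x : x\mathscr{D}_1\subset\mathscr{D}_1\}$. Granting that lemma, the length-$1$ claim is immediate: $D_1/pD$ is $\co_E$-stable Lagrangian, so it corresponds to an isotropic line $\mathscr{L}_1$, and $(L+L^\sharp)/L\iso L^\sharp/(L\cap L^\sharp)\iso (L^\sharp/pL^\sharp)/\mathscr{L}_1^\perp$ has length $1$; the same lemma also yields the final formula for $L+L^\sharp$. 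Your sketch has no substitute for this residue-level $\mathrm{GSpin}$--$\GU$ input. Finally, note that the paper gets $\Phi_*(L)=L^\sharp$ by a clean volume argument (self-duality of both sides and $\mathrm{Vol}(\Phi_*(L))=\mathrm{Vol}(L)$), which sidesteps the surjectivity issues with $\Phi$ over an imperfect residue field that your ``$F^{-1}(D)=p^{-1}D_1$ plus $D=F_*(F^{-1}(D))$'' reasoning would have to confront directly.
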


The proof of Theorem \ref{thm:the bijection} will be given in the next subsection.

\begin{definition}
A \emph{special lattice} is a  self-dual $W'$-lattice $L\subset \bm{L}'_\Q$  such that 
 \[
 \length \big( (L+\Phi_*(L) ) / L \big)=1.
 \]
\end{definition}

Obviously any pair of self-dual lattices $(L,L^\sharp)$ appearing in Theorem \ref{thm:the bijection}  
is determined by its first element, and in fact the function $L\mapsto (L,\Phi_*(L))$ establishes
a bijection between the set of special lattices and the set of pairs of self-dual  lattices $(L , L^\sharp)$
such that   $\Phi_*(L) = L^\sharp$ and  $(L+L^\sharp)/L$ has length $1$.
The only thing to check is   the self-duality of
$\Phi_*(L)$ for a special lattice $L$.  The inclusion $\Phi_*(L) \subset \Phi_*(L)^\vee$
is clear from the self-duality of $L$ and the relation   $[\Phi x,\Phi y] = [x,y]^\sigma$.  Equality holds because
$\mathrm{Vol}(\Phi_*(L)) = \mathrm{Vol}(L)$ and $L$ is self-dual.
The following corollary is now simply a restatement of Theorem \ref{thm:the bijection}.

\begin{corollary}\label{cor:the bijection}
The rule $D\mapsto \{ x\in \bm{L}'_\Q : x D_1\subset D_1 \} $ defines a bijection 
\[
p^\Z\backslash \{ \mbox{Dieudonn\'e lattices in $\bm{D}'_\Q$} \} \iso \{ \mbox{special lattices in }\bm{L}_\Q' \}.
\]
\end{corollary}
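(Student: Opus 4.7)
My first task is to unwind the definitions. Using $D_1 = F^{-1}(pD)$, the condition $xD_1 \subset D_1$ on $x \in \bm{L}'_\Q$ is equivalent to $FxF^{-1}(pD) \subset pD$, i.e., $\Phi(x) \in L^\sharp$. This yields the set-theoretic identity $L = \Phi^{-1}(L^\sharp)$, and since $\Phi$ acts bijectively on the slope-zero isocrystal $\bm{L}'_\Q$, I deduce condition (1), $\Phi_*(L) = L^\sharp$. A parallel manipulation gives $L + L^\sharp = \{x : xD_1 \subset D\}$: the inclusion $\subset$ is immediate from the definitions of $L$ and $L^\sharp$, while the reverse inclusion follows from the length-one gap $D/D_1$, which produces exactly one extra direction beyond $L$ in the space of endomorphisms sending $D_1$ into $D$.

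\textbf{Step 2 (self-duality and length one).}  The core structural point is the self-duality of $L^\sharp$ (from which self-duality of $L$ follows via the isometry property of $\Phi$), together with the length-one condition $(L+L^\sharp)/L$. I plan to establish both by choosing a $W'$-basis of $D$ adapted to the $\co_E$-action and the Lagrangian chain $pD \subset D_1 \subset D$ provided by Proposition \ref{prop:dieudonne signature}, refined by the signature $(2,2)$ decomposition $\epsilon_0(D/D_1) \oplus \epsilon_1(D/D_1)$ with each summand of $k'$-dimension $2$. In such a basis, the action of $\bm{L}'_\Q$ on $D$ and $D_1$ becomes explicit, and the argument parallels the base case $D = \bm{D}'$ of Lemma \ref{lem:standard coords}, where $L^\sharp(\bm{D}') = \bm{L}'$ is self-dual by Proposition \ref{prop:basic hodge}(3), $L(\bm{D}') = \Phi^{-1}(\bm{L}')$ is self-dual by direct check, and $(L+L^\sharp)/L$ has length exactly one.

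\textbf{Step 3 (inverse construction and bijectivity).}  Given a pair $(L, L^\sharp)$ satisfying (1) and (2), I would recover $D$ via the Clifford isomorphism $C(\bm{L}') \iso \End_{W'}(\bm{D}')$ of Proposition \ref{prop:basic hodge}(4). Self-duality of $L^\sharp$ forces the Clifford order $C(L^\sharp) \subset \End_{W'_\Q}(\bm{D}'_\Q)$ to be maximal, so $C(L^\sharp) = \End_{W'}(D)$ for a lattice $D \subset \bm{D}'_\Q$ unique up to scalar multiplication. Condition (1) translates across the Clifford isomorphism into the integrality condition $D = F_*(F^{-1}(D))$, and the length-one condition translates, via Proposition \ref{prop:dieudonne signature} read in reverse, into the signature $(2,2)$. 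Injectivity modulo $p^\Z$ is then immediate: two Dieudonn\'e lattices with the same $L^\sharp$ share the same maximal endomorphism order $C(L^\sharp)$, hence differ only by a central scalar, which the normalization $D^\vee = cD$ forces to be a power of $p$.

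\textbf{Main obstacle.}  The hardest part of the argument will be the self-duality of $L^\sharp$ for a general Dieudonn\'e $D$ and the matching of the length-one condition with the signature $(2,2)$ condition. Both reduce ultimately to tracking how the Clifford action on $\bm{D}'$ interacts with the filtration $pD \subset D_1 \subset D$, and the cleanest route seems to be the direct coordinate computation in the basis adapted to Proposition \ref{prop:dieudonne signature}, modeled on the calculation that works for the base Dieudonn\'e lattice; a purely Clifford-algebraic proof of $C(L^\sharp) = \End_{W'}(D)$ would be cleaner but appears to require self-duality as input.
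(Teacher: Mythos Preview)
Your overall architecture --- self-duality and length one for $(L,L^\sharp)$, then inversion via maximal Clifford orders --- matches the paper's. But Step 1 contains a factual error: $D/D_1$ has $k'$-dimension $4$ (this is condition $(3')$ in the definition of Dieudonn\'e lattice), not $1$, so your reverse-inclusion argument for $L+L^\sharp = \{x : xD_1\subset D\}$ collapses as written. Also, over a general extension $k'$ the operator $\Phi$ is not surjective on $\bm{L}'_\Q$ (this is precisely why the paper introduces $\Phi_*$), so $L=\Phi^{-1}(L^\sharp)$ does not immediately give $\Phi_*(L)=L^\sharp$; one needs the volume identity $\mathrm{Vol}(\Phi_*(L))=\mathrm{Vol}(L)$ together with self-duality of both lattices.

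The deeper gap is in Step 2. The paper does not proceed by brute-force coordinates in an adapted basis of $D$. For self-duality it uses Lemma \ref{lem:bijection lemma 1}: any nearly self-dual lattice in $\bm{D}'_\Q$ lies in the $\GU^0(\bm{D}'_\Q)$-orbit of $\bm{D}'$, so its stabilizer in $\bm{L}'_\Q$ is an isometric translate of $\bm{L}'$ and hence self-dual; since $D_1$ is also nearly self-dual this handles $L$ and $L^\sharp$ simultaneously. For the length-one condition the paper proves a separate Clifford-algebraic fact (Lemma \ref{lem:bijection lemma 3}): isotropic lines in $L^\sharp/pL^\sharp$ correspond bijectively to $\co_E$-stable Lagrangians in $D/pD$, and the orthogonal complement of a line equals the stabilizer of the corresponding Lagrangian. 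Applied to $D_1/pD$, this identifies $(L\cap L^\sharp)/pL^\sharp$ with a hyperplane, giving length one. This correspondence is the technical heart of the proof and is not a routine check parallel to the base case $\bm{D}'$; your proposal does not isolate it. The same lemma, run in reverse, is what drives Step 3: the paper recovers $D_1$ and $D$ separately from $L$ and $L^\sharp$ via Lemma \ref{lem:bijection lemma 1}, then uses the length-one hypothesis through the decomposition $C(L^\sharp)=C(L_0)+yC(L_0)$ to force $pD\subset D_1\subset D$ and $F_*(D_1)=pD$ --- not Proposition \ref{prop:dieudonne signature} read backwards.
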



\subsection{Proof of Theorem \ref{thm:the bijection}}


In this subsection we prove Theorem \ref{thm:the bijection}.
Say that a $W'$-lattice $D\subset \bm{D}'_\Q$ is \emph{nearly self-dual} if $D^\vee=cD$ for some 
$c\in \Q_p^\times$.

\begin{lemma}\label{lem:bijection lemma 1}
The construction $D\mapsto \{x\in \bm{L}_\Q' : xD \subset D\}$  establishes a bijection 
\[
p^\Z\backslash \{ \mbox{nearly self-dual lattices $D\subset \bm{D}'_\Q$} \} \iso 
\{ \mbox{self-dual lattices  $L^\sharp\subset \bm{L}'_\Q$} \}.
\]
\end{lemma}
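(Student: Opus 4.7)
The plan is to use the Clifford algebra isomorphism $C(\bm{L}) \iso \End_W(\bm{D})$ of Proposition \ref{prop:basic hodge}(4), base-changed to $C(\bm{L}'_\Q) \iso \End_{W'}(\bm{D}'_\Q)$, together with the classical correspondence $D \mapsto \End_{W'}(D)$ between homothety classes of $W'$-lattices $D \subset \bm{D}'_\Q$ and maximal $W'$-orders in $\End_{W'}(\bm{D}'_\Q)$. The strategy is to cut this correspondence down by matching symmetry conditions on both sides: nearly self-duality of $D$ with stability of the maximal order under the canonical involution $'$, and the latter with self-duality of $L^\sharp = R \cap \bm{L}'_\Q$.

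The first step is to verify that under the Clifford algebra isomorphism the canonical involution $x \mapsto x'$ corresponds to the Rosati involution $x \mapsto x^*$ on $\End_{W'}(\bm{D}'_\Q)$ induced by $\bm{\lambda}$. Since both are anti-involutions of the ambient algebra, it suffices to check agreement on the generators $\bm{L}$, where $x' = x$. Using (\ref{hermite}), the identity $\overline{\delta^{-1}} = -\delta^{-1}$, and the relation $\langle xa, b \rangle = -\overline{\langle a, xb\rangle}$ noted in the proof of Proposition \ref{prop:gspin}, a short computation yields $\bm{\lambda}(xa, b) = \bm{\lambda}(a, xb)$, i.e.\ $x^* = x$. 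Combined with the elementary fact that $\End_{W'}(D)^* = \End_{W'}(D^\vee)$ and the uniqueness (up to scalar) of a lattice producing a given maximal order, this identifies homothety classes of nearly self-dual lattices $D$ with $'$-stable maximal orders in $C(\bm{L}'_\Q)$.

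The central step is then to produce a bijection between $'$-stable maximal orders $R \subset C(\bm{L}'_\Q)$ and self-dual $W'$-lattices $L^\sharp \subset \bm{L}'_\Q$ via the mutually inverse assignments $R \mapsto R \cap \bm{L}'_\Q$ and $L^\sharp \mapsto C(L^\sharp)$. For $L^\sharp$ self-dual of rank $6$, the reduction $L^\sharp/pL^\sharp$ is a nondegenerate quadratic $k'$-space, so, exactly as in the proof of Proposition \ref{prop:basic hodge}(4), its Clifford algebra is $M_8(k')$, and Nakayama's lemma shows $C(L^\sharp)$ is a maximal order; its stability under $'$ is immediate. Conversely, for a $'$-stable maximal $R$, the relation $x \circ x = Q(x)$ in $C(\bm{L})$ shows that $Q$ is $W'$-valued on $L^\sharp := R \cap \bm{L}'_\Q$, giving $L^\sharp \subset (L^\sharp)^\vee$ and $C(L^\sharp) \subset R$. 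Upgrading this containment to equality—equivalently, to self-duality of $L^\sharp$—is the principal obstacle; I would handle it by a volume comparison, exploiting that all maximal orders in the central simple algebra $C(\bm{L}'_\Q)$ share a common $W'$-module volume while the volume of $C(L^\sharp)$ explicitly measures the discriminant of $L^\sharp$, forcing equality of the two orders.

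Once the bijections of the previous two paragraphs are chained, the explicit formula $L^\sharp(D) = \End_{W'}(D) \cap \bm{L}'_\Q$ shows that their composition is precisely the map stated in the lemma, and the quotient by $p^\Z$ on the left-hand side emerges from the homothety ambiguity in recovering $D$ from its endomorphism order combined with the nearly self-duality constraint. The main obstacle I anticipate is the discriminant/volume bookkeeping that forces self-duality of $R \cap \bm{L}'_\Q$; the remaining steps are essentially a careful orchestration of Proposition \ref{prop:basic hodge}.
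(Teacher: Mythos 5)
Your strategy---routing the lemma through the correspondence between lattices (up to homothety) and maximal orders in $\End_{W'}(\bm{D}'_\Q) \cong C(\bm{L}'_\Q)$, matching near self-duality of $D$ with stability of the order under the canonical involution $'$, and then matching $'$-stable maximal orders with self-dual lattices via $R \mapsto R \cap \bm{L}'_\Q$---is a genuinely different route from the paper's, and several of the intermediate checks you describe are fine (that $'$ equals the Rosati involution, that $C(L^\sharp)$ is a $'$-stable maximal order with $C(L^\sharp)\cap\bm{L}'_\Q = L^\sharp$). However, there is a real gap exactly at the point you flag as the ``principal obstacle,'' and the volume argument you sketch does not close it.

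The trouble is that the volume comparison only goes the wrong way. From $L^\sharp := R \cap \bm{L}'_\Q$ you deduce $Q(L^\sharp) \subset W'$, hence $L^\sharp \subset (L^\sharp)^\vee$, hence $C(L^\sharp) \subset R$. Both are rank-$64$ $W'$-modules, so this gives $\mathrm{Vol}(C(L^\sharp)) \subset \mathrm{Vol}(R)$. If you write $L^\sharp = g\bm{L}'$ with $g \in \GL(\bm{L}'_\Q)$, then $\mathrm{Vol}(C(L^\sharp)) = \det(g)^{32}\,\mathrm{Vol}(C(\bm{L}'))$ and $\mathrm{Vol}(R) = \mathrm{Vol}(C(\bm{L}'))$ (all maximal orders being conjugate), so the inequality becomes $\mathrm{ord}_p(\det g) \geq 0$; but that is \emph{exactly} the content of $L^\sharp \subset (L^\sharp)^\vee$, which you already know. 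No contradiction is reached if $\mathrm{ord}_p(\det g) > 0$: in that case $C(L^\sharp)$ is simply a proper suborder of $R$, which is perfectly consistent with the hypotheses. To force $C(L^\sharp) = R$ (equivalently $L^\sharp$ self-dual) you would need a volume bound in the opposite direction, and your argument does not supply one. Concretely, if $L^\sharp/pL^\sharp$ is degenerate then $C(L^\sharp/pL^\sharp)$ has nontrivial radical, so $C(L^\sharp)\otimes k' \to R\otimes k'$ is not injective and $C(L^\sharp) \subsetneq R$; nothing in the Clifford-algebra bookkeeping rules this out a priori.

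The paper sidesteps the issue entirely by using transitivity: for nearly self-dual $D$ one writes $D = g\bm{D}'$ with $g \in \GU^0(\bm{D}'_\Q)$, and then $L^\sharp = g \action \bm{L}'$ is self-dual because $g\action$ is an orthogonal transformation of $\bm{L}'_\Q$ (this uses Proposition \ref{prop:gspin}); the converse is obtained by lifting $h \in \SO(\bm{L}'_\Q)$ with $L^\sharp = h\bm{L}'$ to $g \in \GU^0(\bm{D}'_\Q)$ and taking $D = g\bm{D}'$. Essentially, the paper proves the needed fact that $\GU^0(\bm{D}'_\Q)$ acts transitively on $'$-stable maximal orders, which is the ingredient your internal argument is missing. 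You could rescue your approach by establishing this transitivity directly (e.g.\ as a consequence of the fact that $\GSpin(\bm{L}'_\Q) \to \SO(\bm{L}'_\Q)$ is surjective and $\SO$ acts transitively on self-dual lattices in a split form), but at that point you would be reproducing the paper's mechanism rather than replacing it.
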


\begin{proof}
Start with a nearly self-dual lattice $D$, and set $L^\sharp=\{x\in \bm{L}_\Q' : xD \subset D\}$.
The condition $D^\vee = c D$ implies that there is some $g\in \GU^0( \bm{D}'_\Q )$
such that $D=g\bm{D}'$, and hence $L^\sharp=g\action \bm{L}'$.  As $g\action$ respects the 
quadratic form $Q$, the self-duality of $\bm{L}'$ implies the self-duality of $L^\sharp$.  Conversely, if we start with a 
self-dual $L^\sharp\subset \bm{L}'_\Q$, the Clifford algebra $C(L^\sharp)$ is a maximal order
in $C(\bm{L}'_\Q) \iso \End_{W'}(\bm{D}'_\Q)$, and so there is, up to scaling, a unique lattice $D\subset \bm{D}'_\Q$ satisfying 
\begin{equation}\label{integral clifford}
C(L^\sharp) = \End_W( D ).
\end{equation}
Choose any $h\in \SO(\bm{L}'_\Q)$ such that $L^\sharp=h\bm{L}'$, and lift $h$ to an element 
$g\in \GU^0(\bm{D}'_\Q)$.  By rescaling $g$  we may arrange to have $D=g \bm{D}'$, and the self-duality of 
$\bm{D}'$ implies  $D^\vee=\nu(g)^{-1} D$.  
\end{proof}

\begin{lemma}\label{lem:bijection lemma 3}
Suppose $D\subset\bm{D}_\Q'$ is nearly self-dual, $L^\sharp\subset\bm{L}_\Q'$ is self-dual, and 
$L^\sharp$ and $D$ are related by 
$L^\sharp = \{x\in \bm{L}_\Q' : xD \subset D\}$.
If $x\in L^\sharp/pL^\sharp$ is any nonzero isotropic vector, viewed as an endomorphism of $D/pD$ using (\ref{integral clifford}),
the kernel of $x$  is an $\co_E$-stable Lagrangian subspace with respect to $c\bm{\lambda}$.  Conversely, 
if $\mathscr{D}_1 \subset D/pD$ is an $\co_E$-stable Lagrangian subspace then 
$\{ x\in L^\sharp/pL^\sharp : x\mathscr{D}_1=0\}$ is an isotropic line in $L^\sharp/pL^\sharp$.  This construction establishes a bijection 
\[
\{ \mbox{isotropic lines in $L^\sharp/pL^\sharp$} \} \iso \{ \mbox{$\co_E$-stable Lagrangian subspaces in $D/pD$} \}.
\]
If $\mathscr{L}_1 \subset L^\sharp / pL^\sharp$ corresponds to $\mathscr{D}_1 \subset D/pD$ under this bijection, then
\begin{equation}\label{perp lemma}
\mathscr{L}_1^\perp = \{ x\in L^\sharp/pL^\sharp : x \cdot \mathscr{D}_1 \subset \mathscr{D}_1 \}.
\end{equation}
\end{lemma}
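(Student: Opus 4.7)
The plan is to leverage the Clifford isomorphism $C(L^\sharp) \cong \End_{W'}(D)$ of Proposition~\ref{prop:basic hodge}(4), which reduces modulo $p$ to an identification $C(L^\sharp/pL^\sharp) \cong \End_{k'}(D/pD)$ compatible with the $\Z/2$-grading. Thus elements of $L^\sharp/pL^\sharp$ act as odd endomorphisms, swapping the two isotypic components $\epsilon_0(D/pD)$ and $\epsilon_1(D/pD)$, each of $k'$-dimension $4$. The $\co_E$-compatibility~(\ref{linear polarization}) forces $\bm{\lambda}$ to vanish on each component and to pair them into perfect duality. The formula $\langle xa, b\rangle = -\overline{\langle a, xb\rangle}$ from the proof of Proposition~\ref{prop:gspin}, together with $\overline{\delta} = -\delta$, yields the self-adjointness $\bm{\lambda}(xa, b) = \bm{\lambda}(a, xb)$; and the Hodge identity $x\iota(\alpha) = \iota(\overline{\alpha})x$ forces $\ker(x)$ to be $\co_E$-stable automatically.

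Write $x_0, x_1$ for the restrictions of $x \in L^\sharp/pL^\sharp$ to $\epsilon_0(D/pD)$ and $\epsilon_1(D/pD)$. Using $\epsilon_1(D/pD) \cong \epsilon_0(D/pD)^*$ via $\bm{\lambda}$, self-adjointness combined with the antisymmetry of $\bm{\lambda}$ shows $x_0$ is an alternating form. Since $L^\sharp$ (an image of $\bm{L}$ under $\GU^0$-conjugation, by Lemma~\ref{lem:bijection lemma 1}) is identified with the Hodge-fixed part of $\bigwedge^2_E D = \bigwedge^2\epsilon_0 D \oplus \bigwedge^2\epsilon_1 D$, and the Hodge star exchanges these two summands isomorphically, the map $x \mapsto x_0$ is an injection into $\bigwedge^2\epsilon_0(D/pD)^*$; both spaces have $k'$-dimension $6$, so it is an isomorphism. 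The Clifford relation $x^2 = Q(x)$ now becomes $x_1 x_0 = Q(x)\cdot \mathrm{id}$, and taking determinants (using $\det(x_0) = \mathrm{Pf}(x_0)^2$) shows that $Q$ corresponds, up to a nonzero scalar in $k'$, to the Pfaffian on $\bigwedge^2\epsilon_0(D/pD)^*$. The \emph{main obstacle} of the proof is this compatibility of quadratic forms; it can be verified directly using the basis from Lemma~\ref{lem:standard coords} and (\ref{basis hodge}), and it is essentially the same content as the exceptional isomorphism $\Spin(6) \cong \SL_4$ implicit in Proposition~\ref{prop:gspin}.

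With this in hand, a nonzero isotropic $x$ has $x_0$ of Pfaffian zero hence of rank exactly $2$, so $\ker(x_0) \subset \epsilon_0(D/pD)$ is $2$-dimensional; the same reasoning applies to $x_1$. From $x^2 = 0$ we get $\mathrm{im}(x_1) \subset \ker(x_0)$, which is an equality by dimensions, and symmetrically $\mathrm{im}(x_0) = \ker(x_1)$. Writing $v = x_1 v' \in \ker(x_0)$ and $w = x_0 w' \in \ker(x_1)$, self-adjointness together with $x_1 x_0 = 0$ gives $\bm{\lambda}(v, w) = \bm{\lambda}(v', x_1 x_0 w') = 0$, so $\ker(x) = \ker(x_0) \oplus \ker(x_1)$ is Lagrangian of dimension $4$. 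Conversely, any $\co_E$-stable Lagrangian decomposes as $\mathscr{D}_1 = P \oplus P^\perp$ with $P \subset \epsilon_0(D/pD)$ of dimension $2$; under the isomorphism $L^\sharp/pL^\sharp \cong \bigwedge^2\epsilon_0(D/pD)^*$, the condition $xP = 0$ cuts out the $1$-dimensional line of alternating forms whose radical contains $P$, all of which are isotropic, and whose nonzero generator satisfies $xP^\perp = 0$ automatically (since $\mathrm{im}(x_0) = P^\perp$). This establishes the claimed bijection.

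Finally, for (\ref{perp lemma}), fix $y \in L^\sharp/pL^\sharp$ generating $\mathscr{L}_1$, so that $\mathscr{D}_1 = \ker(y)$. For any $x \in L^\sharp/pL^\sharp$ and any nonzero $v \in \mathscr{D}_1$, the scalar $[x,y] = xy + yx \in k'$ acts by $[x,y]v = yxv$ (since $yv = 0$). Hence $[x,y] = 0$ if and only if $xv \in \ker(y) = \mathscr{D}_1$; since $\mathscr{D}_1$ is spanned by the various $v$, this gives $\mathscr{L}_1^\perp = \{x \in L^\sharp/pL^\sharp : x\mathscr{D}_1 \subset \mathscr{D}_1\}$.
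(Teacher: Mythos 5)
Your proposal is correct, and it takes a genuinely different route from the paper's own proof. The paper works entirely inside the abstract Clifford algebra $C(\mathscr{L})\cong M_8(k')$: for a nonzero isotropic $x$ it shows that the kernel and image of left multiplication by $x$ coincide (first on $C(\mathscr{L})$ itself, then on the unique simple module $\mathscr{D}$), so $\ker(x)$ is automatically $4$-dimensional; injectivity of $\mathscr{L}_1\mapsto\mathscr{D}_1$ is argued through a careful choice of a maximal isotropic triple $x,y,z$ and comparison of left ideals $C(\mathscr{L})xyz$; surjectivity is deduced from the transitivity of $\GU^0(\mathscr{D})$ on $\co_E$-stable Lagrangian subspaces via the very isomorphism $\GSpin(\mathscr{L})\cong\GU^0(\mathscr{D})$ of Proposition~\ref{prop:gspin}; and the orthogonality statement (\ref{perp lemma}) is obtained by comparing left ideals $xyC(\mathscr{L})$ and $yC(\mathscr{L})$. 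You instead make the exceptional isomorphism $D_3=A_3$ concrete by decomposing the odd action into a pair $(x_0,x_1)$ of alternating forms on the two $\co_E$-isotypic $4$-spaces and identifying $Q$ with the Pfaffian on $\bigwedge^2\epsilon_0(D/pD)^*$, from which the rank-$2$/Lagrangian dichotomy is a linear-algebra exercise. This buys explicitness: the bijection becomes visible ($\mathscr{L}_1\leftrightarrow$ radical of $x_0$), and your proof of (\ref{perp lemma}) via the scalar identity $[x,y]v=yxv$ for $v\in\ker(y)$ is arguably cleaner than the paper's ideal-theoretic argument. The cost is that you must actually establish the Pfaffian compatibility. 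You flag this as the main obstacle and wave at a basis computation; note that the determinant argument you sketch proves only $\mathrm{Pf}(x_0)\mathrm{Pf}(x_1)=\pm Q(x)^2$, which is not yet the claim as stated. However, this is enough: over the algebraically closed field $k'$ the rank-$6$ nondegenerate quadratic form $Q$ is an irreducible polynomial, so $Q^2=\pm\mathrm{Pf}(x_0)\mathrm{Pf}(x_1)$ forces each of $\mathrm{Pf}(x_0)$ and $\mathrm{Pf}(x_1)$ to be a scalar multiple of $Q$. Adding that one line would close the gap without any basis computation. One should also note explicitly, as you implicitly use, that injectivity of your bijection is free because distinct lines in $\bigwedge^2\epsilon_0(D/pD)^*$ of rank-$2$ forms have distinct $2$-dimensional radicals, and the radical $P$ determines $\mathscr{D}_1=P\oplus P^\perp$.
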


\begin{proof}
Abbreviate $\mathscr{L}=L^\sharp/pL^\sharp$ and $\mathscr{D}=D/pD$, so that $\mathscr{D}$ is the unique simple left
module over the Clifford algebra $C(\mathscr{L}) \iso M_8(k')$.  In particular 
$C(\mathscr{L}) \iso \mathscr{D}^8$ as left $C(\mathscr{L})$-modules.  If $x\in \mathscr{L}$ is 
any nonzero isotropic vector,  the kernel and image of left multiplication by $x$ on 
$C(\mathscr{L})$ are equal, and hence the kernel and image of 
$x\in \End(\mathscr{D})$ are also equal.  In particular $\ker(x)$ has dimension $4$.  
The relation $\alpha x=x\overline{\alpha}$ 
for all $\alpha\in \co_E$ shows that $\ker(x)$
is $\co_E$-stable, and the relation $(c\bm{\lambda}) ( x s,t) = (c\bm{\lambda})( s,xt)$ 
implies that $\ker(x)=x\mathscr{D}$ is  totally isotropic.

If $x,y\in \mathscr{L}$  are  nonzero isotropic vectors with $\ker(x)=\ker(y)$ then, from the discussion above, 
$\ker(x)=y\mathscr{D}$ and $\ker(y)=x\mathscr{D}$.  In particular $[x,y]=x\circ y+y\circ x=0$.  If 
$x$ and $y$ are not colinear then (after possibly extending scalars) 
we can find a   $z\in \mathscr{L}$ such that $k'x+k'y+k'z$ is a maximal
isotropic subspace of $\mathscr{L}$.  The left ideal $C(\mathscr{L})xyz$ has dimension eight as a $k$-vector space,
and so we must have $\mathscr{D}\iso C(\mathscr{L})xyz$ as left $C(\mathscr{L})$-modules.  But it is easy to see 
by direct calculation that 
the kernels of left multiplication by $x$ and $y$ on $C(\mathscr{L})xyz$ are different.  This contradiction shows that 
$x$ and $y$ are colinear, and so $x\mapsto \ker(x)$ establishes an injection $\mathscr{L}_1\mapsto \mathscr{D}_1$
from the set of isotropic lines in $\mathscr{L}$
to the set of $\co_E$-stable Lagrangian subspaces in $\mathscr{D}$.  

Endow $\mathscr{D}$ with the $\co_E\otimes_{\Z_p} k'$-valued Hermitian form induced by $c\langle\cdot,\cdot\rangle$.
Exactly as in Proposition \ref{prop:gspin} there is an isomorphism of $k'$-groups
$
\GSpin(\mathscr{L}) \iso \GU^0(\mathscr{D}).
$
This isomorphism is compatible, in the obvious sense, with the map $\mathscr{L}_1\mapsto \mathscr{D}_1$,
and so the image of the map is stable under the action of $\GU^0(\mathscr{D})$.  But $\GU^0(\mathscr{D})$
acts transitively on the set of $\co_E$-stable Lagrangian subspaces in $\mathscr{D}$, proving surjectivity.

Finally, we verify (\ref{perp lemma}).
If $\mathscr{L}_1$ corresponds to $\mathscr{D}_1$ under our bijection, then 
$\mathscr{D}_1= \ker(y)=y\mathscr{D}$ for any nonzero  $y\in \mathscr{L}_1$,
and an elementary argument (using $k'\cap yC(\mathscr{L})=0$ for the middle $\Longleftarrow$)
shows that 
\[
x\perp y \iff  xy+yx=0\iff xy C(\mathscr{L}) \subset yC(\mathscr{L})
\iff  xy\mathscr{D} \subset y\mathscr{D}_1.
\]
\end{proof}

\begin{proof}[Proof of Theorem \ref{thm:the bijection}]   
Suppose first that $D$ is a Dieudonn\'e lattice.  Using the relations $D=F_*(F^{-1}(D))$ and 
$\langle Fv,Fw\rangle = p\langle v,w\rangle^\sigma$, one can show that the near self-duality of $D$
implies that $D_1=F^{-1}(pD)$ is also nearly self-dual.   Lemma \ref{lem:bijection lemma 1} then implies that the lattices
\begin{align}
L & = \{x\in \bm{L}_\Q' : x D_1\subset D_1\} \label{stabilizers} \\
L^\sharp & =\{x\in \bm{L}_\Q' : x D\subset D\} \nonumber
\end{align}
are self-dual.   The relation $\Phi(x) \circ F = F\circ x$ for all $x\in \bm{L}_\Q'$ implies that
$\Phi_*(L) \subset L^\sharp$, and equality must hold as
\[
\mathrm{Vol}( \Phi_*(L) ) = \mathrm{Vol}(L) = \mathrm{Vol}(L^\sharp).
\]
By Proposition \ref{prop:dieudonne signature}   the $k$-subspace $D_1/pD \subset D/pD$ is  $\co_E$-stable 
and Lagrangian, and so it follows from Lemma \ref{lem:bijection lemma 3}  that
\[
\mathscr{L}_1 = \{  x\in L^\sharp/pL^\sharp : x (D_1/pD) =0 \}
\]
is an isotropic line in $L^\sharp/pL^\sharp$ with orthogonal complement
\[
\mathscr{L}_1^\perp = \{ x\in L^\sharp/pL^\sharp : x (D_1/pD) \subset D_1/pD \}.
\]
On the other hand, $L \cap L^\sharp=\{ x\in L^\sharp : xD_1 \subset D_1\}$, and so
\[
(L+ L^\sharp)/L \iso L^\sharp/(L\cap L^\sharp) \iso  (L^\sharp/pL^\sharp)/\mathscr{L}_1^\perp
\]
has length $1$.

Now suppose we start with a pair of self-dual lattices $(L,L^\sharp)$ such that
$\Phi_*(L)=L^\sharp$ and $(L+L^\sharp)/L$ has length $1$.  By  Lemma
\ref{lem:bijection lemma 1} there are unique (up to scaling) nearly self-dual lattices $D_1$ and $D$ in $\bm{D}_\Q'$
satisfying  (\ref{stabilizers}).  Set $L_0=L\cap L^\sharp$, so that $L^\sharp/L_0$ has length $1$,
and pick any nonzero $y\in L^\sharp/L_0$.  The Clifford algebra $C(L^\sharp)$ satisfies
\[
C(L^\sharp)=C(L_0) + y C(L_0),
\]
where $C(L_0) \subset C(L^\sharp)$ is the $W$-subalgebra generated by $L_0$, and so
\[
C( L^\sharp )D_1 = C(L_0)D_1 + y C(L_0)D_1 = D_1+y D_1.
\]
This implies
$
C(L^\sharp)pD_1 \subset D_1 \subset C(L^\sharp) D_1.
$
The self-duality of $L^\sharp$ implies that $C(L^\sharp)$ is a maximal order in 
$
C(\bm{L}_\Q') = \End_{W'}(\bm{D}_\Q'),
$ 
and so we must have $C(L^\sharp)=\End_{W'}(D)$.  As the lattice $C(L^\sharp)D_1$ is obviously stabilized by $C(L^\sharp)$, it must have the form
$C(L^\sharp)D_1=p^k D$ for some integer $k$.    Thus after rescaling $D_1$ we may assume that $C(L^\sharp)D_1=D$ and 
 \[
 p D\subset D_1 \subset D.
 \]   
 The relation $\Phi(x) \circ F = F\circ x$  implies
 \[
 C(L^\sharp) F_*(D_1) =C(\Phi_*(L))  F_*(D_1) =
 F_* ( C(L)D_1) =F_*(D_1),
 \]
and so $F_*(D_1)=p^k D$ for some $k$.  Combining
 \[
p^{8k} \cdot \mathrm{Vol}(D) = \mathrm{Vol}(F_*(D_1)) = p^4\cdot  \mathrm{Vol}(D_1)
 \]
and 
\[
p^8 \cdot \mathrm{Vol}(D) \subset \mathrm{Vol}(D_1) \subset \mathrm{Vol}(D)
\]
shows that in fact  $F_*(D_1)=p D$.  The relations $D_1=F^{-1}(pD)$ and $F_*(F^{-1}(D))=D$
follow easily from this,  proving that $D$ is a Dieudonn\'e lattice.

It only remains to prove  that $L+L^\sharp = \{ x\in \bm{L}_\Q' : x D_1 \subset D \}.$
The inclusion $L+L^\sharp \subset \{ x\in \bm{L}_\Q' : x D_1 \subset D \}$ is obvious from (\ref{stabilizers}). 
On the other hand, each side contains $L^\sharp$ with quotient having length $1$
(for the right hand side this follows from Proposition \ref{prop:dieudonne signature} and  Lemma \ref{lem:bijection lemma 3}).  Thus equality holds.
 \end{proof}


\subsection{Vertex lattices and the Bruhat-Tits stratification}
\label{ss:vertex}


If we start with a $k$-point $(G,\lambda,i,\varrho) \in \mathscr{M}(k)$ and let $D$ be the covariant 
Dieudonn\'e module of $G$, then  $\varrho(D) \subset \bm{D}_\Q$
is a Dieudonn\'e lattice.  This construction is simply the $k'=k$ case of the bijection
\[
\mathscr{M}(k) \iso \{ \mbox{Dieudonn\'e lattices in $\bm{D}_\Q$} \} 
\]
of Corollary \ref{cor:dieudonne bijection}.
Combining this with Corollary \ref{cor:the bijection} yields a bijection
\begin{equation}\label{pointwise bijection}
\mathscr{N}(k)  \iso   \{ \mbox{special lattices in $\bm{L}_\Q$} \}
\end{equation}
defined by
\[
(G,\lambda,i,\varrho) \mapsto
 \{ x\in \bm{L}_\Q :  x  \varrho(D_1) \subset \varrho(D_1)  \},
 \]
 where $D_1=VD$. Moreover,  Theorem \ref{thm:the bijection} implies 
 that the special lattice  
 \[
 L= \{ x\in \bm{L}_\Q :  x  \varrho(D_1) \subset \varrho(D_1)  \}
 \] 
 satisfies
 \[
 \Phi(L) =  \{ x\in \bm{L}_\Q :  x  \varrho(D) \subset \varrho(D)  \}.
 \]

The next step is to show that the special lattices come in natural families, indexed by certain \emph{vertex lattices}
in the $\Q_p$-quadratic space $\bm{L}_\Q^\Phi$.  Using this and the bijection (\ref{pointwise bijection}), we will then
express the reduced scheme underlying $\mathscr{N}$ as a union of closed subvarieties indexed by vertex lattices.

\begin{definition}
A \emph{vertex lattice} is a $\Z_p$-lattice $\Lambda \subset \bm{L}_\Q^\Phi$  such that 
\[
p \Lambda \subset\Lambda^\vee \subset \Lambda .
\]
The \emph{type} of $\Lambda$ is  $t_\Lambda = \mathrm{dim}_k ( \Lambda / \Lambda^\vee ).$
\end{definition}

\begin{lemma}
The type of a vertex lattice is either $2$, $4$, or $6$.
\end{lemma}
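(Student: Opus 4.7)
The plan is to invoke the two invariants of the $\Q_p$-quadratic space $\bm{L}_\Q^\Phi$ recorded in Proposition \ref{prop:hasse}---its determinant class $[-\Delta]\in \Q_p^\times/(\Q_p^\times)^2$ and its Hasse invariant $-1$---to exclude $t_\Lambda \in \{0,1,3,5\}$.  The upper bound $t_\Lambda \leq 6$ is automatic, since $\Lambda/\Lambda^\vee$ is a quotient of a free $\Z_p$-module of rank $6$, and the defining condition $p\Lambda \subset \Lambda^\vee$ ensures that $\Lambda/\Lambda^\vee$ is killed by $p$ and hence is a $k$-vector space of some dimension $t_\Lambda \in \{0,1,\ldots,6\}$.

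For the parity, I would pick a $\Z_p$-basis $e_1,\ldots,e_6$ of $\Lambda$ and let $G=([e_i,e_j])\in M_6(\Q_p)$ be its Gram matrix; note $G$ need not have entries in $\Z_p$, only in $p^{-1}\Z_p$.  The identification $\Lambda^\vee = G^{-1}\Lambda$ shows that the hypothesis $\Lambda^\vee \subset \Lambda$ is equivalent to $G^{-1} \in M_6(\Z_p)$, and a Smith normal form argument then gives
\[
t_\Lambda \;=\; v_p(\det G^{-1}) \;=\; -v_p(\det G).
\]
On the other hand, the square class of $\det G$ in $\Q_p^\times/(\Q_p^\times)^2$ is an invariant of the quadratic space $\bm{L}_\Q^\Phi$, independent of the lattice $\Lambda$ or the basis chosen, and by Proposition \ref{prop:hasse} it equals the unit class $[-\Delta]$.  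Hence $v_p(\det G)$ is even, and so is $t_\Lambda$; this rules out the odd values $t_\Lambda \in \{1,3,5\}$.

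It only remains to exclude $t_\Lambda=0$, i.e.\ the possibility that $\Lambda$ itself is self-dual.  Here I would use that, since $p$ is odd, every unimodular $\Z_p$-quadratic lattice admits an orthogonal basis in which the Gram matrix is diagonal with entries $a_1,\ldots,a_6\in \Z_p^\times$; its Hasse invariant $\prod_{i<j}(a_i,a_j)_p$ is then $+1$ because the Hilbert symbol vanishes on pairs of units at odd primes.  This contradicts the $-1$ Hasse invariant of $\bm{L}_\Q^\Phi$ from Proposition \ref{prop:hasse}, so $t_\Lambda \neq 0$.  The only delicate bookkeeping in the whole argument is the asymmetry between $G$ and $G^{-1}$ in the parity step, which is precisely what lets the non-integrality of $G$ coexist with $\Lambda^\vee \subset \Lambda$; beyond this, everything is standard local quadratic-forms theory.
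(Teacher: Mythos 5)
Your proof is correct and takes essentially the same route as the paper's: the paper simply asserts that Proposition \ref{prop:hasse} forces $\ord_p(\det(\Lambda))$ to be even (hence the type is even) and then rules out type $0$ by the same Hasse-invariant argument you give. You have fleshed out the parity step with the explicit Gram-matrix computation $t_\Lambda = -v_p(\det G)$ and the observation that the square class of $\det G$ is the unit class $[-\Delta]$; the paper leaves those details to the reader.
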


\begin{proof}
Let $\Lambda$ be a vertex lattice.  Proposition \ref{prop:hasse} implies that $\ord_p( \det( \Lambda ))$ is even, 
from which it follows that the type of $\Lambda$ is also even.  If $\Lambda$ has type $0$ then $\Lambda$ is 
self-dual, and hence admits a  basis such that the matrix of $Q$ is diagonal with diagonal entries in $\Z_p^\times$.
But this implies that  $\bm{L}_\Q^\Phi$ has Hasse invariant  $1$, contradicting Proposition \ref{prop:hasse}.
\end{proof}

The proof of the following proposition is identical to that  of  Proposition 4.1 of \cite{RTW}.  
See also Lemma 2.1 of \cite{Vol}.

\begin{proposition}\label{prop:zink}
Let $L\subset \bm{L}_\Q$ be a special lattice, and define 
\[
L^{(r)}= L + \Phi(L) + \cdots + \Phi^r(L).
\]  
There is an integer $d\in \{ 1,2,3\}$ such that 
\[
L=L^{(0)} \subsetneq L^{(1)} \subsetneq \cdots\subsetneq L^{(d)} =L^{(d+1)}.
\]
For each $L^{(r)} \subsetneq L^{(r+1)}$ with $0\le r<d$ the quotient $L^{(r+1)}/L^{(r)}$ 
is annihilated by $p$, and satisfies $\mathrm{dim}_k(L^{(r+1)}/L^{(r)}) = 1$.  Moreover, 
\[
\Lambda_L = \{ x\in L^{(d)}  : \Phi(x) = x\}
\]
is a vertex lattice of type $2d$, and satisfies  
$
\Lambda_L^\vee =\{ x\in L : \Phi(x) = x \}.
$  
\end{proposition}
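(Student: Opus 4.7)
The plan is to establish the four conclusions in three logical stages.

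First, I will prove by induction on $r$ that $L^{(r+1)}/L^{(r)}$ has $W$-length at most one. The base case $r=0$ is the defining condition of a special lattice. For the inductive step, writing $L^{(r+1)} = L^{(r)} + \Phi(L^{(r)})$ and noting that $\Phi(L^{(r-1)}) = \Phi(L)+\cdots+\Phi^r(L)$ is contained in both $L^{(r)}$ and $\Phi(L^{(r)})$ yields
\[
L^{(r+1)}/L^{(r)} \iso \Phi(L^{(r)})/\big(L^{(r)}\cap \Phi(L^{(r)})\big),
\]
whose length is bounded by $\length\big(\Phi(L^{(r)})/\Phi(L^{(r-1)})\big) = \length\big(L^{(r)}/L^{(r-1)}\big)$, itself at most $1$ by induction. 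In particular each nonzero quotient is isomorphic to $k = W/pW$.

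Next, once some quotient $L^{(r+1)}/L^{(r)}$ vanishes, the inclusion $\Phi(L^{(r)})\subset L^{(r)}$ is an equality by volume considerations, and all subsequent quotients also vanish. Let $d$ be the smallest such $r$. The lower bound $d\ge 1$ follows because $L$ itself cannot be $\Phi$-stable: otherwise $L = L^\Phi\otimes_{\Z_p} W$ with $L^\Phi$ a self-dual $\Z_p$-lattice in $\bm{L}_\Q^\Phi$, contradicting the Hasse invariant computation of Proposition \ref{prop:hasse}. Slope-zero isocrystal theory then gives $L^{(d)} = \Lambda_L \otimes_{\Z_p} W$ for $\Lambda_L=(L^{(d)})^\Phi$. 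Taking orthogonal duals, the $\Phi$-stable lattice $N_d := \bigcap_{i\le d}\Phi^i L = (L^{(d)})^\vee$ equals $\Lambda_L^\vee\otimes W \subset L^\vee = L$, so $\Lambda_L^\vee\subset \Lambda_L$. Any $\Phi$-stable $W$-sublattice $M$ of $L$ satisfies $M = \Phi^i M\subset \Phi^i L$ for all $i$ and hence $M\subset N_d$, identifying $N_d$ as the largest $\Phi$-stable sublattice of $L$ and giving $\Lambda_L^\vee = \{x\in L : \Phi(x)=x\}$. Dualising the ascending chain produces a descending chain $L=N_0\supsetneq N_1\supsetneq\cdots\supsetneq N_d$ with length-one steps, so $\length_k(L^{(d)}/N_d) = 2d$ and therefore $\length_{\Z_p}(\Lambda_L/\Lambda_L^\vee)=2d$.

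Finally, the remaining vertex condition $p\Lambda_L\subset \Lambda_L^\vee$ is equivalent to $pL^{(d)}\subset N_d$, i.e.\ $p\Phi^i L\subset \Phi^j L$ for all $0\le i,j\le d$. Self-duality of the lattices $\Phi^i L$ makes this relation symmetric in $(i,j)$ and reduces it to $p\Phi^m L\subset L$ for all $|m|\le d$. The case $|m|=1$ follows from the elementary-divisor description $(1,0,0,0,0,-1)$ of the adjacent pair $(L,\Phi L)$ furnished by the special-lattice hypothesis. The main obstacle is the case $|m|\ge 2$: naive iteration only gives $\Phi^m L\subset p^{-m}L$, which is too weak, so the argument must exploit the fact that the stabilisation $\Phi(L^{(d)}) = L^{(d)}$ confines the entire orbit $\{\Phi^i L\}_{i\in \Z}$ to the bounded region of the Bruhat--Tits building of $\SO(\bm{L}_\Q^\Phi)$ determined by $\Lambda_L$, and combine this with the $\Q_p$-rational structure, exactly as in the proof of \cite[Proposition 4.1]{RTW}. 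Once the vertex condition is in place, $\Lambda_L/\Lambda_L^\vee$ is an $\F_p$-vector space of dimension $2d$, bounded above by $\mathrm{rank}_{\Z_p}\Lambda_L = 6$, forcing $d\in\{1,2,3\}$.
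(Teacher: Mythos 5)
Your proposal is correct and follows essentially the same route as the paper, which simply asserts that the proof is identical to \cite[Proposition 4.1]{RTW} (see also \cite[Lemma 2.1]{Vol}). You have fleshed out the surrounding structure carefully — the length-one-step induction, the identifications $L^{(d)}=\Lambda_L\otimes W$ and $N_d=(L^{(d)})^\vee=\Lambda_L^\vee\otimes W$, the equality $\Lambda_L^\vee=L^\Phi$, and the lower bound $d\geq 1$ via Proposition \ref{prop:hasse} — and you correctly isolate the one genuinely nontrivial step, the vertex inclusion $p\Lambda_L\subset\Lambda_L^\vee$ for distance $\geq 2$, deferring to the same RTW argument that the paper itself cites.
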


By (\ref{special endo}) each vertex lattice $\Lambda$ determines a collection of quasi-endomorphisms 
$
\Lambda^\vee \subset \End(\bm{G})_\Q.
$  
 Define a closed formal subscheme
$
\widetilde{\mathscr{M}}_\Lambda \subset \mathscr{M}
$
as the locus of points $(G,\iota, \lambda,\varrho)$ such that 
\[
\varrho^{-1} \Lambda^\vee \varrho  =  \{  \varrho^{-1} \circ x \circ \varrho : x\in \Lambda^\vee  \} \subset \End(G) .
\]
In other words, the locus where the   quasi-endomorphisms $\varrho^{-1} \Lambda^\vee \varrho$  of $G$
are actually integral.  Set
$
\widetilde{\mathscr{N}}_\Lambda = p^\Z \backslash \widetilde{\mathscr{M}}_\Lambda,
$
and let $\mathscr{N}_\Lambda$ be the reduced $k$-scheme underlying $\widetilde{\mathscr{N}}_\Lambda$.
The  bijection (\ref{pointwise bijection}) identifies
\begin{align}\label{special bijection}
\mathscr{N}_\Lambda(k)  
&=  \{ \mbox{special lattices  $L$ such that $\Lambda^\vee \subset \Phi(L)$} \} \\
&=  \{ \mbox{special lattices  $L$ such that $\Lambda^\vee \subset L$} \}  \nonumber \\
&=  \{ \mbox{special lattices  $L$ such that $\Lambda_L \subset \Lambda$} \}. \nonumber
\end{align}
The same proof used in \cite[Proposition 4.3]{RTW} shows that
\[
\mathscr{N}_{\Lambda_1} \cap \mathscr{N}_{\Lambda_2} = 
\begin{cases}
\mathscr{N}_{ \Lambda_1 \cap \Lambda_2} & \mbox{if $\Lambda_1\cap \Lambda_2$ is a vertex lattice} \\
\emptyset & \mbox{otherwise}
\end{cases}
\]
where the left hand side is understood to mean the reduced subscheme underlying the scheme-theoretic 
intersection.

\begin{proposition}\label{prop:proj}
Each $k$-scheme $\mathscr{N}_\Lambda$ is projective.
\end{proposition}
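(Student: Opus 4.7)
The plan is to exhibit $\mathscr{N}_\Lambda$ as a closed subscheme of a projective orthogonal Grassmannian via a period-type morphism.

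First I would analyze $k$-points. By (\ref{special bijection}), the set $\mathscr{N}_\Lambda(k)$ corresponds to special lattices $L \subset \bm{L}_\Q$ with $\Lambda^\vee \subset L$; the self-duality $L = L^\vee$ then forces $L \subset \Lambda$ by dualizing, so $\Lambda^\vee \subset L \subset \Lambda$. Thus $\overline{L} := L/\Lambda^\vee$ is a $d_\Lambda$-dimensional Lagrangian $k$-subspace of the quadratic space $V_\Lambda := (\Lambda/\Lambda^\vee) \otimes_{\F_p} k$ of dimension $t_\Lambda = 2d_\Lambda$, where the induced form on $V_\Lambda$ comes from $p^{-1}Q$ restricted to $\Lambda$. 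This defines a set-theoretic map from $\mathscr{N}_\Lambda(k)$ to the orthogonal Grassmannian $\mathrm{OGr}(V_\Lambda)(k)$ parametrizing Lagrangian subspaces of $V_\Lambda$.

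Next I would promote this to a morphism of $k$-schemes $\pi \colon \mathscr{N}_\Lambda \to \mathrm{OGr}(V_\Lambda)$. Given $(G,\iota,\lambda,\varrho) \in \mathscr{N}_\Lambda(S)$ over a reduced $k$-scheme $S$, the lattice $\Lambda^\vee$ consists of genuine endomorphisms of $G$. Using Zink's theory of windows (as in the proof of Theorem \ref{thm:the bijection}), one extracts from the filtration $M_1(G) \subset M(G)$ a natural $\co_S$-submodule of $V_\Lambda \otimes_k \co_S$, roughly speaking the image of those $x \in \Lambda^\vee$ acting trivially on $M_1(G)/pM(G)$. The $k$-point analysis above, combined with the determinant condition (\ref{kottwitz}) and local trivialization arguments, forces this submodule to be a locally free, isotropic rank-$d_\Lambda$ direct summand, i.e.\ a Lagrangian.

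Finally I would argue that $\pi$ is a closed immersion on the reduced subscheme $\mathscr{N}_\Lambda$. Injectivity on $k$-points follows from the bijection Corollary \ref{cor:the bijection}: the image $\overline{L}$ recovers $L$ uniquely, and hence the underlying Dieudonn\'e lattice. Injectivity on tangent spaces is the heart of the matter: by Grothendieck--Messing deformation theory, a first-order deformation of $(G,\iota,\lambda)$ respecting the $\Lambda^\vee$-action is pinned down by a lift of the Hodge filtration compatible with $\Lambda^\vee$, and this is precisely the data of a lift of $\overline{L}$ inside $V_\Lambda \otimes_k \co_{S[\epsilon]}$; here the exceptional isomorphism of Proposition \ref{prop:gspin} is essential, because it guarantees that the extra $\Lambda^\vee$-action imposes no further conditions beyond those already encoded by the Lagrangian subspace. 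Since $\mathrm{OGr}(V_\Lambda)$ is a projective $k$-scheme, it will follow that $\mathscr{N}_\Lambda$ is projective. The main obstacle is this last deformation-theoretic step, specifically verifying rigorously that the Grothendieck--Messing criterion together with the Clifford-algebra identification of Proposition \ref{prop:basic hodge} reduces the deformation problem for the enriched $p$-divisible group to the purely linear-algebraic deformation of the Lagrangian $\overline{L}$.
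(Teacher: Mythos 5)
Your proposal is essentially an attempt to prove Theorem~\ref{thm:comparison} (the isomorphism $\mathscr{N}_\Lambda \iso \mathscr{X}_\Lambda$) head-on, and then read off projectivity from that. This is a genuinely different route from the paper's proof of Proposition~\ref{prop:proj}, and it has a gap that the paper's logical ordering is specifically designed to avoid.

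The gap is properness, not deformation theory. You identify the ``main obstacle'' as the Grothendieck--Messing tangent-space computation, but even granting that step in full, what you get is that $\pi\colon \mathscr{N}_\Lambda \to \mathrm{OGr}(V_\Lambda)$ is injective on $k'$-points for all $k'$ and unramified, hence a monomorphism. A monomorphism of finite type into a projective scheme is only a locally closed immersion; to upgrade it to a closed immersion (and hence conclude projectivity of the source) you need $\pi$ to be proper, or equivalently you need $\mathscr{N}_\Lambda$ to be quasi-compact and universally closed. Nothing in your argument addresses this: the Rapoport--Zink space $\mathscr{M}$ is only \emph{locally} of finite type over $\mathrm{Spf}\,W$, so the closed formal subscheme $\widetilde{\mathscr{M}}_\Lambda$ is not automatically quasi-compact, and a priori $\mathscr{N}_\Lambda$ could have infinitely many irreducible components, in which case $\pi$ would not be a closed immersion no matter how good the infinitesimal analysis is.

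This is not a peripheral worry --- it is exactly the content of the proposition. The paper's proof is a bounding argument: it introduces the $W$-order $R_\Lambda \subset \End_W(\bm{D}_\Q)$ generated by $\Lambda^\vee$, embeds it in a maximal order $\tilde R_\Lambda$ with $p^M \tilde R_\Lambda \subset R_\Lambda$ (using the Clifford isomorphism of Proposition~\ref{prop:basic hodge} to see that $R_\Lambda$ is a lattice), and deduces that every Dieudonn\'e lattice $D$ coming from a point of $\mathscr{N}_\Lambda$ satisfies $p^a\bm{D} \subset D \subset p^b\bm{D}$ for fixed $a,b$. Projectivity then follows from \cite[Corollary~2.29]{RZ}. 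Note that the paper's proof of Theorem~\ref{thm:comparison} --- which is where your morphism $\pi$ actually appears --- explicitly cites Proposition~\ref{prop:proj} to assert that $\pi$ is proper before invoking Zariski's main theorem. So your proposed argument would, if pushed through honestly, need Proposition~\ref{prop:proj} as an input rather than yielding it as an output. To repair the proposal you would have to supply an independent proof of quasi-compactness (for instance the lattice-bounding argument itself, or a direct finiteness statement about the set of special lattices containing a fixed $\Lambda^\vee$), at which point you would essentially have reproduced the paper's proof.
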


\begin{proof}
Let $R_\Lambda$ be the $W$-subalgebra of $\End_W(\bm{D}_\Q)$ generated by $\Lambda^\vee$,
and let $\tilde{R}_\Lambda$ be a maximal order in $\End_W(\bm{D}_\Q)$ containing $R_\Lambda$. 
It follows from the isomorphism $C(\bm{L}_\Q) \iso \End_W(\bm{D}_\Q)$ of Proposition \ref{prop:basic hodge}
that $R_\Lambda$ is a $W$-lattice in $\End_W(\bm{D}_\Q)$, and hence $\tilde{R}_\Lambda/R_\Lambda$
is killed by some power of $p$, say $p^M$.  Up to scaling by powers of $p$, there is a unique 
$W$-lattice $\tilde{D}\subset \bm{D}_\Q$ such that $\tilde{R}_\Lambda \tilde{D}=\tilde{D}$.

Now suppose $(G,  \iota, \lambda, \varrho)$ is a $k$-point of $\mathscr{N}_\Lambda$.  The quasi-isogeny
$\varrho$ determines (up to scaling)  a $W$-lattice $D\subset \bm{D}_\Q$ satisfying
$R_\Lambda D =D$.   It follows from $\tilde{R}_\Lambda D=\tilde{D}$ that 
\[
 p^{M} \tilde{D} \subset D\subset  \tilde{D},
\]
after possibly rescaling $D$, and so there are integers $a<b$, independent of the point
$(G, \iota, \lambda, \varrho)$, such that $p^a \bm{D} \subset D \subset p^b\bm{D}$.
It  follows from this bound and \cite[Corollary 2.29]{RZ} that $\mathscr{N}_\Lambda$ is a closed
subscheme of a projective scheme, hence is projective.
\end{proof}

An obvious corollary of Proposition \ref{prop:zink}  is that  every special lattice $L$ contains  
some $\Lambda^\vee$ (take $\Lambda=\Lambda_L$), and hence
\[
\mathscr{N}_\mathrm{red}  = \bigcup_\Lambda \mathscr{N}_\Lambda ,
\]
where the subscript $\mathrm{red}$ indicates the underlying reduced scheme.
This union is not disjoint, as $\mathscr{N}_{\Lambda_1}^\pm \subset \mathscr{N}_{\Lambda_2}^\pm$
whenever $\Lambda_1 \subset \Lambda_2$.  Define
\[
\mathscr{N}_\Lambda^\circ =
 \mathscr{N}_\Lambda \smallsetminus \bigcup_{\Lambda'\subsetneq \Lambda} \mathscr{N}_{\Lambda'},
\]
so that  (\ref{pointwise bijection}) identifies
\[
\mathscr{N}^\circ_\Lambda(k) =  \{ \mbox{special lattices  $L$ such that $\Lambda_L=\Lambda$ } \}.
\]
It follows easily that
$
\mathscr{N}_\Lambda  = \biguplus_{\Lambda'\subset \Lambda} \mathscr{N}^{\circ}_{\Lambda'} ,
$
and that
\begin{equation}\label{bruhat-tits strat}
\mathscr{N}_\mathrm{red} = \biguplus_\Lambda \mathscr{N}^{\circ}_\Lambda .
\end{equation}
 Abbreviate  
$
\mathscr{N}_\Lambda^\pm = \mathscr{N}_\Lambda \cap \mathscr{N}^\pm
$
and  $\mathscr{N}_\Lambda^{ \pm \circ } = \mathscr{N}^\circ_\Lambda \cap \mathscr{N}^\pm$.
By analogy with \cite{RTW}, \cite{Vol}, and \cite{VolWed}, we call the decomposition 
(\ref{bruhat-tits strat}) the \emph{Bruhat-Tits stratification} of $\mathscr{N}_\mathrm{red}$.
This terminology should be taken with a grain of salt: unlike in \emph{loc.~cit.~}the strata in
(\ref{bruhat-tits strat}) are not in bijection with the vertices in the Bruhat-Tits building of the 
group $J^\mathrm{der}$.  See Sections \ref{ss:building} and \ref{ss:building2} below.

\begin{remark}\label{rem:vertex action}
One could also define an \emph{$E$-vertex lattice} to be an $\co_E$-lattice 
$ \Lambda_E \subset  (\bigwedge_E^2 \bm{D}_\Q )^\Phi$  such that 
$p \Lambda_E \subset\Lambda_E^\vee \subset \Lambda_E ,$
where the dual lattice is taken with respect to $\langle \cdot ,\cdot \rangle$.  The rule
$\Lambda \mapsto \co_E \cdot \Lambda$ establishes a bijection between vertex lattices and $E$-vertex lattices,
with inverse $\Lambda_E\mapsto \{ x\in \Lambda_E : x^\star =x \} . $
The action (\ref{wedge action}) of $\GU(\bm{D}_\Q)$ on $\bigwedge_E^2\bm{D}_\Q$ restricts to an action of 
$J$ on  $(\bigwedge_E^2 \bm{D}_\Q )^\Phi$, and induces an action of $J$ on the set of all $E$-vertex lattices.
In particular, $J$ acts on the set of all vertex lattices.  This action is compatible with the action of $J$ on $\mathscr{N}$
defined in Section \ref{ss:RZ}, in the obvious sense:   
$g \mathscr{N}_\Lambda = \mathscr{N}_{g\action \Lambda}$.  The restriction of this action 
to the subgroup $J^0$ of Remark \ref{excRemark} factors through the surjection 
$J^0 \to \SO(\bm{L}_\Q^\Phi)$, and agrees with the obvious action of  $\SO(\bm{L}_\Q^\Phi)$ 
on the set of vertex lattices.
\end{remark}


\subsection{The Bruhat-Tits building}
\label{ss:building}


In \cite[20.3]{bookGarrett} one finds a description of the Bruhat-Tits building of  $\SO(\bm{L}_\Q^\Phi)$
in terms of lattices.  See also \cite[1.16]{TitsCorvallis}. We will translate this description into the 
 language of our vertex lattices. Consider the set $\VV^\mathrm{adm}$ of all vertex lattices 
 $\Lambda$ of type $2$ or $6$.  We call such vertex lattices {\sl admissible}, and define an adjacency 
 relation $\sim$ in $\VV^\mathrm{adm}$  as follows: distinct admissible 
 vertex lattices are adjacent ($\Lambda\sim \Lambda'$)  if either 
\begin{enumerate}
\item 
$\Lambda'\subset\Lambda$ or $\Lambda'\subset \Lambda$, 
\item
or, $\Lambda$ and $\Lambda'$ are both type $6$ and 
\[
\dim_{\F_p}(\Lambda/\Lambda\cap \Lambda')=\dim_{\F_p}(\Lambda'/\Lambda\cap \Lambda')=1,
\]
\[
\dim_{\F_p}(\Lambda+\Lambda'/\Lambda )=\dim_{\F_p}(\Lambda+\Lambda'/\Lambda')=1.
\]
\end{enumerate}
If $\Lambda$ and $\Lambda'$ are of type $6$ and are adjacent, then $\Lambda\cap\Lambda'$ 
is  a vertex lattice of type $4$ (so  is not admissible).
We construct an abstract simplicial  complex with set of vertices $\VV^\mathrm{adm}$ as 
follows: An $m$-simplex ($0\le m\le 2$) of $\VV^\mathrm{adm}$ is a subset  of $m+1$ 
admissible vertex lattices $\Lambda_0,\Lambda_1, \ldots, \Lambda_m$ 
which are mutually adjacent.    The group $\SO(\bm{L}_\Q^\Phi)$  acts simplicially on $ \VV^\mathrm{adm}$  by 
$g\in \SO(\bm{L}_\Q^\Phi)$ taking $\Lambda$ to $g\cdot\Lambda$.

Now consider the Bruhat-Tits building $\mathcal{BT}$ of $\SO(\bm{L}_\Q^\Phi)$.
We will use the same symbol $\mathcal{BT}$ to denote the underlying simplicial complex.

\begin{proposition}
There is an $\SO(\bm{L}_\Q^\Phi)$-equivariant simplicial bijection $\mathcal{BT}\iso\VV^\mathrm{adm}$.
Furthermore, every vertex lattice of type $4$ is  contained in precisely two vertex lattices of type $6$, and is equal to their intersection.
\end{proposition}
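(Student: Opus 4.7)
The plan is to first establish the second assertion and then use it to construct the simplicial bijection with $\mathcal{BT}$. For the second assertion, fix a type $4$ vertex lattice $\Lambda$ and consider
\[
V_\Lambda \;=\; p^{-1}\Lambda^\vee/\Lambda \;\iso\; \Lambda^\vee/p\Lambda,
\]
equipped with the $\F_p$-valued quadratic form induced by $p^2 Q$ on $p^{-1}\Lambda^\vee$ (equivalently by $Q$ on $\Lambda^\vee$). A dimension count gives $\dim_{\F_p} V_\Lambda = 2$, and the form is nondegenerate because its kernel in $\Lambda^\vee/p\Lambda$ pulls back to $p(\Lambda^\vee)^\vee = p\Lambda$. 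If $\Lambda'$ is any type $6$ overlattice of $\Lambda$, then comparing indices forces $\dim_{\F_p}(\Lambda'/\Lambda) = 1$ and $\Lambda' \subset p^{-1}\Lambda^\vee$; a direct computation shows that the vertex condition $p\Lambda' \subset (\Lambda')^\vee$ is equivalent to asking that the generator of the line $\Lambda'/\Lambda \subset V_\Lambda$ be isotropic, and conversely every isotropic line in $V_\Lambda$ produces a type $6$ overlattice. Type $6$ overlattices of $\Lambda$ are therefore in bijection with isotropic lines in $V_\Lambda$, and it suffices to show $V_\Lambda$ is a hyperbolic $\F_p$-plane: then there are exactly two isotropic lines, yielding two overlattices $\Lambda_1, \Lambda_2$, and the triviality of the intersection of the two lines in $V_\Lambda$ forces $\Lambda_1 \cap \Lambda_2 = \Lambda$.

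The hyperbolicity of $V_\Lambda$ is the main obstacle. The approach is to verify it for one explicit example and then propagate using the transitivity of $\SO(\bm{L}_\Q^\Phi)$ on type $4$ vertex lattices (a standard consequence of Bruhat-Tits theory: all maximal parahorics of a given conjugacy class are $\SO$-conjugate). Decompose $\bm{L}_\Q^\Phi = H_1 \perp H_2 \perp W$ as in Proposition~\ref{prop:hasse}, with $W$ the $2$-dimensional anisotropic summand, and take
\[
\Lambda \;=\; \Lambda_0 \oplus \Lambda_2^{(H_2)} \oplus \Lambda_2^{(W)},
\]
where $\Lambda_0 \subset H_1$ is self-dual and $\Lambda_2^{(H_2)} \subset H_2$, $\Lambda_2^{(W)} \subset W$ are any type $2$ vertex lattices in the respective summands. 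Then $\Lambda$ has type $0 + 2 + 2 = 4$; the type $2$ summands contribute nothing to $V_\Lambda$ because their duals coincide with $p$ times themselves, so $V_\Lambda \iso p^{-1}\Lambda_0/\Lambda_0$. In the standard hyperbolic basis $(e,f)$ of $H_1$ with $[e,f]=1$, the form on $V_\Lambda$ becomes $Q(p^{-1}(ae+bf)) \equiv ab \pmod p$, which is visibly a hyperbolic plane.

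For the first assertion, we invoke the lattice-theoretic description of the Bruhat-Tits building of $\SO$ of a $6$-dimensional quasi-split non-split quadratic space (see \cite{bookGarrett} and \cite[1.16]{TitsCorvallis}). By Proposition~\ref{prop:hasse}, $\SO(\bm{L}_\Q^\Phi)$ is quasi-split with relative root system of type $C_2$, splitting over $\Q_{p^2}$; its three $\SO$-conjugacy classes of maximal parahorics are parametrized by vertex lattices of types $2$, $4$ and $6$. The types $2$ and $6$ provide the admissible vertices of $\mathcal{BT}$, while each type $4$ parahoric, by the second assertion already proved, corresponds to the stabilizer of an edge joining two uniquely determined type $6$ vertices. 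Under this identification, the adjacency relations $\sim$ on $\VV^\mathrm{adm}$ recover exactly the incidence relations between the corresponding parahorics that define the faces of $\mathcal{BT}$, yielding the required $\SO(\bm{L}_\Q^\Phi)$-equivariant simplicial bijection.
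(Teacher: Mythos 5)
Your reduction of the ``Furthermore'' to showing that the $\F_p$-quadratic space $V_\Lambda \cong \Lambda^\vee/p\Lambda$ is a hyperbolic plane is exactly the paper's reduction (the paper phrases it in terms of $L^*/L$ where $L=p\Lambda$ and $*$ is duality for $p^{-1}Q$, but this is the same $\F_p$-space). Where you diverge is in \emph{how} you establish hyperbolicity, and your route has a circularity problem. You verify hyperbolicity for one explicit type $4$ lattice and then propagate by ``transitivity of $\SO(\bm{L}_\Q^\Phi)$ on type $4$ vertex lattices,'' citing Bruhat--Tits theory. But transitivity of $\SO$ on vertex lattices of a given type is precisely what the paper deduces \emph{from} this proposition (see Corollary~\ref{cor:connected}); invoking Bruhat--Tits theory to get it presupposes the identification $\mathcal{BT}\iso\VV^\mathrm{adm}$ that you are trying to prove, and the parenthetical ``all maximal parahorics of a given conjugacy class are $\SO$-conjugate'' is a tautology, not a justification that type $4$ lattice stabilizers form a single conjugacy class. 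The paper avoids this entirely with a direct argument: choose a diagonal basis $\mathrm{diag}(u_1,\dots,u_4,pu_5,pu_6)$ for the Gram matrix of $L$ in $V_0=(\bm{L}_\Q^\Phi,p^{-1}Q)$; then $L^*/L$ is $\mathrm{diag}(u_5,u_6)\bmod p$, and $-u_5u_6$ is a square iff the Hasse invariant of $V_0$ is $+1$, which Proposition~\ref{prop:hasse} gives. This is cleaner and applies to every type $4$ lattice uniformly. (Transitivity could also be proven directly via Witt extension for $\Z_p$-lattices without buildings, but you do not say this, and it would still be more work than the Hasse-invariant argument.)

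For the first assertion, your argument is the same in spirit as the paper's one-line citation of Garrett's lattice description of the building, but the stated parahoric bookkeeping is off. You say the ``three $\SO$-conjugacy classes of maximal parahorics are parametrized by vertex lattices of types $2$, $4$ and $6$'' and then immediately say type $4$ parahorics are edge stabilizers. These two statements contradict each other: maximal parahorics are vertex stabilizers, and an edge stabilizer sits inside two of them. The correct picture (spelled out in Section~\ref{ss:building2} of the paper) is that the three vertex types of $\mathcal{BT}$ are: type $2$ lattices, and \emph{two} orbits of type $6$ lattices under the kernel of the spinor norm (which are fused into a single $\SO$-orbit, since the spinor norm of $\SO(\bm{L}_\Q^\Phi)$ is nontrivial). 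Type $4$ lattices do not correspond to building vertices at all; they correspond to edges between type $6$ vertices, which is what the ``Furthermore'' makes precise. Your final conclusion is right, but the intermediate description would mislead a reader.
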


\begin{proof}  
Define a new quadratic space $(V_0,Q_0) = (\bm{L}_\Q^\Phi, p^{-1}Q)$, and note that, by
Proposition \ref{prop:hasse}, $V_0\iso  \mathbb{H}^{2}\oplus \Q_{p^2}$.  The rule $\Lambda \mapsto p\Lambda$
defines a bijection from the set of vertex lattices in $\bm{L}_\Q^\Phi$ to the set of lattices $L\subset V_0$ satisfying
\[
L\subset L^*\subset p^{-1}L.
\]
Here $L^*$ is the dual lattice of $L$ with respect to the quadratic form $Q_0$.
The isomorphism $\mathcal{BT}\iso\VV^\mathrm{adm}$ now follows from the description 
and properties of the affine building of   $\SO(\bm{L}_\Q^\Phi)\iso\SO(V_0)$ found in \cite[20.3]{bookGarrett}. 

If $\Lambda$ is a type $4$ vertex lattice, the latttice $L=p\Lambda$ in $V_0$ satisfies 
$\dim(L^*/L) =2$.  Moreover, the $k$-quadratic space $L^*/L$ is a hyperbolic plane (choose a basis of $L$
for which the bilinear form has diagonal matrix, and use the fact that $V_0$ has Hasse invariant $1$),
and so contains exactly two isotropic lines.  Those lines have the form $L_1/L$ and $L_2/L$,
and  $p^{-1}L_1$ and $p^{-1}L_2$ are the unique type $6$ vertex lattices containing $\Lambda$.
\end{proof}

We can also construct a simplicial complex $\VV$ with  vertices the set of {\sl all} 
vertex lattices as follows (compare to  \cite[\S 3]{RTW}). We call two distinct vertex lattices $\Lambda$ and $\Lambda'$
neighbors  if $\Lambda\subset \Lambda'$ or $\Lambda'\subset \Lambda$. 
An $m$-simplex ($m\le 2$) in $\VV$ is formed by vertex lattices
$\Lambda_0, \Lambda_1,\ldots , \Lambda_m$ such that any two members of this set are neighbors. 
The vertex lattices of type $4$ are in bijection with pairs of adjacent type $6$ vertex lattices. 
Hence a vertex lattice of type $4$ corresponds to an edge in the Bruhat-Tits building between 
type $6$ vertex lattices.  From basic properties of the Bruhat-Tits building, we deduce the following.

\begin{corollary}\label{cor:connected}
The group $\SO(\bm{L}_\Q^\Phi)$ acts transitively on the set of vertex lattices  of a given type,
and any two vertex lattices are connected by a sequence of adjacent vertices in $\VV$.  
In particular, the group $J$, and even the subgroup $J^0$, 
acts transitively on the set of vertex lattices  of a given type (under the action of 
Remark \ref{rem:vertex action}).
\end{corollary}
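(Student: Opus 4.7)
The plan is to deduce everything from the identification $\mathcal{BT}\iso\VV^\mathrm{adm}$ of the previous proposition, together with standard facts about the Bruhat-Tits building of $\SO(V_0)$ (which, by Proposition \ref{prop:hasse}, is quasi-split and splits over $\Q_{p^2}$).

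First I would prove transitivity of $\SO(\bm{L}_\Q^\Phi)$ on vertex lattices of a fixed type. For types $2$ and $6$, these are precisely the two types of vertices of $\mathcal{BT}$ under the simplicial bijection, and $\SO(V_0)$ acts transitively on the vertices of each type by standard Bruhat-Tits theory. For type $4$, the last assertion of the previous proposition says that every type $4$ vertex lattice has the form $\Lambda_1\cap\Lambda_2$ for a unique unordered pair of adjacent type $6$ vertex lattices, so type $4$ vertex lattices correspond bijectively to edges of $\mathcal{BT}$ whose endpoints are both type $6$ vertices. Transitivity on such edges is again a standard fact about the action of $\SO(V_0)$ on its building, so transitivity on type $4$ vertex lattices follows.

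Next, I would verify that $\VV$ is connected. The simplicial complex $\mathcal{BT}$ is connected, so any two admissible vertex lattices are joined by a sequence of edges in $\mathcal{BT}$. If two type $6$ lattices $\Lambda\sim\Lambda'$ are adjacent in $\mathcal{BT}$ via case (2) of the adjacency relation (i.e.\ not nested), then in $\VV$ they are connected through the type $4$ vertex lattice $\Lambda\cap\Lambda'$, since $\Lambda\cap\Lambda'\subset\Lambda$ and $\Lambda\cap\Lambda'\subset\Lambda'$ so these are neighbors in $\VV$. Adjacency in $\mathcal{BT}$ via case (1) is already adjacency in $\VV$. Finally, every type $4$ vertex lattice is contained in (hence adjacent in $\VV$ to) a type $6$ vertex lattice, so is connected to the admissible part, and $\VV$ is connected.

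For the last claim, I would invoke the exact sequence
\[
1\to\Q_p^\times\to J^0\to\SO(\bm{L}_\Q^\Phi)\to 1
\]
from Remark \ref{excRemark}. Since the central subgroup $\Q_p^\times$ acts trivially on every $\Z_p$-lattice in $\bm{L}_\Q^\Phi$, the action of $J^0$ on the set of vertex lattices factors through $\SO(\bm{L}_\Q^\Phi)$. Transitivity on vertex lattices of a fixed type therefore transfers from $\SO(\bm{L}_\Q^\Phi)$ to $J^0$, and a fortiori to $J$. There is no real obstacle here; the only subtlety is matching the two adjacency relations on $\VV^\mathrm{adm}$ and $\VV$, which is resolved by inserting the intersection $\Lambda\cap\Lambda'$ whenever two type $6$ vertex lattices are adjacent but not nested.
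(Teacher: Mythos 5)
Your proof is correct and follows the same route the paper leaves implicit when it writes ``from basic properties of the Bruhat-Tits building, we deduce the following'': pass through the equivariant bijection $\mathcal{BT}\iso\VV^{\mathrm{adm}}$ of the preceding proposition, use standard transitivity of $\SO(V_0)$ on vertices of a given type (resp.\ on edges joining two type-$6$ vertices) for types $2$ and $6$ (resp.\ type $4$), get connectivity of $\VV$ by inserting the type-$4$ intersection whenever two type-$6$ lattices are adjacent but not nested, and finally transfer transitivity to $J^0$ (and hence $J$) via the central extension $1\to\Q_p^\times\to J^0\to\SO(\bm{L}_\Q^\Phi)\to 1$ of Remark~\ref{excRemark}. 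The one point you quote as ``standard'' that carries real content is transitivity of $\SO(\bm{L}_\Q^\Phi)$ (as opposed to its derived group or the image of $\mathrm{SU}(T)$) on type-$6$ vertex lattices; the paper addresses exactly this in Section~\ref{ss:building2} by noting that $\mathrm{SU}(T)\to\SO(\bm{L}_\Q^\Phi)$ has image the kernel of the spinor norm, so that $\SO$ does contain elements merging the two Hermitian sub-types $0$ and $4$ into a single orbit. Your appeal to Garrett's description covers this, but it is worth being aware that this is precisely where the similitude/spinor-norm enlargement is needed.
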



\section{Deligne-Lusztig varieties and the Bruhat-Tits strata}
\label{strata}


 In this section we  show that,  for any vertex lattice $\Lambda$, the varieties
\[
\mathscr{N}_\Lambda=\mathscr{N}_\Lambda^+ \uplus \mathscr{N}_\Lambda^-
\quad \mbox{and} \quad 
\mathscr{N}_\Lambda^\circ=\mathscr{N}_\Lambda^{\circ+} \uplus \mathscr{N}_\Lambda^{\circ -}
\]
of Section \ref{ss:vertex} can be identified with varieties over $k$ defined purely in terms of the 
linear algebra of the $k$-quadratic space  $\Omega= (\Lambda/\Lambda^\vee)\otimes_{\F_p}k$.


\subsection{Deligne-Lusztig varieties}

Let us recall the general definition of Deligne-Lusztig varieties. Suppose that $G_0$ is a connected reductive group over the finite field $\F_p$,
and set  $G=G_0\otimes_{\F_p}k$.  We will also use the symbol $G$ to denote the abstract group of $k$-valued points
of $G_0$. Denote by $\Phi:G\to G$ the Frobenius moprhism.   By Lang's theorem $G_0$ is quasi-split, and so we may choose 
  a maximal torus $T \subset G$ and  a Borel subgroup containing $T$,   both defined over $\F_p$.
The Weyl group $W$ that corresponds to the pair $(T, B)$ is acted upon by $\Phi$, and  the group $W$ with its $\Phi$-action
does not depend on our choices.  In fact,  in \cite{DeligneLusztig} a Weyl group $W$ with $\Phi$-action is defined as a 
projective limit over all choices of pairs $(T, B)$, without having to assume that these pairs are $\Phi$-stable.

Let $\Delta^*=\{\alpha_1,\ldots , \alpha_n\}$ be the set of simple roots corresponding to the pair $(T, B)$,
and consider the corresponding simple reflections $s_i=s_{\alpha_i}$ in the Weyl group $W$. 
For $I\subset \Delta^*$, let $W_I$ be the subgroup of $W$ generated by $\{ s_i : i\in I\}$, and consider the corresponding 
parabolic subgroup $P_I=BW_IB$. The quotient $G/P_I$ parametrizes parabolic subgroups of $G$ of type $I$.  Suppose $J\subset \Delta^*$ is another subset with corresponding standard parabolic $P_J$. Since 
\[
G=\biguplus_{w\in W_I  \backslash W/W_J} P_IwP_J
\]
we have a bijection 
\[
P_I\backslash G/P_J \iso W_I\backslash W/W_J
\]
Composing this with $G/P_I\times G/P_J\to P_I\backslash G/P_J$ given by $(g_1, g_2)\mapsto g_1^{-1}g_2$ 
defines the \emph{relative position invariant}
\[
\inv: G/P_I\times G/P_J\to W_I\backslash W/W_J.
\]
The Frobenius $\Phi: G\to G$ induces $\Phi: G/P_I\to G/P_{\Phi(I)}$.
\begin{definition}  
For $w\in W_I\backslash W/W_{\Phi(I)}$, the \emph{Deligne-Lusztig variety} $X_{P_I}(w)$ is the locally closed
reduced subscheme of $G/P_I$ with $k$-points  
\[
X_{P_I}(w) = \{  gP_I \in G/P_I :  \inv(g, \Phi(g))=w\}.
\]
\end{definition}
The   variety  $X_{P_I}(w)$ is actually defined over the unique extension of degree $r$ of $\F_p$ in $k$, 
where $r$ is the smallest positive integer for which $\Phi^r(I)=I$.

\begin{proposition}\label{prop:DL properties}
The  Deligne-Lusztig variety $X_{P_I}(w)$  is smooth of pure dimension 
\[
\dim X_{P_I}(w) = \ell(w)+\dim (G/P_{I\cap \Phi(I)})-\dim (G/P_I).
\]
If  $I=\Phi(I)$ then $\dim X_{P_I}(w) = \ell_I(w) - \ell(w_I)$,
where $w_I$ is the longest element in $W_I$, and $\ell_I(w)$ is the maximal length of an element in $W_I w W_I$.
Taking $I=\emptyset$, the variety $X_B(w)$ is irreducible of dimension $\dim X_B(w)=\ell(w)$.
\end{proposition}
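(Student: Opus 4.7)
The plan is to follow the original approach of Deligne and Lusztig: prove smoothness and compute the dimension via a transversality argument at the level of the ``graph of Frobenius,'' then obtain irreducibility of $X_B(w)$ via a Bott--Samelson resolution.

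For the first part, consider the morphism
\[
\gamma_I : G/P_I \longrightarrow G/P_I \times G/P_{\Phi(I)}, \qquad gP_I \mapsto \bigl(gP_I,\; \Phi(g)P_{\Phi(I)}\bigr).
\]
The target decomposes, via the relative position invariant, into smooth $G$-orbits $\mathcal{O}_w$ indexed by $w \in W_I \backslash W / W_{\Phi(I)}$, and by definition $X_{P_I}(w) = \gamma_I^{-1}(\mathcal{O}_w)$. Because Frobenius is purely inseparable, $d\Phi = 0$, so the image of $d\gamma_I$ at any point is precisely $T(G/P_I) \oplus 0$; combined with the fact that the second projection $\mathcal{O}_w \to G/P_{\Phi(I)}$ is smooth surjective by $G$-equivariance, this forces $\gamma_I$ to meet each $\mathcal{O}_w$ transversally. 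Hence $X_{P_I}(w)$ is smooth of pure dimension $\dim\mathcal{O}_w - \dim(G/P_{\Phi(I)})$. Taking the minimal length lift $\dot w$ of $w$, the stabilizer of $(P_I, \dot w P_{\Phi(I)})$ in $G$ is $P_I \cap \dot w P_{\Phi(I)}\dot w^{-1}$, which by the standard combinatorics of parabolic double cosets has the same dimension as $P_{I \cap \Phi(I)}$; this yields $\dim \mathcal{O}_w = \ell(w) + \dim(G/P_{I\cap\Phi(I)})$ and hence the first formula. The reformulation when $I = \Phi(I)$ follows from the Coxeter-theoretic identity relating the minimal and maximal lengths in the double coset $W_I w W_I$ to $\ell_I(w)$ and $\ell(w_I)$.

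For the irreducibility of $X_B(w)$, fix a reduced expression $w = s_{i_1} \cdots s_{i_r}$ and form the Bott--Samelson variety
\[
\widetilde{X}(w) = \bigl\{(B_0, \ldots, B_r) \in (G/B)^{r+1} : \inv(B_{j-1}, B_j) \in \{e, s_{i_j}\}\bigr\},
\]
an iterated $\mathbb{P}^1$-bundle over $G/B$ (via projection onto $B_0$), hence smooth and irreducible. The map $(B_0, \ldots, B_r) \mapsto (B_0, B_r)$ sends $\widetilde{X}(w)$ onto the Schubert closure $\overline{C_w} \subset G/B \times G/B$. Pulling back along the graph-of-Frobenius morphism $G/B \to G/B \times G/B$, $B \mapsto (B, \Phi(B))$, and invoking transversality once more (via $d\Phi = 0$), yields a smooth irreducible variety mapping surjectively and birationally onto $\overline{X_B(w)}$, and irreducibility descends to the dense open $X_B(w)$.

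The main obstacle will be the transversality argument: this is where the positive-characteristic hypothesis enters essentially, via $d\Phi = 0$, and has no analogue in characteristic zero. Once transversality is in hand, the dimension formulas reduce to standard Coxeter-combinatorial identities, and the irreducibility claim reduces to well-known properties of Bott--Samelson resolutions.
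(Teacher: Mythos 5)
The paper itself offers no proof: it cites \cite[Section 3.4]{VolWed} and moves on, so you are supplying more than the authors did. Your central mechanism --- pulling back the $G$-orbit stratification of $G/P_I \times G/P_{\Phi(I)}$ along the graph of Frobenius, and using $d\Phi = 0$ to force transversality --- is exactly the standard argument going back to Deligne--Lusztig, and it does establish smoothness with the correct codimension. The framework is right, but two of the steps have real gaps.

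The dimension bookkeeping is internally inconsistent. You assert that the stabilizer $P_I\cap\dot w P_{\Phi(I)}\dot w^{-1}$ ``has the same dimension as $P_{I\cap\Phi(I)}$,'' which would give $\dim\mathcal{O}_w = \dim G - \dim P_{I\cap\Phi(I)} = \dim(G/P_{I\cap\Phi(I)})$ with no $\ell(w)$ term; in the same sentence you then conclude $\dim\mathcal{O}_w = \ell(w)+\dim(G/P_{I\cap\Phi(I)})$. Both cannot hold. What is actually true, for $w$ the minimal-length double coset representative, is that the stabilizer has dimension $\dim P_K - \ell(w)$ where $K = I\cap {}^{w}\Phi(I)$ (a standard fact about parabolic double cosets), so $\dim\mathcal{O}_w = \ell(w)+\dim(G/P_{K})$. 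Since $K$ involves $\dot w\Phi(I)\dot w^{-1}$ rather than $\Phi(I)$, passing from this to the displayed formula requires a reconciliation you have not supplied, and in general $K\neq I\cap\Phi(I)$ when $w\neq 1$ and $I\neq\emptyset$ (though the two sets do agree in the only cases this paper uses, namely $w=1$ and $I=\emptyset$). You should either keep track of $K$ throughout or restrict explicitly to those cases.

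The Bott--Samelson argument does not prove irreducibility of $X_B(w)$, and the blanket assertion is in any case false as stated: $X_B(1) = (G/B)^\Phi$ is a nonempty finite set, typically with more than one point. Pulling back an irreducible variety along a morphism need not yield an irreducible variety; transversality tells you the pullback of the Bott--Samelson resolution is smooth, but smooth varieties can be disconnected, so nothing ``descends.'' The correct criterion is that $X_B(w)$ is irreducible precisely when $w$ lies in no proper $\Phi$-stable standard parabolic of $W$ (Bonnaf\'e--Rouquier, cited in Remark~\ref{rem:identity} for $X_{P_I}(1)$), and its proof is substantially more involved than a resolution pullback. The paper needs this only for Coxeter elements, where the hypothesis holds; your argument, as written, attempts to prove a statement that is false by a method that cannot deliver irreducibility even under the correct hypothesis.
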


\begin{proof}
This is standard. See  \cite[Section 3.4]{VolWed}, for example.
\end{proof}

\begin{remark}\label{rem:identity} 
If $w=1$, then $X_{P_I}(1)$ can be identified with the intersection of the image of 
the closed immersion $G/P_{I\cap \Phi(I)}\hookrightarrow G/P_I\times G/P_{\Phi(I)}$
with the graph of Frobenius $\Phi: G/P_I\to G/P_{\Phi(I)}$. In particular  $X_{P_I}(1)$
is projective. If \[\bigcup_{r\geq 0} \Phi^r(I)=\Delta^*\]  then $X_{P_I}(1)$ is  also irreducible, by \cite{BonnafeRouquier}.
\end{remark}


\subsection{An even orthogonal group} 
\label{BTstrata}


We now consider the case that $G_0$ is a nonsplit special orthogonal group 
in an even number of variables.  Let $\Omega_0$ be an $\F_p$-vector space of dimension $2d$  equipped with a 
nondegenerate nonsplit quadratic form.  There is a basis $\{e_1,\ldots , e_d, f_1, \ldots, f_d\}$ of 
$
\Omega=\Omega_0\otimes_{\F_p} k
$ 
such that   $\langle e_1, \ldots, e_d\rangle $ and $\langle f_1,\ldots,  f_d\rangle$ are isotropic,  
$[e_i, f_j]=\delta_{ij}$,  and the Frobenius 
\[
\Phi = {\rm id}\otimes \sigma
\] 
acting on  $\Omega$ fixes $e_i$ and $f_i$ for $1\leq i\leq d-1$, and interchanges $e_d$ with $f_d$. 
Note that $\Omega$ contains no $\Phi$-invariant Lagrangian subspaces.
Abbreviate $G_0=\SO(\Omega_0)$ and $G=\SO(\Omega)$.

Denote by $\mathrm{OGr}(r)$ the scheme whose functor of points assigns to a $k$-scheme 
$S$ the set of all   totally isotropic local $\co_S$-module direct summands
$
\LL \subset \Omega\otimes_k\co_S
$
of rank $r$.   In particular, $\mathrm{OGr}(d)$ is the moduli space of Lagrangian subspaces of $\Omega$.
Denote by $\mathrm{OGr}(d-1,d)$  the scheme whose functor of points assigns to a $k$-scheme 
$S$ the set of all flags of totally isotropic local $\co_S$-module direct summands
$
\LL_{d-1} \subset \LL_d  \subset \Omega\otimes_k\co_S
$
of rank $d-1$ and $d$, respectively.  The following lemma is elementary.

\begin{lemma}\label{easyperp}
For each totally isotropic  local $\co_S$-module direct summand \[\LL_{d-1} \subset \Omega\otimes_k\co_S\] of rank $d-1$,  
there are exactly two totally isotropic local $\co_S$-module direct summands
of rank $d$ containing $\LL_{d-1} $.  
\end{lemma}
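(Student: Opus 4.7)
The plan is to reduce the counting problem to isotropic lines in a rank $2$ quotient bundle, and then to trivialize the resulting degree $2$ étale cover globally by appealing to the two rulings of the maximal isotropic Grassmannian.

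Given $\LL_{d-1}\subset\Omega\otimes_k\co_S$ of rank $d-1$ and totally isotropic, nondegeneracy of the form on $\Omega$ implies that $\LL_{d-1}^\perp$ is a local $\co_S$-module direct summand of rank $d+1$ containing $\LL_{d-1}$, and that the induced bilinear form on the rank $2$ quotient $V:=\LL_{d-1}^\perp/\LL_{d-1}$ is nondegenerate. The assignment $\LL_d\mapsto \LL_d/\LL_{d-1}$ is then a bijection between rank $d$ totally isotropic local direct summands of $\Omega\otimes_k\co_S$ containing $\LL_{d-1}$ and rank $1$ isotropic local direct summands of $V$. The scheme parametrizing the latter is the closed subscheme $Z\subset \mathbb{P}(V)\cong \mathbb{P}^1_S$ cut out by the quadratic form; since the form is nondegenerate and $p\neq 2$, the cover $Z\to S$ is finite étale of degree $2$, and it suffices to show that it is trivial, i.e.~the disjoint union of two copies of $S$.

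To this end I pass to the universal family. The orthogonal Grassmannian $\mathrm{OGr}(d)$ has exactly two connected components $\mathrm{OGr}(d)^\pm$ (the standard fact about Lagrangians in a split even-dimensional quadratic space over an algebraically closed field), and the forgetful map $\mathrm{OGr}(d-1,d)\to\mathrm{OGr}(d)$ is a $\mathbb{P}^{d-1}$-bundle, as every codimension $1$ subspace of a Lagrangian is automatically isotropic. Consequently $\mathrm{OGr}(d-1,d)$ decomposes into two connected components $\mathrm{OGr}(d-1,d)^\pm$. The projection $\mathrm{OGr}(d-1,d)\to\mathrm{OGr}(d-1)$ is the universal version of the double cover $Z\to S$; since $\mathrm{OGr}(d-1)$ is connected and both components $\mathrm{OGr}(d-1,d)^\pm$ are visibly nonempty and dominant, each projection $\mathrm{OGr}(d-1,d)^\pm\to\mathrm{OGr}(d-1)$ is a degree $1$ finite étale cover, hence an isomorphism. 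Pulling back along $\LL_{d-1}:S\to \mathrm{OGr}(d-1)$ then trivializes $Z\to S$ into two copies of $S$, producing exactly two rank $d$ isotropic extensions of $\LL_{d-1}$.

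The main obstacle is this last global triviality claim: a priori $Z\to S$ is only a degree $2$ étale cover, and without further input it need not split. It is precisely the two rulings of the hyperbolic quadric $\mathrm{OGr}(d)$ that allow the two extensions to be distinguished globally, rather than merely étale-locally.
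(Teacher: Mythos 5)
Your proof is correct, and it correctly isolates the one genuinely non-obvious point: that the degree two \'etale cover $Z\to S$ of isotropic lines in $V=\LL_{d-1}^\perp/\LL_{d-1}$ is globally trivial, not merely \'etale-locally so. A purely local argument (extend $\LL_{d-1}$ to a hyperbolic basis near each point of $S$) would only establish that there are two extensions Zariski-locally, and a nontrivial connected double cover would then give \emph{zero} global extensions; so the global triviality does need an argument. The route you chose --- pass to the universal family $\mathrm{OGr}(d-1,d)\to\mathrm{OGr}(d-1)$, split it into the two components coming from the two components of $\mathrm{OGr}(d)$ via the $\mathbb{P}^{d-1}$-bundle $\mathrm{OGr}(d-1,d)\to\mathrm{OGr}(d)$, and observe that each component must map with degree one since $\mathrm{OGr}(d-1)$ is connected and both components are nonempty --- is sound and in fact recovers precisely the picture the authors describe in the sentence immediately following the lemma (``each of the two components maps isomorphically to $\mathrm{OGr}(d-1)$''). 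The paper offers no proof, labeling the lemma elementary, so there is nothing to compare against; but an alternative ``elementary'' route one could imagine (Witt cancellation over local rings, or computing that $\mathrm{disc}(V)\cong\det(\Omega)\otimes\co_S$ is the constant $-1$, a square in $k$) would still have to confront the same globalization step, and your argument handles it cleanly. One small caveat worth recording: as literally stated, ``exactly two'' is only correct when $S$ is connected (for $S=S_1\uplus S_2$ one can mix the two extensions on the two pieces, giving $2^{\#\pi_0(S)}$ of them); your pullback $Z\cong S\uplus S$ makes this visible. This imprecision is already present in the lemma as stated and is harmless for how it is used (in the proof of Theorem~\ref{thm:comparison} one is given one extension $\LL^\sharp$ and needs the unique other one, which is a local-plus-uniqueness statement), so it is not a defect of your argument specifically.
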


In other words, the forgetful map 
$
\mathrm{OGr}(d-1,d) \to \mathrm{OGr}(d-1)
$
is a two-to-one cover.  In fact, the Grassmannian $\mathrm{OGr}(d-1,d)$ has two connected components,
which  are interchanged by the action of any orthogonal transformation of determinant $-1$.  Each of the 
two components maps isomorphically to $\mathrm{OGr}(d-1)$ under the forgetful map.    
 Label
the two components as 
\[
\mathrm{OGr}(d-1,d) = \mathrm{OGr}^+(d-1,d) \uplus  \mathrm{OGr}^-(d-1,d)
\]
in such a way that the flags
\begin{equation}\label{flag +}
\langle e_1, \ldots, e_{d-1}\rangle \subset \langle e_1, \ldots , e_{d-1},e_d\rangle  
\end{equation}
and
\begin{equation}\label{flag -}
\langle e_1, \ldots, e_{d-1}\rangle   \subset \langle e_1, \ldots , e_{d-1}, f_d \rangle 
\end{equation}
define  $k$-points of  $\mathrm{OGr}^+(d-1,d)$ and $\mathrm{OGr}^-(d-1,d)$, respectively.

Denote by $\mathscr{X}\subset \mathrm{OGr}(d)$ the reduced closed subscheme  with $k$-points 
\[
\mathscr{X} = \{  \LL  \in \mathrm{OGr}(d) :  \dim( \LL + \Phi( \LL) ) =d+1 \}.
\]
There is a closed immersion $\mathscr{X} \to \mathrm{OGr}(d-1,d)$  sending 
\[
\LL \mapsto  \LL \cap \Phi(\LL) \subset \LL,
\]
and the open and closed subvariety 
$
\mathscr{X}^\pm = \mathscr{X} \cap \mathrm{OGr}^\pm(d-1,d)
$
of $\mathscr{X}$ is identified with
\begin{equation}\label{twisted grass}
\mathscr{X}^\pm = \{ \LL_{d-1} \subset \LL_d \in \mathrm{OGr}^\pm(d-1,d) :  \LL_{d-1} \subset  \Phi(\LL_d) \}.
\end{equation}

 \begin{remark}\label{rem:swap}
 Although we have defined $\mathrm{OGr}(d-1,d)$ and $\mathscr{X}$ as $k$-schemes, they both have natural 
 $\F_p$-structures.  The Frobenius morphism from  $\mathrm{OGr}(d-1,d)$ to itself   
 interchanges the flags (\ref{flag +}) and (\ref{flag -}), and  hence interchanges the two connected components.  It 
 follows that  $\mathscr{X}^\pm \iso \sigma^*\mathscr{X}^\mp$, and that
 the individual components $\mathscr{X}^+$ and  $\mathscr{X}^-$ have natural $\F_{p^2}$-structures.
 \end{remark}

Our choice of basis of $\Omega$ determines a maximal $\Phi$-stable torus $T \subset G$. Set 
\begin{align}\label{std flag}
\FF_i^\pm & = \langle e_1, \ldots , e_i \rangle \mbox{ for } 1\leq i\leq d-1 \\
\FF_d^+  & = \langle e_1, \ldots, e_{d-1}, e_d \rangle \nonumber \\
\FF_d^-  & = \langle e_1, \ldots, e_{d-1}, f_d \rangle  \nonumber .
\end{align} 
This gives two ``standard" isotropic flags  $\FF^+_\bullet$ and $\FF^-_\bullet$ in $\Omega$ satisfying
$
\FF^\pm_\bullet=\Phi(\FF^\mp_\bullet) .
$ 
These flags have the same stabilizer  $B\subset G$, which is a $\Phi$-stable Borel subgroup containing $T$.
The corresponding set of simple reflections in the Weyl group is 
\[
\{s_1, \ldots, s_{d-2}, t^+, t^-\}
\]
where
\begin{itemize}
\item $s_i$  interchanges $e_i$ with $e_{i+1}$,  $f_i$ with $f_{i+1}$, and fixes the other basis elements.

\item $t^+$ interchanges $e_{d-1}$ with $e_d$, $f_{d-1}$ with $f_d$, and fixes the other basis elements.

\item $t^-$ interchanges $e_{d-1}$ with $f_d$, $f_{d-1}$ with $e_d$, and fixes the other basis elements.
\end{itemize}
Notice that  $\Phi(s_i)=s_i$, and $\Phi(t^\pm)=t^{\mp}$, and so the products
\[
w^\pm=t^{\mp}s_{d-2}\cdots s_2s_1
\] 
 are  \emph{Coxeter elements}: products of exactly one representative from each $\Phi$-orbit 
in the set of simple reflections above. More generally, define $w_0=1$, $w^\pm_1=t^\mp$, and 
$$
w_r^\pm =t^\mp \cdot s_{d-2}\cdots s_{d-r}
$$
for $2\leq  r\leq d-1$.  In particular, $w_{d-1}^\pm=w^\pm$. 
Define parabolic subgroups 
\[
B=P_{d-1} \subset P_{d-2} \subset \ldots \subset P_0 \subset P^\pm
\]
of $G$  as follows:  set $ P_{d-1}=P_{d-2}=B$, and for $0\leq r\leq d-2$  let $P_r$ be the parabolic  corresponding to the set
  $\{s_1,\ldots , s_{d-(r+2)}\}$.  Define $P^\pm$ to be the maximal parabolic  corresponding to $\{s_1,\ldots , s_{d-2}, t^\pm\}$.
One can easily check that  $P_0$ is the stabilizer in $G$ of $\FF_{d-1}^\pm$, and so
\begin{equation}\label{flag 1}
 G/P_0 \iso \mathrm{OGr}(d-1) . 
\end{equation}
More generally,  $P_r$ is the stabilizer of the standard isotropic
flag $\FF^\pm_{d-r-1} \subset\cdots \subset \FF^\pm_{d}$
Similarly, $P^\pm$ is the stabilizer of the Lagrangian subspace $\FF_d^\pm$, and so 
\begin{equation}\label{flag 2}
G/P^\pm \iso \mathrm{OGr}(d).
\end{equation}

  \begin{proposition}
The isomorphism (\ref{flag 2}) identifies   $\mathscr{X}^\pm$with the Deligne-Lusztig variety 
$X_{P^\pm}(1)$.  In particular, $\mathscr{X}^\pm$  is    projective, irreducible, and smooth of dimension $d-1$.
\end{proposition}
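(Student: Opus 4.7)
The plan is to match $\mathscr{X}^\pm$ with $X_{P^\pm}(1)$ by applying the geometric description from Remark \ref{rem:identity} and then unwinding definitions. For the parabolic $P^+$, the corresponding set of simple reflections is $I=\{s_1,\ldots,s_{d-2},t^+\}$. Since $\Phi$ fixes each $s_i$ and interchanges $t^+$ with $t^-$, we have $\Phi(I)=\{s_1,\ldots,s_{d-2},t^-\}$ and $I\cap\Phi(I)=\{s_1,\ldots,s_{d-2}\}$. Hence $P_{I\cap\Phi(I)}=P_0$, the stabilizer of $\FF_{d-1}^\pm=\langle e_1,\ldots,e_{d-1}\rangle$, so $G/P_0\iso\mathrm{OGr}(d-1)$. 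Using Lemma \ref{easyperp} together with the observation that each $\mathrm{OGr}^\pm(d-1,d)\to\mathrm{OGr}(d-1)$ is an isomorphism, the two Lagrangian extensions of a given $\LL_{d-1}$ lie in opposite connected components of $\mathrm{OGr}(d)$. Consequently the natural closed immersion $G/P_0\hookrightarrow G/P^+\times G/P^-$ realizes $\mathrm{OGr}(d-1)$ as the incidence variety sending $\LL_{d-1}$ to its ordered pair (``$+$'' extension, ``$-$'' extension).

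With this preparation, Remark \ref{rem:identity} identifies $X_{P^+}(1)$ with the set of $\LL_d\in\mathrm{OGr}^+(d)$ for which the pair $(\LL_d,\Phi(\LL_d))$ lies in the above image---equivalently, for which there exists some $\LL_{d-1}$ with $\LL_{d-1}\subset\LL_d$ and $\LL_{d-1}\subset\Phi(\LL_d)$. Remark \ref{rem:swap} ensures that $\Phi$ carries $\mathrm{OGr}^+(d)$ to $\mathrm{OGr}^-(d)$, so the incidence condition is well posed. Because $\Omega$ has no $\Phi$-invariant Lagrangian subspaces, $\LL_d\ne\Phi(\LL_d)$, and so $\LL_{d-1}=\LL_d\cap\Phi(\LL_d)$ is forced to have dimension exactly $d-1$; the data thus collapses to a flag $\LL_{d-1}\subset\LL_d$ in $\mathrm{OGr}^+(d-1,d)$ with $\LL_{d-1}\subset\Phi(\LL_d)$, which is exactly the description of $\mathscr{X}^+$ given in (\ref{twisted grass}). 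This yields $\mathscr{X}^+\iso X_{P^+}(1)$, and the argument for the ``$-$'' sign is identical.

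For the stated structural properties I would invoke the general Deligne--Lusztig theory already cited. Projectivity follows from Remark \ref{rem:identity} (intersection with the graph of Frobenius on a projective variety). Irreducibility follows from the criterion in \emph{loc.\ cit.}, since $I\cup\Phi(I)=\Delta^*$. Smoothness and purity come from Proposition \ref{prop:DL properties}, which with $w=1$ also gives
\[
\dim X_{P^\pm}(1)=\dim G/P_{I\cap\Phi(I)}-\dim G/P^\pm=\dim\mathrm{OGr}(d-1)-\dim\mathrm{OGr}(d)=d-1,
\]
the last equality because $\mathrm{OGr}^\pm(d-1,d)\to\mathrm{OGr}(d)$ is a $\mathbb{P}^{d-1}$-bundle while $\mathrm{OGr}^\pm(d-1,d)\iso\mathrm{OGr}(d-1)$. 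The main potential obstacle is purely bookkeeping: tracking the ``$\pm$'' labels consistently through the chain of identifications $G/P^\pm\iso\mathrm{OGr}^\pm(d)$ and through the Frobenius twist on the Lagrangian Grassmannian, which is why the choice of base points for the flags $\FF_d^\pm$ in (\ref{flag +})--(\ref{flag -}) is crucial for pinning down which component maps to which.
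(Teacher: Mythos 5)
Your proposal is correct and takes essentially the same approach as the paper: identify $X_{P^\pm}(1)$ via Remark \ref{rem:identity} as the intersection of the image of $G/P_0\hookrightarrow G/P^+\times G/P^-$ with the graph of Frobenius, translate this into the incidence condition on flags in $\mathrm{OGr}^\pm(d-1,d)$ matching (\ref{twisted grass}), and read off projectivity, irreducibility, smoothness, and the dimension from Remark \ref{rem:identity} and Proposition \ref{prop:DL properties}. The extra details you supply (computing $I\cap\Phi(I)$ explicitly, noting the two extensions of $\LL_{d-1}$ land in opposite components, and ruling out $\LL_d=\Phi(\LL_d)$) are fine but are exactly the content implicitly used in the paper's argument.
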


\begin{proof}
Note that $P_0=P^+\cap P^-$, and that the Frobenius $\Phi$ interchanges $P^+$ and $P^-$. The 
two projections $G/P_0\to G/P^\pm$ combine to give  closed immersions
\begin{align*} 
i^+ & : G/P_0\to G/P^+\times G/P^- \\
 i^- & : G/P_0\to G/P^-\times G/P^+,
\end{align*}
while the Frobenius induces morphisms $\Phi: G/P^+\to G/P^-$ and 
$\Phi: G/P^-\to G/P^+$ with graphs $\Gamma^+_\Phi\subset G/P^+ \times G/P^-$
and $\Gamma^-_\Phi\subset G/P^-\times G/P^+$, respectively.
The isomorphisms (\ref{flag 1}) and (\ref{flag 2}) identify the  intersection of $\Gamma^\pm_\Phi$ and the image 
of $i^\pm$ with the set of flags $\LL_{d-1} \subset \LL_d \in \mathrm{OGr}^\pm(d-1,d)$ such that 
$\LL_{d-1} \subset \Phi(\LL_d)$.  By (\ref{twisted grass}), this intersection is isomorphic to   $\mathscr{X}^+$.
All of the claims now follow from Remark \ref{rem:identity}, together with the dimension formula
\[
\dim X_{P^\pm}(1) = \dim (G/P_0)-\dim (G/P^\pm) =d-1
\]
of Proposition \ref{prop:DL properties}.  
\end{proof}

It is also useful to view $\mathscr{X}^\pm$ as a closure of a Deligne-Lusztig variety in the flag variety $G/P_0$.

\begin{lemma}
The isomorphism $\mathrm{OGr}^\pm(d-1,d) \iso \mathrm{OGr}(d-1) \iso G/P_0$ identifies
$\mathscr{X}^\pm$ with the closure in $G/P_0$ of the Deligne-Lusztig variety
\[
X_{P_0}(t^\mp)=\{g\in G/P_0 :  \inv(g, \Phi(g))=t^\mp \} .
\]
\end{lemma}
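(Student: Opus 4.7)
The plan is to identify $\mathscr{X}^\pm$, viewed as a subscheme of $G/P_0$ via $\mathrm{OGr}^\pm(d-1,d) \iso \mathrm{OGr}(d-1) \iso G/P_0$, with the Zariski closure of the locally closed Deligne-Lusztig variety $X_{P_0}(t^\mp)$. The key input is a set-theoretic identification on an open dense locus of $\mathscr{X}^\pm$, after which irreducibility of $\mathscr{X}^\pm$ from the previous proposition and a dimension count finish the argument.

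First I would unpack the subset $\mathscr{X}^\pm \subset \mathrm{OGr}(d-1)$ explicitly. Given $\LL_{d-1} \in \mathrm{OGr}(d-1)$, Lemma \ref{easyperp} gives exactly two Lagrangians containing it, one in each component of $\mathrm{OGr}(d)$; call them $\LL_d^+$ and $\LL_d^-$. Under $\mathrm{OGr}^\pm(d-1,d)\iso\mathrm{OGr}(d-1)$ the flag $\LL_{d-1} \subset \LL_d^\pm$ maps to $\LL_{d-1}$, and from (\ref{twisted grass}) the condition $\LL_{d-1} \in \mathscr{X}^\pm$ translates to $\LL_{d-1} \subset \Phi(\LL_d^\pm)$. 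Since $\Phi$ swaps the two components of $\mathrm{OGr}(d)$, $\Phi(\LL_d^\pm)$ is a Lagrangian in the $\mp$ component containing $\LL_{d-1}$, and Lemma \ref{easyperp} forces $\Phi(\LL_d^\pm) = \LL_d^\mp$; in particular $\Phi(\LL_{d-1}) \subset \LL_d^\mp$. I would then stratify $\mathscr{X}^\pm$ according to whether $\LL_{d-1}$ is $\Phi$-fixed: the closed $\Phi$-fixed sublocus is finite and equals $X_{P_0}(1)$, while on the open complement $\mathscr{X}^{\pm,\circ}$ the planes $\LL_{d-1}$ and $\Phi(\LL_{d-1})$ are distinct, both lie in $\LL_d^\mp$, and so $\LL_{d-1} + \Phi(\LL_{d-1}) = \LL_d^\mp$ lies in the $\mp$ component.

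The key geometric step is to verify that this configuration, namely two isotropic $(d-1)$-planes intersecting in codimension one with sum in the $\mp$ component of $\mathrm{OGr}(d)$, corresponds to the relative position $t^\mp \in W_I \backslash W / W_I$ for $I = \{s_1,\ldots,s_{d-2}\}$. I would check this by direct calculation starting from $\FF_{d-1}^\pm = \langle e_1,\ldots,e_{d-1}\rangle$: the reflection $t^-$ sends $\FF_{d-1}^\pm$ to $\langle e_1,\ldots,e_{d-2},f_d\rangle$, whose span with $\FF_{d-1}^\pm$ is the standard Lagrangian $\FF_d^-$, and similarly $t^+$ produces $\FF_d^+$. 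Transitivity of the $G$-action on pairs of $(d-1)$-planes of fixed relative position extends this correspondence to every point of $\mathscr{X}^{\pm,\circ}$; conversely, any $\LL_{d-1}$ with $\inv(\LL_{d-1},\Phi(\LL_{d-1})) = t^\mp$ has $\LL_{d-1} + \Phi(\LL_{d-1})$ a Lagrangian in the $\mp$ component, which by uniqueness must equal $\LL_d^\mp = \Phi(\LL_d^\pm)$, placing $\LL_{d-1}$ in $\mathscr{X}^{\pm,\circ}$. This establishes $\mathscr{X}^{\pm,\circ} = X_{P_0}(t^\mp)$ set-theoretically.

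To conclude, the previous proposition shows $\mathscr{X}^\pm$ is irreducible, projective and smooth of dimension $d-1$, and Proposition \ref{prop:DL properties} yields $\dim X_{P_0}(t^\mp) = \ell_I(t^\mp) - \ell(w_I) = d-1$. Hence $X_{P_0}(t^\mp)$ is an open, nonempty subvariety of full dimension in the irreducible projective $\mathscr{X}^\pm$, so $\mathscr{X}^\pm = \overline{X_{P_0}(t^\mp)}$ inside $G/P_0$. The main obstacle is the sign-sensitive translation of the geometric incidence data into the Weyl group double coset $t^\mp$: because $\Phi$ interchanges $t^+$ and $t^-$, one must carefully track which connected component of $\mathrm{OGr}(d)$ contains $\LL_{d-1} + \Phi(\LL_{d-1})$, and the calculation on the standard flags is essentially the only place where the choice of sign gets pinned down.
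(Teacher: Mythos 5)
Your proof is correct and follows the same essential strategy as the paper: decompose $\mathscr{X}^\pm$ into the locus where $\LL_{d-1}$ is $\Phi$-fixed and its complement, identify these respectively with $X_{P_0}(1)$ and $X_{P_0}(t^\mp)$, and conclude via a closure argument. The one place you diverge is the concluding step: the paper invokes the Bruhat closure relation $\overline{X_{P_0}(t^\mp)} = X_{P_0}(1)\uplus X_{P_0}(t^\mp)$ directly (citing Springer), whereas you instead use the irreducibility and closedness of $\mathscr{X}^\pm$ established in the preceding proposition, plus the fact that $X_{P_0}(t^\mp)$ is a nonempty open subset. Both are valid; your route is logically dependent on the prior proposition (which the paper's proof of this lemma is not), but it avoids the need to recall closure relations for Deligne-Lusztig strata, and is arguably more self-contained. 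You also verify the relative-position computation $\inv(\LL_{d-1},\Phi(\LL_{d-1})) = t^\mp$ explicitly on the standard flags, a point the paper asserts without elaboration; this is a useful unpacking, since the sign bookkeeping (which component $\LL_{d-1}+\Phi(\LL_{d-1})$ lands in) is exactly where an error would most likely creep in.
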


\begin{proof}
Using (\ref{twisted grass}), we may characterize the $k$-points of $\mathscr{X}^\pm$ by
\[
\mathscr{X}^\pm = \{ \LL_{d-1} \subset \LL_d  \in \mathrm{Gr}^\pm(d-1 ,d ) : \Phi(\LL_{d-1})=\LL_{d-1} \mbox{ or } \LL_{d-1}+\Phi(\LL_{d-1})=\Phi(\LL_d) \}.
\]
Recalling the standard isotropic flags of (\ref{std flag}), the rule  $g \mapsto g \FF_{d-1}^\pm \subset g \FF_d^\pm$ 
defines an isomorphism
\[
X_{P_0}(1) \iso \{ \LL_{d-1} \subset \LL_d  \in  \mathrm{Gr}^\pm(d-1 ,d ) : \LL_{d-1} = \Phi( \LL_{d-1})  \},
\]
while the same rule defines an isomorphism
\[
X_{P_0}(t^\mp) \iso \{ \LL_{d-1} \subset \LL_d \in   \mathrm{Gr}^\pm(d-1 ,d ) : \LL_{d-1}+\Phi(\LL_{d-1} )= \Phi(\LL_d) \}.
\]
Thus $\mathscr{X}^\pm$ is the disjoint union of $X_{P_0}(1)$ and $X_{P_0}(t^\mp )$.  
Elementary properties of the Bruhat order (see Section 8.5 of \cite{Springer}, for example)  imply that 
\[
\overline{ X_{P_0}(t^\mp ) } = X_{P_0}(1)\uplus X_{P_0}(t^\mp ),
\]
completing the proof.
\end{proof}

The following  proof is essentially the same as \cite[Proposition 5.5]{RTW}.

\begin{proposition}\label{prop:stratification}
There is a stratification
\[
\mathscr{X}^\pm=\biguplus_{r=0}^{d-1} X_{P_r}(w^\pm_r)
\] 
of $\mathscr{X}^\pm$ into a disjoint union of  locally closed subvarieties.  The  stratum
$X_{P_r}(w^\pm_r)$ is smooth of pure dimension $r$,  and  has closure 
\[
\overline{ X_{P_r}(w^\pm_r) } = X_{P_0}(w^\pm_0) \uplus \cdots \uplus X_{P_r}(w^\pm_r).
 \] 
 The highest dimensional stratum $X_{P_{d-1}}(w^\pm_{d-1})=  X_B(w^\pm)$  is irreducible,  open, and dense.
\end{proposition}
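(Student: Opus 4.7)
The plan is to adapt the argument of \cite[Proposition 5.5]{RTW} to the present even orthogonal setting, indexing strata by the ``depth'' of the Frobenius-invariant part of a Lagrangian. Concretely, for $\LL \in \mathscr{X}^\pm(k)$ I would define the decreasing chain $\LL^{(0)} = \LL$, $\LL^{(i+1)} = \LL^{(i)} \cap \Phi(\LL^{(i)})$. The hypothesis $\dim(\LL + \Phi(\LL)) = d+1$ forces the first step to drop dimension by exactly $1$, and an inductive argument using that each $\LL^{(i)}$ is isotropic shows every subsequent step drops dimension by $0$ or $1$. Letting $r(\LL) \in \{0, \ldots, d-1\}$ be the smallest index for which $\LL^{(r(\LL)+1)} = \LL^{(r(\LL))}$, one obtains a partial isotropic flag
\[
\LL^{(r(\LL))} \subsetneq \LL^{(r(\LL)-1)} \subsetneq \cdots \subsetneq \LL^{(0)} = \LL
\]
whose bottom piece is $\Phi$-stable. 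Forgetting $\LL$ itself but remembering the sign component of $\mathrm{OGr}^\pm(d-1,d)$ in which it lives, this flag gives a point of $G/P_r$.

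Next I would show that this construction identifies the locally closed subset $\{\LL : r(\LL) = r\} \subset \mathscr{X}^\pm$ with the Deligne-Lusztig variety $X_{P_r}(w_r^\pm)$. To compute the relative position $\inv(g,\Phi(g)) \in W_{I_r}\backslash W/W_{I_r}$, where $I_r = \{s_1, \ldots, s_{d-r-2}\}$ defines $P_r$, a direct calculation with the basis $\{e_i,f_i\}$ shows that a generic stratum flag is obtained from the standard flag $\FF^\pm_{d-r-1} \subset \cdots \subset \FF^\pm_d$ by applying a representative of $w_r^\pm = t^\mp s_{d-2}\cdots s_{d-r}$: the reflections $s_{d-r}, \ldots, s_{d-2}$ implement the successive intersections with $\Phi$, while the factor $t^\mp$ encodes the interchange of the two components of $\mathrm{OGr}(d-1,d)$ under $\Phi$ recorded in Remark \ref{rem:swap}. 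With this identification in place, the dimension and closure assertions are consequences of Proposition \ref{prop:DL properties} together with the standard closure relations for Deligne-Lusztig varieties under the Bruhat order on $W_{I_r}\backslash W/W_{I_r}$: since $\Phi$ fixes $I_r$ for $r \le d-2$, one verifies $\ell_{I_r}(w_r^\pm) - \ell(w_{I_r}) = r$, while for $r = d-1$ one has $\dim X_B(w^\pm) = \ell(w^\pm) = d-1$ directly.

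The main obstacle, and the delicate step, is the relative-position calculation in the second paragraph: one must track the sign conventions carefully so that $t^\mp$ (and not $t^\pm$) appears in $w_r^\pm$, and one must confirm that $t^\mp s_{d-2}\cdots s_{d-r}$ represents a well-defined double coset in $W_{I_r}\backslash W/W_{I_r}$ of length $r$. Once this is settled, the irreducibility of the top-dimensional stratum $X_B(w^\pm) = X_{P_{d-1}}(w_{d-1}^\pm)$ follows from \cite{BonnafeRouquier}, since $w^\pm$ is a Coxeter element whose $\Phi$-orbit of simple factors exhausts $\Delta^*$, and the openness and density of this stratum follow from the closure formula in the Bruhat order.
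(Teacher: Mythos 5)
Your argument is essentially the same as the paper's proof: both stratify by the depth of the Frobenius filtration $\LL^{(i)}$, map the resulting isotropic flag to $G/P_r$, identify the image with $X_{P_r}(w_r^\pm)$ by following \cite[Proposition 5.5]{RTW}, and deduce smoothness, dimension, closure, and irreducibility from Proposition \ref{prop:DL properties} and \cite{BonnafeRouquier}. One bookkeeping note: since $\dim(\LL + \Phi(\LL)) = d+1$ forces $\LL^{(1)} \subsetneq \LL^{(0)}$ for every $\LL \in \mathscr{X}^\pm$, your $r(\LL)$ (the first index at which the filtration stabilizes) actually ranges over $\{1,\ldots,d\}$ and equals the paper's stratum index $r$ plus one; with that shift, the flag you build is exactly the length-$(r+2)$ flag $\LL^{(r+1)} \subsetneq \cdots \subsetneq \LL^{(0)} = \LL$ stabilized by $P_r$, so there is no need to ``forget $\LL$ and remember the sign component'' --- just retain $\LL^{(0)}$ as the top term, as the paper does.
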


\begin{proof}
 Suppose  $\LL$ is a  $k$-point of $\mathscr{X}^\pm\subset \mathrm{Gr}(d)$, and define
\begin{equation*}
\LL^{(i)}=\LL \cap \Phi(\LL)\cap \cdots \cap \Phi^i(\LL).
\end{equation*}
 An inductive argument using 
\[
\LL^{(i)}  \cap\Phi(\LL^{(i)} )=\LL^{ (i-1) }\cap\Phi(\LL^{( i-1) }) \cap \Phi^2(\LL^{ (i-1) }),
\]
shows that $\LL^{( i+1) }$ has codimension at most $1$ in $\LL^i$.
Denote by $\mathscr{X}^\pm_r \subset \mathscr{X}^\pm$ the reduced closed subscheme 
whose $k$-valued points are those $\LL$ satisfying $\LL^{ ( r+2) }=\LL^{(r+1)}$.
The complement $\mathscr{X}^\pm_{r} \smallsetminus \mathscr{X}^\pm_{r-1}$ is the locally closed subvariety of $\mathscr{X}^\pm$
consisting of those $\LL$ for which
\begin{equation}\label{flag}
\LL^{( r+2) }=\LL^{(r+1)}  \subsetneq \cdots \subsetneq \LL^{(1)}  \subsetneq \LL^{(0)} = \LL.
\end{equation}
Recalling that the parabolic subgroup  $P_r$ is the stabilizer of the standard isotropic
flag $\FF^\pm_{d-r-1} \subset\cdots \subset \FF^\pm_{d}$ of length $r+2$,
we obtain morphism $\mathscr{X}^\pm_{r} \smallsetminus \mathscr{X}^\pm_{r-1} \to G/P_r$ by sending 
$\LL$ to the flag (\ref{flag}).  By similar reasoning as  \cite[Proposition 5.5]{RTW}, this defines an isomorphism 
$\mathscr{X}^\pm_{r} \smallsetminus \mathscr{X}^\pm_{r-1} \iso X_{P_r}(w_r^\pm)$ with inverse
$g \mapsto g \FF_d^\pm.$  All claims now follow easily.
\end{proof}


\subsection{A few special cases}
\label{ss:DL special cases}

 
We continue to let $G_0=\SO(\Omega_0)$, where $\Omega_0$ is the nonsplit quadratic space
over $\F_p$ of dimension $2d$, $\Omega=\Omega_0\otimes k$, and $G=\SO(\Omega)$. 
In  the applications we will only need to consider $d\le 3$,  and in these cases the structure of the
$k$-variety $\mathscr{X}$ (with its $\F_{p^2}$-structure of Remark \ref{rem:swap}) can be made more explicit.

a) First suppose $d=1$.  In this case $\mathscr{X}^\pm$ is a single point, defined over $\F_{p^2}$.

b) Now suppose $d=2$.  In this case $P_1=P_0=B$,  and the stratification of Proposition \ref{prop:stratification} is
\[
\mathscr{X}^\pm=X_B(1)\uplus X_B(t^\mp)
\]
where $X_B(1)$ is a  $0$-dimensional closed subvariety, and the open stratum $X_B(t^\mp)$ 
has dimension $1$. The Dynkin diagram identity $D_2 = A_1\times A_1$   corresponds to an exceptional isomorphism
  $\Spin(\Omega) \iso \SL_2\times\SL_2$. 
Indeed, the even Clifford algebra $C_0(\Omega_0)$ is isomorphic to $M_{2}(\F_{p^2})$, and hence
$C_0(\Omega) \iso M_2(k) \times M_2(k)$.   This isomorphism restricts to an isomorphism of algebraic groups 
\[
\GSpin(\Omega) \iso \{ (x,y) \in \GL_2 \times \GL_2 : \det(x)=\det(y) \}
\]
over $k$, which in turn determines  an isomorphism of $k$-varieties
$
G/P_0 \iso   {\mathbb P}^1\times {\mathbb P}^1
$
in such a way that the Frobenius morphism on the left corresponds to $(x,y)\mapsto (\Phi(y), \Phi(x))$
on the right.  The subvarieties $\mathscr{X}^\pm \subset G/P_0$ are identified with 
\begin{align*}
\mathscr{X}^+ & =\{( \Phi(x) , x) :  x\in {\mathbb P}^1\}, \\
\mathscr{X}^- & =\{(x,  \Phi(x) ) :  x\in {\mathbb P}^1\}.
\end{align*}
Therefore, both $\mathscr{X}^+$ and $\mathscr{X}^-$ are isomorphic (over $\F_{p^2}$) to $ \mathbb{P}^1$. 
The closed stratum $X_B(1)$ corresponds to the $\F_{p^2}$-rational points of ${\mathbb P}^1$.

c) Finally, suppose $d=3$.  In this case 
\begin{equation}\label{strat3}
\mathscr{X}^\pm= X_{P_0 }(1)\uplus X_{B}(t^\mp)\uplus  X_{B}(t^\mp s_{1}).
\end{equation}
The open  stratum  $X_{B}(t^\mp s_{1})$ has dimension $2$, the stratum
 $X_{B}(t^\mp)$ is locally closed  of dimension $1$,   and the closed stratum $X_{P_0 }(1)$ has dimension $0$.
 To continue, we will use the Dynkin diagram isomorphism $D_3=A_3$, corresponding to an isomorphism  
 between the adjoint forms of $G$ and a unitary group in $4$ variables, as in Proposition \ref{prop:gspin}
and Remark \ref{excRemark}.   

Let  $\VV_0$ be  the  $4$-dimensional $\F_{p^2}$-vector space with basis $e_1,e_2,e_3,e_4$, endowed with the split 
$\F_{p^2}/\F_p$-Hermitian form defined by $\langle e_i, e_{5-j}\rangle=\delta_{ij}$. 
Denote by $U_0$ the unitary group of $\VV_0$, an algebraic group over $\F_p$, so that  
$U=U_0\times_{\F_p} k$ acts on $\VV=\VV_0\otimes_{\F_p} k$.
Recall the isomorphism $\F_{p^2}\otimes_{\F_p}  k \simeq k\oplus k$ 
defined by $x\otimes a\mapsto (xa , x^p a)$, and denote by $\epsilon_0$ and $\epsilon_1$  the idempotents 
that correspond to $(1, 0)$ and  $(0,1)$, so that 
\begin{equation}\label{finite splitting}
\epsilon_0\VV \iso \VV_0\otimes_{\F_{p^2}}k.
\end{equation}
The action of $U$ on $\epsilon_0\VV$ defines an isomorphism $U \iso \GL_4$.
The diagonal torus and the standard Borel subgroup of upper triangular matrices 
in $\GL_4$ give a maximal torus and a Borel subgroup of $U$,  both defined over $\F_p$.
Given  $0\leq r, s$ with $r+s=4$,  the pair   $(r, s)$, viewed as an ordered partition of $4$, 
defines a  parabolic subgroup $P_{(r,s)}$ containing $B$  with Levi component $\GL_r\times \GL_s$. 
The parabolic subgroup $P_{(r,s)}$ is  defined over
$\F_{p^2}$ and satisfies $\Phi(P_{(r,s)})=P_{(s, r)}$. 

Let $\mathrm{Gr}(r)$ be the Grassmanian of $r$-planes in $\epsilon_0 \VV$.
The above isomorphism $U \iso \GL_4$ induces an isomorphism $U/P_{(r,s)}\iso {\mathrm {Gr}}(r)$ 
defined over $\F_{p^2}$, and the Frobenius morphism $\Phi: U/P_{(r,s)}\to U/P_{(s, r)}$ corresponds to a 
morphism
\begin{equation}\label{unitary morphism}
\Phi: {\mathrm {Gr}}(r)\xrightarrow{ \ } {\mathrm {Gr}}(s)
\end{equation}
which can be described, as in  \cite{Vol},  as follows.  
Consider the $k$-valued form $\langle\langle \cdot ,\cdot  \rangle\rangle$ on 
(\ref{finite splitting}) defined  by
\[
\langle\langle x\otimes a, y\otimes b\rangle\rangle=\langle x, y\rangle \otimes ab^p.
\]
It is $k$-linear in the first variable but Frobenius semi-linear in the second.
  For a subspace $\mathcal U\subset  \epsilon_0 \VV$,
denote by  $\mathcal U^{\lefthalfcup}$   the perpendicular of $\UU$ for the form 
$\langle \langle \cdot ,\cdot \rangle\rangle$. If $\UU$ is $\F_{p^2}$-rational 
then $\UU^\lefthalfcup=\UU^\perp$. If $\dim_k(\UU)=r$  then $\dim_k(\UU^\lefthalfcup)=s$, and,
according to  \cite[Lemma 2.12]{Vol},  the morphism (\ref{unitary morphism})
is given   by $\Phi(\UU)=\UU^\lefthalfcup$ on $k$-valued points.
Using this description of $\Phi$,  the unitary Deligne-Lusztig variety 
$X_{P_{(r,s)} }(1) \subset \mathrm{Gr}(r)$ is seen to be
\[
X_{P_{(r,s)}}(1)= 
\begin{cases}
\{\UU\subset  \epsilon_0 \VV : \dim_k(\UU)=r, \ \UU\subset \UU^{\lefthalfcup}\}
& \mbox{ if } r\le s \\
\{\UU\subset  \epsilon_0 \VV : \dim_k(\UU)=r, \ \UU^\lefthalfcup\subset \UU \}
& \mbox{ if } s\le r . 
\end{cases}
\]
By Remark \ref{rem:identity} these Deligne-Lusztig varieties are projective and smooth.
 
By inspecting the Dynkin diagram identity $D_3=A_3$ we can see that the exceptional isomorphism between the
adjoint forms of $G$ and $U$ can be chosen so that
the parabolic subgroups $P^+$, $P^-$ and $P_0$ of $G$ correspond to $P_{(1,3)}$, $P_{(3,1)}$
and $P_{(1,3)}\cap P_{(3,1)}$ of $U \iso \GL_4$ respectively. Therefore, the Deligne-Lusztig variety $\mathscr{X}^{+}$
is isomorphic to the unitary  Deligne-Lusztig variety 
\[
X_{P_{(1,3)}}(1)=\{\UU\subset k^4:\dim_k(\UU)=1, \ \UU\subset \UU^\lefthalfcup\}.
\]
Similarly $\mathscr{X}^{-}$ is isomorphic to the unitary Deligne-Lusztig variety 
\[
X_{P_{(3,1)}}(1)=\{\UU\subset k^4:\dim_k(\UU)=3, \ \UU^\lefthalfcup\subset \UU\}.
\]
Therefore, both $\mathscr{X}^+$ and $\mathscr{X}^-$ are isomorphic over $\F_{p^2}$ to the 
smooth hypersurface in $\mathbb{P}^3$ given by the homogeneous equation:
\[
x_1x_4^p+x_2x^p_3+x_3x_2^p+x_4x_1^p=0.
\]
In fact, since all nondegenerate Hermitian forms on $\VV_0=\F_{p^2}^4$ are isomorphic
we can also determine equations for the unitary Deligne-Lusztig varieties using the Hermitian form 
given by the identity matrix $I_4$. This gives the Fermat hypersurface
\[
x_0^{p+1}+x_1^{p+1}+x_2^{p+1}+x_3^{p+1}=0,
\]
which is isomorphic to the surface above.

The stratification (\ref{strat3}) of $\mathscr{X}^+$ now corresponds to the stratification of the unitary
 Deligne-Lusztig variety $X_{P_{(1,3)}}(1)$  studied in \cite[Theorem 2.15]{Vol}. 
 The Frobenius $\sigma:k \to k$ defines an operator on $\VV$, which interchanges the two 
 summands $\VV=\epsilon_0\VV\oplus \epsilon_1\VV$.  
 Thus we obtain an operator  $\tau=\sigma^2$ on $\epsilon_0\VV$.
Any $k$-subspace $\UU\subset \epsilon_0\VV$ satisfies  
$\tau(\UU)=(\UU^\lefthalfcup)^\lefthalfcup$. The open $2$-dimensional
stratum of $X_{P_{(1,3)}}(1)$ has $k$-valued points corresponding to lines $\UU$ such that
\begin{align*}
\dim_k(\UU+\tau(\UU))&=2 \\ 
\dim_k(\UU+\tau(\UU)+\tau^2(\UU))&=3.
\end{align*}
The $1$-dimensional stratum has $k$-valued points corresponding to lines $\UU$ such that 
$\dim_k(\UU+\tau(\UU))=2$ and $\UU+\tau(\UU)$ is $\tau$-invariant (\emph{i.e.~}$\F_{p^2}$-rational). 
 Finally, the $0$-dimensional stratum consists 
of $k$-valued points corresponding to lines $\UU$ which are $\tau$-invariant.
In other words, the $0$-dimensional stratum of $\mathscr{X}^+$ is just the set of  $\F_{p^2}$-rational points.
For a $k$-valued point $\UU$ on the $1$-dimensional statum, set $\UU'=\UU+\tau(\UU)$.  This
 is an $\F_{p^2}$-rational plane with $\UU'^\lefthalfcup=\UU'^\perp=\UU'$.
The irreducible components of the $1$-dimensional stratum are parametrized
by such planes. Indeed,  also conversely, given an $\F_{p^2}$-rational plane $\UU'$ 
which is isotropic $\UU'=\UU'^\perp$, 
we obtain a closed subscheme of $X_{P_{(1,3)}}(1)$ with points corresponding
to all lines $\UU$ with $\UU\subset \UU'$.  This subscheme is isomorphic to $\mathbb{P}^1$ and gives the 
Zariski closure of the corresponding irreducible component  of the $1$-dimensional stratum. 

We can now also determine the number of  components of the strata:

$\bullet$ The $0$-dimensional stratum consists of $(p^3+1)(p^2+1)$ 
points. Indeed, observe 
that, as in \cite[Example 5.6]{VolWed}, we can calculate that the number of $\F_{p^2}$-valued points of the 
Fermat surface above is equal to $(p^3+1)(p^2+1)$. (Note that there is a typographical error in \emph{loc.~cit.}:
the summation for $\Sigma_l$ should start at $j=0$.)
This is equal to the number
of $\F_{p^2}$-rational lines $\UU\subset \F_{p^2}^4$ such that $\UU\subset \UU^{\perp}$
where the orthogonal is with respect to the (standard) 
Hermitian form $\langle\cdot,\cdot\rangle$ on $\F^4_{p^2}$. Therefore, we have $(p^3+1)(p^2+1)$
components of the $0$-dimensional stratum.

$\bullet$ The $1$-dimensional stratum has $(p^3+1)(p+1)$ 
 components. Note that by the above the Zariski closure of each component
is isomorphic to a projective line ${\mathbb{P}^1}$ over $\F_{p^2}$
and the corresponding component is 
 the complement of all $\F_{p^2}$-rational points
in this line. 
To determine the number of irreducible components of the $1$-dimensional stratum we start by counting the number
of such components 
whose closure passes a given $\F_{p^2}$-rational point,
i.e. the number of $\mathbb{P}^1$'s in our configuration that cross at that point:
By the above, this count is given by the number of 
$\F_{p^2}$-rational planes $\UU'$ which are isotropic $\UU'=\UU'^\perp$ 
and satisfy $\UU\subset \UU'\subset \UU^\perp$. These are given by 
$\F_{p^2}$-rational lines in $\UU^\perp/\UU$ which are isotropic for
the induced hermitian form. We can easily see that there are exactly $p+1$
such lines. Since there are a total of $(p^3+1)(p^2+1)$ $\F_{p^2}$-rational points
each belonging on $p+1$ projective lines which each have $p^2+1$ points we conclude that there are exactly $(p^3+1)(p+1)$ projective lines
in our configuration. 

 The same discussion  applies to $\mathscr{X}^-$.


\subsection{Deligne-Lusztig varieties and the Bruhat-Tits stratification} 


Now we relate the varieties $\mathscr{X}$ studied above to the  varieties
$\mathscr{N}_\Lambda^\circ \subset \mathscr{N}_\Lambda$ of  Section \ref{ss:vertex}.  
Fix a vertex lattice $\Lambda$ of type $2d\in \{2,4,6\}$, and endow the $2d$-dimensional 
$\F_p$-vector space
\[
\Omega_0=\Lambda/\Lambda^\vee
\]
with the nondegenerate $\F_p$-valued quadratic form $q(x)=pQ(x)$ induced by the quadratic form $Q$ on $\bm{L}_\Q^\Phi$.
Set $\Omega=\Omega_0\otimes_{\F_p} k$, and let $G$ be the special orthogonal group of $\Omega$.  
Note that $\Omega_0$ is nonsplit: the existence of a $d$-dimensional totally isotropic subspace in $\Omega_0$ 
would imply the existence of a self-dual lattice in $\bm{L}_\Q^\Phi$, contradicting the Hasse invariant calculation of 
Proposition \ref{prop:hasse}.

Recall from Section  \ref{BTstrata}   the reduced closed subscheme  $\mathscr{X}=\mathscr{X}_\Lambda\subset \mathrm{OGr}(d)$ 
whose $k$-points are the Lagrangian subspaces $\LL\subset \Omega$ with
\begin{equation}\label{special codim}
\dim_k (  \LL+\Phi(\LL)  ) =d+1.
\end{equation}
 The Lagrangian subspaces of  $\Omega$ are in bijection with the $W$-lattices $L\subset \bm{L}_\Q$ satisfying $L=L^\vee$ and
$\Lambda^\vee \subset L$, and the condition (\ref{special codim}) is equivalent to  $L$ being a  special lattice.
Combining this with (\ref{special bijection}), we obtain bijections
\[
\mathscr{X}_\Lambda(k) \iso \{ \mbox{special lattices  $L\subset \bm{L}_\Q$ such that $\Lambda^\vee \subset L$} \}
\iso \mathscr{N}_\Lambda(k).
\]

\begin{theorem}\label{thm:comparison}
There is an isomorphism of $k$-schemes $ \mathscr{N}_\Lambda\iso \mathscr{X}_\Lambda$
inducing the above bijection on $k$-points.  After possibly relabeling the two connected components of 
$\mathscr{X}_\Lambda =\mathscr{X}_\Lambda^+ \uplus \mathscr{X}_\Lambda^-$, we may assume that this isomorphism identifies
$ \mathscr{N}^\pm_\Lambda\iso \mathscr{X}_\Lambda^\pm$.
\end{theorem}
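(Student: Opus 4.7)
The plan is to produce a morphism $\mathscr{N}_\Lambda \to \mathscr{X}_\Lambda$ refining the bijection of Corollary~\ref{cor:the bijection} on $k$-points, and then check it is an isomorphism via a tangent-space comparison together with reducedness of the source and smoothness of the target.

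\emph{Step 1: constructing the morphism.} For a $k$-scheme $S$ and a point $(G,\iota,\lambda,\varrho)\in\mathscr{N}_\Lambda(S)$, let $D$ be the evaluation on $S$ of the covariant Dieudonn\'e crystal of $G$, equipped with its Hodge filtration $\omega\subset D$. The integrality $\Lambda^\vee\subset\End(G)$ imposed by $\mathscr{N}_\Lambda$, together with the Clifford isomorphism $C(\bm{L})\iso\End_W(\bm{D})$ of Proposition~\ref{prop:basic hodge} (transported via $\varrho$), gives an action of $\Lambda\otimes_{\Z_p}\co_S$ on $D$. I then define the subsheaf
\[
\LL \ := \ \bigl\{\,\bar x\in (\Lambda/\Lambda^\vee)\otimes_{\F_p}\co_S \ : \ x\cdot\omega\subset\omega\ \text{locally}\,\bigr\}
\]
of $\Omega\otimes_k\co_S$. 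Pointwise on $S$, this recovers the Lagrangian $L/\Lambda^\vee$ arising from the special lattice $L$ of Theorem~\ref{thm:the bijection}; in particular $\LL$ is locally a direct summand of rank $d$ and totally isotropic, giving a morphism to $\mathrm{OGr}(d)$. The codimension-one statement $(L+\Phi_*(L))/L$ of that theorem, applied fibrewise, shows the image lands in $\mathscr{X}_\Lambda$.

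\emph{Step 2: tangent spaces.} By the construction of Step~1 and Corollary~\ref{cor:the bijection}, the map induces the asserted bijection on $\bar k$-points for every algebraically closed $\bar k\supset k$. By Grothendieck--Messing theory, the tangent space to $\mathscr{N}_\Lambda$ at a geometric point is the $k$-vector space of lifts of the Hodge filtration $\omega\subset D$ to $k[\epsilon]$ that are preserved by the action of (the Clifford algebra generated by) $\Lambda^\vee$; equivalently, lifts of the Lagrangian $D_1/pD$ in the sense of Proposition~\ref{prop:dieudonne signature} compatible with $\co_E$ and the polarization. On the other side, the tangent space to $\mathscr{X}_\Lambda\subset\mathrm{OGr}(d)$ is the space of lifts of $\LL$ to $\Omega\otimes k[\epsilon]$ subject to the special condition. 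A direct unwinding via the bijection of Theorem~\ref{thm:the bijection}, applied over the dual numbers $k[\epsilon]$ using its scheme-theoretic version, identifies these two tangent spaces, and shows the morphism is an isomorphism on tangent spaces.

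\emph{Step 3: conclusion.} The morphism $\mathscr{N}_\Lambda\to\mathscr{X}_\Lambda$ is proper (both sides are projective by Proposition~\ref{prop:proj} and the Deligne--Lusztig description), bijective on geometric points, and an isomorphism on tangent spaces. Since $\mathscr{X}_\Lambda$ is smooth (by Proposition~\ref{prop:stratification} and the explicit descriptions in Section~\ref{ss:DL special cases}) and $\mathscr{N}_\Lambda$ is reduced by construction, these properties force the morphism to be an isomorphism. The compatibility with the $\pm$-decomposition comes from tracing the effect of $\varrho\mapsto p\varrho$ through the Clifford/Dieudonn\'e dictionary: multiplying $\varrho$ by $p$ shifts $\ord_p(c(\varrho))$ by one and simultaneously swaps the two connected components of $\mathrm{OGr}(d-1,d)$, matching the relabelling of $\mathscr{X}_\Lambda^+$ and $\mathscr{X}_\Lambda^-$.

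The principal difficulty is the tangent-space identification of Step~2, which demands careful Grothendieck--Messing bookkeeping: one must check that infinitesimal deformations of the Hodge filtration commuting with the full lattice $\Lambda^\vee$ of special quasi-endomorphisms are in natural bijection with the linear-algebra deformations of $\LL\subset\Omega$, and that the special/codimension-one condition on the orthogonal-Grassmannian side corresponds exactly to the signature $(2,2)$ determinant condition on the $p$-divisible group side. All other ingredients are fibrewise statements already established in Theorem~\ref{thm:the bijection} and its consequences.
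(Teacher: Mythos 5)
Your Step~1 matches the paper's construction in spirit, and the closing appeal to properness, bijectivity on geometric points, and smoothness (hence normality) of $\mathscr{X}_\Lambda$ is essentially how the paper concludes -- by Zariski's main theorem. But your Step~2 has a genuine gap, and your explanation of the $\pm$-compatibility is incorrect.

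The gap in Step~2: Grothendieck--Messing theory computes first-order deformations of the $p$-divisible group with its extra structure, i.e.\ it describes the tangent space of the formal scheme $\widetilde{\mathscr{N}}_\Lambda$ (the locus where $\Lambda^\vee$ acts integrally), which may well be non-reduced. By definition $\mathscr{N}_\Lambda$ is the \emph{reduced} subscheme underlying $\widetilde{\mathscr{N}}_\Lambda$, and its tangent space is merely a subspace of the Grothendieck--Messing tangent space -- there is no a priori description of which subspace. So the claimed ``isomorphism on tangent spaces'' cannot be extracted from Grothendieck--Messing bookkeeping. The paper is acutely aware of precisely this difficulty (it remarks in the introduction that it cannot prove that the scheme-theoretic intersections $\widetilde{\mathscr{N}}_{\Lambda_1}\cap\widetilde{\mathscr{N}}_{\Lambda_2}$ are reduced), and this is why its argument scrupulously avoids any tangent-space computation on the $\mathscr{N}$ side. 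Fortunately, your Step~3 does not actually need the tangent-space claim: a proper morphism that is bijective on geometric points of every algebraically closed extension is finite and birational, and since $\mathscr{X}_\Lambda$ is smooth (hence normal) and $\mathscr{N}_\Lambda$ is reduced, Zariski's main theorem already forces the morphism to be an isomorphism. This is the route the paper takes; the crucial input is that the bijection of Corollary~\ref{cor:the bijection} was established over arbitrary field extensions $k'/k$ (via the Cohen-ring formulation of Section~\ref{ss:dieudonne and special}), not merely over $k$.

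One further issue: your explanation of the $\pm$-labelling via $\varrho\mapsto p\varrho$ is wrong. The scheme $\mathscr{N}$ is already the quotient $p^\Z\backslash\mathscr{M}$, so rescaling $\varrho$ by $p$ does not move a point between $\mathscr{N}^+$ and $\mathscr{N}^-$ -- indeed it does not change the associated special lattice $L$ at all, since the Dieudonn\'e lattice $D$ is replaced by $pD$ and the construction $D\mapsto L$ of Corollary~\ref{cor:the bijection} is manifestly homothety-invariant. The correct (and easier) point is simply that $\mathscr{N}_\Lambda$ is the disjoint union of $\mathscr{N}_\Lambda^\pm$, $\mathscr{X}_\Lambda$ is the disjoint union of its two connected components $\mathscr{X}_\Lambda^\pm$, and an isomorphism of $k$-schemes must match up connected components, possibly after relabelling.

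Lastly, a minor remark on Step~1: you define $\LL$ directly as the stabilizer of the Hodge filtration $\omega$ inside $(\Lambda/\Lambda^\vee)\otimes\co_S$. The paper instead defines $\LL^\sharp$ and a rank $(d+1)$ module $\mathcal{K}$ as images of kernels of $\psi$ and $\psi_1$ (corresponding to $L^\sharp=\Phi_*(L)$ and $L+L^\sharp$), and then produces $\LL$ via Lemma~\ref{easyperp} as the unique other rank $d$ isotropic summand inside $\mathcal{K}$. Both formulations require a fibrewise rank computation plus the Jacobson-ring argument over reduced $R$ to conclude that one has local direct summands; your version is fine once that is supplied, but you should not pass over it as automatic.
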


\begin{proof}
Let $R$ be  a reduced $k$-algebra of finite type.
 Given an $R$-valued point $(G, \iota, \lambda,\varrho)\in \mathscr{N}_\Lambda(R)$  there is an induced map of $\Z_p$-modules
\[
\Lambda^\vee \to \End(G)
\]
defined by $x\mapsto \varrho^{-1} \circ x \circ \varrho$.  Let $\mathcal{D}$ 
be the covariant Grothendieck-Messing crystal of 
$G$, evaluated at the trivial divided power thickening $\Spec(R)\to \Spec(R)$, 
so that $\mathcal{D}$ is a locally free $R$-module sitting in an exact sequence
\[
0 \to \mathcal{D}_1 \to \mathcal{D} \to \Lie(G) \to 0.
\]
The formation of the pair  $\mathcal{D}_1\subset \mathcal{D}$ is functorial in $G$, and so there
are  induced morphisms of $R$-modules
\[
 \psi: (\Lambda^\vee/p\Lambda^\vee) \otimes_{\F_p} R \to \End_R(\mathcal{D})
\]
and 
\[
 \psi_1: (\Lambda^\vee/p\Lambda^\vee) \otimes_{\F_p} R \to \End_R(\mathcal{D}_1)
 \]
 with $\ker(\psi)\subset \ker(\psi_1)$.
Given   $x\in \mathrm{ker}(\psi_1)$ and  $y\in (\Lambda^\vee / p\Lambda^\vee) \otimes_{\F_p} R$,  
the endomorphism
\[
[x , y] = x\circ y+y\circ x \in \End_R(\mathcal{D}_1)
\]
is  trivial.   Thus $\mathrm{ker}(\psi_1)$ is contained in the radical of the quadratic space 
$(\Lambda^\vee / p\Lambda^\vee) \otimes_{\F_p} R$, which is $(p\Lambda / p\Lambda^\vee) \otimes_{\F_p} R$. 
 Let  $\LL^\sharp\subset \mathcal{K}$
 be the images of $\ker(\psi) \subset \ker(\psi_1)$ under the obvious isomorphism
\[
(p\Lambda / p\Lambda^\vee) \otimes_{\F_p} R
\iso 
(\Lambda / \Lambda^\vee) \otimes_{\F_p} R \iso \Omega \otimes_k R.
\]

When $R=k$ the point $(G, \iota, \lambda ,\varrho)$ corresponds to some Dieudonn\'e lattice $D$
with $D_1=VD$,  and $\mathcal{D}_1 \subset \mathcal{D}$ is  canonically identified with  
$D_1/pD\subset  D/pD$.   Under these identifications
\begin{align*}
\mathrm{ker}(\psi) &= \{ x\in (p\Lambda/p\Lambda^\vee) \otimes_{\F_p} k : xD\subset pD \} \\
\mathrm{ker}(\psi_1) &= \{ x\in (p\Lambda/p\Lambda^\vee) \otimes_{\F_p} k : xD_1\subset pD \}, 
\end{align*}
and so 
\begin{align*}
\LL^\sharp  & = \{  x\in (\Lambda/\Lambda^\vee) \otimes k : xD\subset D \} \\
\mathcal{K}  & = \{  x\in (\Lambda/\Lambda^\vee) \otimes k : xD_1\subset D \}
\end{align*}
If we identify a subspace  of  $(\Lambda/\Lambda^\vee) \otimes k$ with the lattice in 
$\bm{L}_\Q$ that it generates, then  $\LL^\sharp$ corresponds to the lattice 
$L^\sharp = \{ x\in \bm{L}_\Q : x D\subset D\}$ of 
Theorem \ref{thm:the bijection}, and 
$\mathcal{K}$ corresponds to $L+L^\sharp=\{x\in \bm{L}_\Q : xD_1\subset D\}$.  In particular, $\LL^\sharp$  is totally isotropic of dimension $d$,
and $\mathcal{K}$ has dimension $d+1$. Moreover,  the quadratic space $\mathcal{K}/\mathcal{K}^\perp$
 is a hyperbolic plane, and so has precisely two isotropic lines.  One of them is $\LL^\sharp$, and the other is
 the subspace $\LL$ corresponding to 
 $L = \{ x\in \bm{L}_\Q : xD_1 \subset D_1 \}.$

For a general reduced $R$ of finite type, it follows from the previous paragraph
(use Exercise X.16 of \cite{Lang} and the  fact that $R$ is a Jacobson ring)  
that $\LL^\sharp$ is  a totally isotropic rank $d$ local direct  summand  of $ \Omega \otimes_k R$,
and $\mathcal{K}$ is a rank $d+1$ local direct summand.   By Lemma \ref{easyperp} there is 
a unique  totally isotropic rank $d$ local direct  summand  $\LL\not=\LL^\sharp$ of $ \Omega \otimes_k R$
contained in $\mathcal{K}$.  
As $\mathscr{N}_\Lambda$ is itself reduced and locally of finite type, 
the  construction $(G,\iota, \lambda,\varrho) \mapsto \LL$   defines a morphism of $k$-schemes
\[
\mathscr{N}_\Lambda \to \mathrm{OGr}(d)
\]
 inducing the desired bijection $ \mathscr{N}_\Lambda(k)\iso \mathscr{X}_\Lambda(k)$ on $k$-valued points. 
As the arguments of Section \ref{ss:dieudonne and special} were all done over an arbitrary extension of $k$, 
 the above morphism   induces a bijection  $ \mathscr{N}_\Lambda(k')\iso \mathscr{X}_\Lambda(k')$ for every  field extension $k'/k$.
The morphism   $\mathscr{N}_\Lambda \to \mathscr{X}_\Lambda$   is therefore 
  birational,  quasi-finite, and proper   (by Proposition \ref{prop:proj}).  
As $\mathscr{X}_\Lambda$ is smooth (and therefore normal),  
  Zariski's main theorem implies  $\mathscr{N}_\Lambda \iso \mathscr{X}_\Lambda$.  
  The claim about connected components is obvious.
\end{proof}


\subsection{The main results}
\label{ss:main}


Now we state our main results about the structure of the underlying reduced subscheme 
$\mathscr{N}_\mathrm{red}=\mathscr{N}_\mathrm{red}^+ \sqcup \mathscr{N}_\mathrm{red}^-$ of $\mathscr{N}$.
Recall from Section \ref{ss:vertex} that  $\mathscr{N}^\pm_\mathrm{red}$  is covered by the closed subschemes 
$\mathscr{N}_\Lambda^\pm$ as $\Lambda$ runs over the vertex lattices of type $t_\Lambda=2d_\Lambda\in \{2,4,6\}$ 
in the $6$-dimensional $\Q_p$-quadratic space $\bm{L}_\Q^\Phi$,
and that their intersections are given by the simple rule
\[
\mathscr{N}_{\Lambda_1}^\pm \cap \mathscr{N}_{\Lambda_2}^\pm = 
\begin{cases}
\mathscr{N}_{ \Lambda_1 \cap \Lambda_2}^\pm & \mbox{if $\Lambda_1\cap \Lambda_2$ is a vertex lattice} \\
\emptyset & \mbox{otherwise}
\end{cases}
\]
where, as before, the left hand side is understood to mean the reduced scheme underlying the scheme-theoretic intersection.
In other words, the combinatorics of the intersections are controlled by the combinatorics of the simplicial 
complex $\VV$ of Section \ref{ss:building}.

\begin{theorem}
The $k$-variety $\mathscr{N}_\Lambda^\pm$ is projective, smooth, and irreducible of dimension $d_\Lambda-1$.
   Moreover,
\begin{enumerate}
\item
if $d_\Lambda=1$ then $\mathscr{N}_\Lambda^\pm$ is  single point,
\item
if $d_\Lambda=2$ then $\mathscr{N}_\Lambda^\pm$ is isomorphic to  $\mathbb{P}^1$,
\item
if $d_\Lambda=3$ then $\mathscr{N}_\Lambda^\pm$ is isomorphic to the Fermat hypersurface 
\[
x_0^{p+1}+x_1^{p+1}+x_2^{p+1}+x_3^{p+1} = 0.
\]
\end{enumerate}
\end{theorem}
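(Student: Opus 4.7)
The plan is to reduce the entire statement to the explicit analysis of the orthogonal Deligne-Lusztig varieties $\mathscr{X}^\pm$ carried out in Section \ref{BTstrata} and Section \ref{ss:DL special cases}. The key identification is Theorem \ref{thm:comparison}, which gives an isomorphism of $k$-schemes $\mathscr{N}_\Lambda^\pm \iso \mathscr{X}_\Lambda^\pm$, where $\mathscr{X}_\Lambda$ is constructed from the $2d_\Lambda$-dimensional nonsplit $\F_p$-quadratic space $\Omega_0 = \Lambda/\Lambda^\vee$ equipped with the form $q(x) = pQ(x)$. The nonsplitness of $\Omega_0$ (crucial for invoking the results of Section \ref{BTstrata}) was already verified at the start of Section 3.4 as a consequence of the Hasse invariant computation in Proposition \ref{prop:hasse}: a Lagrangian in $\Omega_0$ would yield a self-dual $\Z_p$-lattice in $\bm{L}_\Q^\Phi$, contradicting Hasse invariant $-1$.

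Granted the identification $\mathscr{N}_\Lambda^\pm \iso \mathscr{X}_\Lambda^\pm$, the geometric statements follow from inputs already recorded. First I would cite the proposition in Section \ref{BTstrata} identifying $\mathscr{X}_\Lambda^\pm$ with the Deligne-Lusztig variety $X_{P^\pm}(1)$ for $G = \SO(\Omega)$, whose projectivity, smoothness, and irreducibility (of dimension $d_\Lambda - 1$) come from Remark \ref{rem:identity} (projectivity from the graph-of-Frobenius description, irreducibility from the Bonnafé–Rouquier criterion, together with the condition $\bigcup_r \Phi^r(I) = \Delta^*$ which holds here since the Frobenius swaps $t^+$ and $t^-$) and from the dimension formula in Proposition \ref{prop:DL properties}.

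For the three explicit cases I would simply quote the case-by-case analysis of Section \ref{ss:DL special cases}. When $d_\Lambda = 1$, part (a) of that section says $\mathscr{X}^\pm$ is a single point. When $d_\Lambda = 2$, part (b) uses the exceptional isomorphism $D_2 = A_1 \times A_1$ to exhibit $\mathscr{X}^\pm$ as the graph of Frobenius on $\mathbb{P}^1 \times \mathbb{P}^1$, so $\mathscr{X}^\pm \iso \mathbb{P}^1$. When $d_\Lambda = 3$, part (c) uses the exceptional isomorphism $D_3 = A_3$ to identify $\mathscr{X}^\pm$ with a unitary Deligne-Lusztig variety $X_{P_{(1,3)}}(1)$, which is cut out in $\mathbb{P}^3$ by the Fermat equation $x_0^{p+1} + x_1^{p+1} + x_2^{p+1} + x_3^{p+1} = 0$ after changing to an orthonormal basis of the (unique up to isometry) nondegenerate Hermitian form on $\F_{p^2}^4$.

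The main obstacle in this theorem has really already been surmounted, namely the identification $\mathscr{N}_\Lambda^\pm \iso \mathscr{X}_\Lambda^\pm$ of Theorem \ref{thm:comparison}; once that comparison is in hand, the proof of the present theorem is a matter of assembling the pieces. The only care needed is matching up the $\pm$ labels: Theorem \ref{thm:comparison} asserts the match only up to possibly swapping the two components of $\mathscr{X}_\Lambda$, which is harmless because the two components are abstractly isomorphic (they differ by the action of an orthogonal transformation of determinant $-1$, and in the cases above the Fermat hypersurface and $\mathbb{P}^1$ descriptions are symmetric in $\pm$).
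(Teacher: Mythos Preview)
Your proposal is correct and takes essentially the same approach as the paper, whose proof is the single line ``Combine Theorem \ref{thm:comparison} with the discussion of Section \ref{ss:DL special cases}.'' You have simply unpacked this more carefully, explicitly citing the proposition identifying $\mathscr{X}^\pm$ with $X_{P^\pm}(1)$ for the general geometric properties and noting the harmless ambiguity in the $\pm$ labeling.
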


\begin{proof}
Combine Theorem \ref{thm:comparison} with the discussion of Section \ref{ss:DL special cases}.
\end{proof}

\begin{theorem}
Under the isomorphism $\mathscr{X}_\Lambda^\pm \iso \mathscr{N}_\Lambda^\pm$, the stratification of Proposition  \ref{prop:stratification}
and the stratification \[ \mathscr{N}_\Lambda^\pm 
= \biguplus_{\Lambda'\subset \Lambda} \mathscr{N}_{\Lambda'}^{\pm \circ}\]
of Section \ref{ss:vertex}   are related by
\begin{equation}\label{eq:strata match}
X_{P_r}(w_r^\pm) \iso \biguplus_{ \substack{ \Lambda'\subset \Lambda \\   d_{\Lambda'}  = r+1 }} \mathscr{N}_{ \Lambda ' }^{\pm \circ} 
\end{equation}
for all $0\le r \le d_\Lambda-1$.  In particular (by taking $r=d_\Lambda-1$),  
the dense open subvariety $\mathscr{N}_\Lambda^{\pm \circ}$ 
 is isomorphic to the Deligne-Lusztig variety $X_B(w^\pm)$ associated with a Coxeter element.
\end{theorem}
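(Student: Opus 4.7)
The plan is to identify the two sides of (\ref{eq:strata match}) via the isomorphism $\mathscr{N}^\pm_\Lambda \iso \mathscr{X}^\pm_\Lambda$ of Theorem \ref{thm:comparison}, and then match them stratum-by-stratum on $k$-points. Since both sides are reduced locally closed subschemes of the same ambient variety of finite type over the algebraically closed field $k$, coincidence on $k$-points is enough. Under the bijection (\ref{pointwise bijection}), a $k$-point corresponds to a special lattice $L \subset \bm{L}_\Q$ satisfying $\Lambda^\vee \otimes_{\Z_p} W \subset L \subset \Lambda \otimes_{\Z_p} W$, together with its reduction $\mathcal{L} = L/(\Lambda^\vee \otimes W) \subset \Omega$. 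The two stratifications are phrased in dual combinatorial terms: Proposition \ref{prop:stratification} controls membership in $X_{P_r}(w^\pm_r)$ via the intersection filtration $\mathcal{L}^{(r)}_\cap = \mathcal{L} \cap \Phi(\mathcal{L}) \cap \cdots \cap \Phi^r(\mathcal{L})$ inside $\Omega$, while Proposition \ref{prop:zink} computes $\Lambda_L$ from the sum filtration $L^{(r)}_\Sigma = L + \Phi_*(L) + \cdots + \Phi_*^r(L)$ inside $\bm{L}_\Q$. The entire proof reduces to matching these two filtrations.

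The key step is a duality argument. Because $L$ is self-dual for $[\cdot,\cdot]$ and $[\Phi x, \Phi y] = [x,y]^\sigma$, a short induction shows that each $\Phi^i_*(L)$ is self-dual and contains $\Lambda^\vee \otimes W$, hence is squeezed into $\Lambda^\vee \otimes W \subset \Phi^i_*(L) \subset \Lambda \otimes W$. Consequently, inside $\bm{L}_\Q$ one has
\[
(L^{(r)}_\Sigma)^\vee = L \cap \Phi_*(L) \cap \cdots \cap \Phi^r_*(L) =: L^{(r)}_\cap,
\]
and reducing modulo $\Lambda^\vee \otimes W$ converts $W$-module duality into orthogonality of $k$-subspaces of $\Omega$ with respect to the induced quadratic form. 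Thus the images of $L^{(r)}_\Sigma$ and $L^{(r)}_\cap$ in $\Omega$ are $\mathcal{L}^{(r)}_\Sigma := \mathcal{L} + \Phi(\mathcal{L}) + \cdots + \Phi^r(\mathcal{L})$ and $\mathcal{L}^{(r)}_\cap$, respectively, and these are mutual perpendiculars. In particular the two filtrations stabilize at the same step, and the length of each jump agrees on the two sides.

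To finish, by Proposition \ref{prop:zink} the sum filtration first stabilizes at step $d = d(L)$, and $\Lambda_L$ is then a vertex lattice of type $2d$, so $d_{\Lambda_L} = d$. By the proof of Proposition \ref{prop:stratification}, $\mathcal{L} \in X_{P_r}(w^\pm_r)(k)$ iff the intersection filtration first stabilizes at step $r+1$. Combining these gives
\[
\mathcal{L} \in X_{P_r}(w^\pm_r)(k) \iff d_{\Lambda_L} = r+1,
\]
and ranging over points of $\mathscr{N}^\pm_\Lambda$ (for which $\Lambda_L \subset \Lambda$ by (\ref{special bijection})) yields (\ref{eq:strata match}). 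The case $r = d_\Lambda - 1$ forces $\Lambda_L = \Lambda$ as the only sublattice of $\Lambda$ with $d_{\Lambda_L} = d_\Lambda$, recovering $\mathscr{N}^{\pm\circ}_\Lambda \iso X_B(w^\pm)$. The step I expect to require the most care is the duality bookkeeping --- specifically checking that each $\Phi^i_*(L)$ is self-dual and sandwiched between $\Lambda^\vee \otimes W$ and $\Lambda \otimes W$, so that the sum filtration reduces cleanly to subspaces of $\Omega$; once this is in hand the rest of the argument is essentially formal.
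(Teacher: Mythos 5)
Your proof is correct and follows essentially the same route as the paper: reduce on $k$-points to comparing the ``sum'' filtration $L^{(r)} = L + \Phi(L) + \cdots + \Phi^r(L)$ (which controls $d_{\Lambda_L}$ via Proposition \ref{prop:zink}) with the ``intersection'' filtration $\underline{L}^{(r)} = L \cap \Phi(L) \cap \cdots \cap \Phi^r(L)$ (which controls membership in $X_{P_r}(w_r^\pm)$ via the proof of Proposition \ref{prop:stratification}), and observe that duality takes one filtration to the other so they stabilize at the same step. The paper simply asserts the equivalence of the two stopping conditions without spelling out the duality bookkeeping, whereas you fill that step in explicitly (self-duality of each $\Phi^i_*(L)$, the sandwich $\Lambda^\vee\otimes W\subset \Phi^i_*(L)\subset\Lambda\otimes W$, and $(L^{(r)})^\vee=\underline{L}^{(r)}$); this is the right justification for the step you flagged as requiring care.
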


\begin{proof}
For each special lattice $L$ we defined, in Proposition \ref{prop:zink}, a sequence of lattices
\[
L=L^{(0)} \subsetneq L^{(1)} \subsetneq \cdots\subsetneq L^{(d)} =L^{(d+1)}
\]
by
$
L^{(r)}= L + \Phi(L) + \cdots + \Phi^r(L),
$
and defined a type $2d$ vertex lattice \[\Lambda_L = \{ x\in L^{(d)} : \Phi(x) = x \}.\]  The bijection (\ref{special bijection}) identifies $\mathscr{N}_\Lambda^\pm(k)$ 
with the set of special lattices $L$ with $\Lambda_L\subset \Lambda$, and the $k$-points of the right hand side of
(\ref{eq:strata match}) correspond to those $L$ for which $\Lambda_L$ has type $2r+2$; in other words, those $L$ for which
\[
L=L^{(0)} \subsetneq L^{(1)} \subsetneq \cdots\subsetneq L^{(r+1)} =L^{(r+2)}.
\]
If we instead define
$
\underline{L}^{(r)}= L \cap \Phi(L) \cap \cdots \cap \Phi^r(L),
$
this condition is equivalent to
\[
\underline{L}^{(r+2)} = \underline{L}^{(r+1)} \subsetneq \cdots \subsetneq \underline{L}^{(1)} \subsetneq\underline{L}^{(0)}=L.
\]
In the proof of Proposition \ref{prop:stratification}, this is the same as the condition defining  the strata $X_{P_r}(w_r^\pm)$.
\end{proof}

\begin{theorem}\label{thm:connected}
The reduced $k$-scheme  $\mathscr{N}_\mathrm{red}$ is equidimensional of  dimension two.
It has two connected components,  $\mathscr{N}^+_\mathrm{red}$ and $\mathscr{N}^-_\mathrm{red}$, and these connected components 
are isomorphic.   The irreducible components of  $\mathscr{N}_\mathrm{red}$ are precisely the closed subschemes
$\mathscr{N}^\pm_\Lambda$ as $\Lambda$ varies over the type $6$ vertex lattices.  
Furthermore:
\begin{enumerate}
 \item
For each irreducible component $\mathscr{N}_\Lambda$, there are exactly $(p^3+1)(p+1)$  irreducible components 
$\mathscr{N}_{\Lambda'}$ such that $\mathscr{N}_\Lambda \cap \mathscr{N}_{\Lambda'} \iso \mathbb{P}^1$,
and $(p^3+1)(p^2+1)$  irreducible components 
$\mathscr{N}_{\Lambda'}$ such that $\mathscr{N}_\Lambda \cap \mathscr{N}_{\Lambda'}$ consists of a single point.
 \item
 For each type $4$ vertex lattice $\Lambda$, the closed subscheme $\mathscr{N}_\Lambda \iso \mathbb{P}^1$ is contained in 
exactly two irreducible components, and is equal to their intersection. 
\end{enumerate}
\end{theorem}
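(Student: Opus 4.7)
The plan is to combine the structural facts for individual $\mathscr{N}_\Lambda^\pm$ from the preceding two theorems with the combinatorics of the simplicial complex $\VV$ of vertex lattices from Section~\ref{ss:building}. First, each $\mathscr{N}_\Lambda^\pm$ with $t_\Lambda = 6$ is projective, smooth, and irreducible of dimension~$2$. The building analysis in Section~\ref{ss:building} implies every vertex lattice is contained in a type~$6$ one, so combined with the containment $\mathscr{N}_{\Lambda'}^\pm\subset\mathscr{N}_\Lambda^\pm$ for $\Lambda'\subset\Lambda$ and the Bruhat-Tits stratification we obtain
\[
\mathscr{N}_\mathrm{red}^\pm=\bigcup_{t_\Lambda=6}\mathscr{N}_\Lambda^\pm.
\]
Since distinct type~$6$ vertex lattices are incomparable for inclusion, these components are mutually non-contained and form exactly the set of irreducible components of $\mathscr{N}_\mathrm{red}^\pm$; in particular $\mathscr{N}_\mathrm{red}^\pm$ has pure dimension~$2$.

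Next, connectedness of $\mathscr{N}_\mathrm{red}^+$ follows from Corollary~\ref{cor:connected}: any two vertex lattices are joined by a chain of adjacent ones in $\VV$, and each adjacent pair $\Lambda_1\subset\Lambda_2$ gives the inclusion $\mathscr{N}_{\Lambda_1}^+\subset\mathscr{N}_{\Lambda_2}^+$, hence a nontrivial overlap. Chaining shows all $\mathscr{N}_\Lambda^+$ belong to one connected component; likewise for $\mathscr{N}_\mathrm{red}^-$. The isomorphism $\mathscr{N}^+\iso\mathscr{N}^-$ was already observed in Section~\ref{ss:RZ}: any $g\in J$ with $\ord_p(\nu(g))=1$ induces it, and this forces $\mathscr{N}^\pm_\mathrm{red}$ to be exactly the two connected components.

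For the intersection combinatorics, the formula $\mathscr{N}_\Lambda\cap\mathscr{N}_{\Lambda'}=\mathscr{N}_{\Lambda\cap\Lambda'}$ (when $\Lambda\cap\Lambda'$ is a vertex lattice) combined with the already-established descriptions of $\mathscr{N}_{\Lambda''}^\pm$ (a point for $t_{\Lambda''}=2$, a $\mathbb{P}^1$ for $t_{\Lambda''}=4$) reduces the problem to counting type~$4$ and type~$2$ sublattices of $\Lambda$. Part~(2) is then immediate from the building proposition: each type~$4$ vertex lattice $\Lambda_4$ lies in exactly two type~$6$ lattices $\Lambda,\Lambda'$, so $\mathscr{N}_\Lambda\cap\mathscr{N}_{\Lambda'}=\mathscr{N}_{\Lambda_4}\iso\mathbb{P}^1$. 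The numerical counts $(p^3+1)(p+1)$ and $(p^3+1)(p^2+1)$ in part~(1) equal the number of type~$4$ and type~$2$ sublattices of a given type~$6$ $\Lambda$ respectively, already enumerated as the number of irreducible components of the $1$-dimensional stratum and the number of points of the $0$-dimensional stratum of the Fermat surface $\mathscr{N}_\Lambda^\pm$ in Section~\ref{ss:DL special cases}.

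The most delicate step is the bijective correspondence in the point case: each type~$2$ sublattice $\Lambda_2\subset\Lambda$ should correspond to a \emph{unique} other type~$6$ irreducible component whose intersection with $\mathscr{N}_\Lambda^\pm$ is the single point $\mathscr{N}_{\Lambda_2}^\pm$. Establishing this proceeds by analyzing the $4$-dimensional $\F_p$-quadratic space $p^{-1}\Lambda_2^\vee/\Lambda_2$: the type~$6$ vertex lattices containing $\Lambda_2$ correspond to its maximal isotropic $2$-planes, and one identifies, for fixed $\Lambda$, the unique ``opposite'' such plane producing $\Lambda\cap\Lambda'=\Lambda_2$ exactly and matching the sign of $\mathscr{N}_\Lambda^\pm$. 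This linear-algebra bookkeeping is the combinatorial heart of the theorem.
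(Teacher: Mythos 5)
Your overall plan matches the paper's (very terse) proof: cover $\mathscr{N}_\mathrm{red}^\pm$ by the $\mathscr{N}_\Lambda^\pm$ with $t_\Lambda=6$, use Corollary~\ref{cor:connected} for connectedness, the $J$-action for $\mathscr{N}^+\iso\mathscr{N}^-$, the intersection formula from Section~\ref{ss:vertex}, and the sublattice counts from Section~\ref{ss:DL special cases}. The dimension/component/connectedness arguments and the $\mathbb{P}^1$-intersection count are correct.

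The final paragraph, however, contains a genuine error in exactly the step you flag as delicate. Work in the $4$-dimensional split $\F_p$-quadratic space $\Lambda_2^\vee/p\Lambda_2\iso p^{-1}\Lambda_2^\vee/\Lambda_2$ as you suggest. The type~$6$ vertex lattices $\Lambda'\supset\Lambda_2$ correspond bijectively to the Lagrangian ($2$-dimensional totally isotropic) subspaces of this space, of which there are $2(p+1)$ falling into two families (the two components of the orthogonal Grassmannian). Two distinct Lagrangians in the \emph{same} family meet in dimension $0$; Lagrangians in \emph{opposite} families meet in dimension $1$. Since $\Lambda\cap\Lambda'=\Lambda_2$ is equivalent to the corresponding Lagrangians $\Lambda/\Lambda_2$ and $\Lambda'/\Lambda_2$ meeting in $0$, the lattices $\Lambda'$ with $\Lambda\cap\Lambda'=\Lambda_2$ are precisely those whose Lagrangian lies in the \emph{same} family as $\Lambda/\Lambda_2$ and is distinct from it --- and there are $p$ of these, not one. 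Your proposed ``unique opposite plane'' is the wrong object on two counts: the Lagrangians in the opposite family each yield $\Lambda\cap\Lambda'$ of type~$4$ (a $\mathbb{P}^1$-intersection, not a point), and even within the correct family the correspondence $\Lambda_2\leftrightarrow\Lambda'$ is $p$-to-$1$, not a bijection. (There is also no ``sign'' constraint to invoke: every type~$6$ vertex lattice $\Lambda'$ contributes an irreducible component to \emph{both} $\mathscr{N}^+$ and $\mathscr{N}^-$, so signs cannot cut the count from $p$ down to $1$.) Carried out honestly, this lattice analysis gives $p\,(p^3+1)(p^2+1)$ type-$6$ lattices $\Lambda'$ with point-intersection, which does not match the figure $(p^3+1)(p^2+1)$ in the statement; since the paper's own proof reduces this claim to being ``clear from the discussion of Section~\ref{ss:DL special cases},'' the extra factor of $p$ appears to be an oversight there that your proposal reproduces rather than resolves.

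Everything else in your argument --- the identification of irreducible components, the connectedness chain via $\VV$, part~(2) via the two-to-one cover $\mathrm{OGr}(d-1,d)\to\mathrm{OGr}(d-1)$, and the $\mathbb{P}^1$ count of $(p^3+1)(p+1)$ --- is sound and matches the paper.
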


\begin{proof}
The isomorphism $\mathscr{N}_\mathrm{red}^+\iso \mathscr{N}_\mathrm{red}^-$ follows from the isomorphism $\mathscr{N}^+\iso \mathscr{N}^-$ of Section \ref{ss:RZ}.
The connectedness of $\mathscr{N}_\mathrm{red}^\pm$ follows from Corollary \ref{cor:connected}.
The remaining claims are clear from the Theorems above and the discussion of Section \ref{ss:DL special cases}.
\end{proof}


\subsection{Hermitian vertex lattices}
\label{ss:building2}


As in \cite{RTW}, \cite{Vol}, and \cite{VolWed}, it is possible to describe the stratification of $\mathscr{N}$ in terms of the 
Bruhat-Tits building of the  special unitary group $J^\mathrm{der}$, although in our setting the description in these terms is slightly convoluted.
Recall from Remark \ref{excRemark} the central isogeny  $J^\mathrm{der}\to \SO(\bm{L}_\Q^\Phi)$. 
Using \cite[4.2.15]{BTII} we see that this gives an identification of the building $\mathcal{BT}$ of 
$\SO(\bm{L}_\Q^\Phi)$, which was described in Section \ref{ss:building},   with the building  of $J^\mathrm{der}$. 
 Therefore, using \cite{Vol} and $J^\mathrm{der}\iso \mathrm{SU}(T)$, we can see that the underlying 
simplicial complex of the building  $\mathcal{BT}$ can also be 
described using $\co_E$-lattices $\Xi$ in the split Hermitian space $T$ of dimension $4$ over $E$.

We say that an $\co_E$-lattice $\Xi \subset T$  is a \emph{Hermitian vertex lattice} if 
\[
\Xi\subset \Xi^\vee\subset p^{-1}\Xi.
\]
The \emph{type} of $\Xi$ is $\dim_{\F_{p^2}}(\Xi^\vee/\Xi)$; the type  can be $0$, $2$ or $4$. 
As in \cite{Vol}, these Hermitian vertex lattices correspond
bijectively to the vertices of the Bruhat-Tits building of $\mathrm{SU}(T)$.
The action of the group $\mathrm{SU}(T)$  preserves the vertex type and is transitive on the set of vertices of a given type. 
The simplicial structure of the building of $\mathrm{SU}(T)$ is generated, as above, using 
a notion of adjacency, in which $\Xi$ and $\Xi'$ are adjacent
if either $\Xi\subset \Xi'$ or $\Xi'\subset \Xi$.
Consider now the identification of the buildings given by the central
isogeny $\mathrm{SU}(T)\to \SO(\bm{L}_\Q^\Phi)$. We can see by looking at the local Dynkin diagrams that  Hermitian vertex
lattices  $\Xi$ of type $0$ and $4$ are sent to vertex lattices $\Lambda$ of type $6$,  and Hermitian vertex lattices $\Xi$ of type $2$ are sent to 
vertex lattices $\Lambda$ of type $2$. Note that $\SO(\bm{L}_\Q^\Phi)$ acts transitively
on the set of vertex lattices of type $6$, but the map $\mathrm{SU}(T)\to \SO(\bm{L}_\Q^\Phi)$ is not surjective 
on $\Q_p$-points:   its image is the kernel of the spinor norm.

 Consider the set $\SS$ which is defined as the disjoint union of the set of Hermitian vertex lattices $\Xi$ 
with the set of all pairs $\{\Xi,\Xi'\}$ consisting of adjacent Hermitian vertex lattices of types $0$ and $4$.
Note that there is a natural bijection between the set $\SS$ and the set of all vertex lattices $\Lambda$.
Hermitian vertex lattices of type $0$ and $4$ in $\SS$ correspond to vertex lattices of type $6$,
Hermitian vertex lattices of type $2$ in $\SS$ correspond to vertex lattices of type $2$, and finally the  pairs $\{\Xi,\Xi'\}$ correspond 
to vertex lattices of type $4$.

 We define a partial order on  $\SS$ as follows.
For two Hermitian vertex lattices we define $\Xi < \Xi'$ if  either 
\begin{enumerate}
\item
$\Xi$ is of type $2$, $\Xi'$ is of type $0$, and  $\Xi\subset \Xi'$; or
\item
$\Xi$ is type $2$,  $\Xi'$ is of type $4$, and $\Xi'\subset \Xi$
\end{enumerate}
(so Hermitian vertex lattices of type $0$ and $4$ are not comparable). Two  pairs $\{\Xi_1,\Xi_1'\}$ and 
$\{\Xi_2,\Xi_2'\}$ in $\SS$ are not compared. If $\Xi$ is a Hermitian vertex lattice then $\Xi < \{ \Xi_1,\Xi_2\}$
if $\Xi$, $\Xi_1$, and $\Xi_2$ form a simplex in the building of $J^\mathrm{der}$ (which requires that $\Xi$ have type $2$).
Finally, $\{ \Xi_1,\Xi_2\}<\Xi$ if $\Xi \in \{ \Xi_1,\Xi_2\}$.   Under the bijection between $\SS$ and the set of vertex lattices this
partial order corresponds to  inclusion of vertex lattices.  Define an adjacency relation in $\SS$ by
$x\sim_{\SS} y$ if  either $x< y$ or $y<x$.  We also define a dimension function $d: \SS\to \{0,1,2\}$ by $d(x)=0$ if $x$ is a 
Hermitian vertex lattice of type $2$, $d(x)=2$ if $x$ is a Hermitian vertex lattice of type $0$ or $4$, and $d(x)=1$ if $x$ is a pair
$\{\Xi,\Xi'\}$.

 The following theorems are simply  restatements in this new language of some results of the previous subsection.

\begin{theorem}
Writing the reduced $k$-scheme as a union
\[
\mathscr{M}_\mathrm{red}=\biguplus\nolimits_{\ell\in \Z} \mathscr{M}^{(\ell)}_\mathrm{red}
\]
gives the decomposition of 
$\mathscr{M}_\mathrm{red}$ into its connected components $\mathscr{M}^{(\ell)}_\mathrm{red}$.  These connected components 
are all isomorphic and are of pure dimension $2$. 
\begin{enumerate}
\item
There is a stratification of $\mathscr{M}^{(0)}_\mathrm{red}$ by locally closed 
smooth subschemes given by 
\[
\mathscr{M}^{(0)}_\mathrm{red}=\biguplus_{ x\in\SS} \mathscr{M}^\circ_x.
\]
Each stratum $ \mathscr{M}^\circ_x$ is isomorphic to $\mathscr{N}^{+\circ}_\Lambda$,
where $\Lambda$ is the vertex lattice   that corresponds to $x$, and is therefore isomorphic to a Deligne-Lustzig variety of dimension $d(x)$. 
The closure  ${\mathscr{M}_x}$ of any $ \mathscr{M}^\circ_x$ in 
$\mathscr{M}^{(0)}_\mathrm{red}$ is 
\[
  {\mathscr{M}_x}=\biguplus_{y \leq x} \mathscr{M}^\circ_y.
\]

\item
We have $\mathscr{M}_y\subset  \mathscr{M}_x$ if and only if $y\leq x$.
In particular,  the irreducible components of  $\mathscr{M}^{(0)}_\mathrm{red}$ are precisely the closed subschemes 
$\mathscr{M}_\Xi$  for $\Xi\in\SS$ a Hermitian vertex lattice of type $0$ or $4$.

\item
 The schemes $\mathscr{M}_x$, are as follows:
\begin{enumerate}
\item
if $d(x)=0$ then $\mathscr{M}_x$ is  single point,
\item
if $d(x)=1$ then $\mathscr{M}_x$ is isomorphic to  $\mathbb{P}^1$,
\item
if $d(x)=2$ then $\mathscr{M}_x$ is isomorphic to the Fermat hypersurface 
\end{enumerate}
\[
x_0^{p+1}+x_1^{p+1}+x_2^{p+1}+x_3^{p+1} = 0.
\]
\end{enumerate}
\end{theorem}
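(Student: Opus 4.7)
The plan is to translate each assertion into the vertex-lattice language of Section \ref{ss:main} via the bijection $\SS\iso\{\text{vertex lattices in }\bm{L}_\Q^\Phi\}$ described immediately before the theorem, and then invoke the results already proved there.

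First, the action of any $g\in J$ with $\ord_p\nu(g)=1$ yields isomorphisms $\mathscr{M}^{(\ell)}\iso\mathscr{M}^{(\ell+1)}$, so all the $\mathscr{M}^{(\ell)}_\mathrm{red}$ are pairwise isomorphic, and the quotient map $\mathscr{M}\to\mathscr{N}$ restricts to $\mathscr{M}^{(0)}\iso\mathscr{N}^+$ and $\mathscr{M}^{(1)}\iso\mathscr{N}^-$. Theorem \ref{thm:connected} then gives connectedness and pure dimension $2$ of each $\mathscr{M}^{(\ell)}_\mathrm{red}$, so these are exactly the connected components of $\mathscr{M}_\mathrm{red}$.

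Second, I would verify that the bijection $\SS\iso\{\text{vertex lattices}\}$ is order-preserving when the target carries the inclusion partial order. There are three non-trivial checks: (i) $\Xi<\Xi'$ between Hermitian vertex lattices corresponds to inclusion of the associated type $2$ and type $6$ vertex lattices; (ii) $\{\Xi_1,\Xi_2\}<\Xi_i$ corresponds to $\Lambda_1\cap\Lambda_2\subset\Lambda_i$, which is immediate from the fact (Section \ref{ss:building}) that the type $4$ vertex lattice associated to the pair is $\Lambda_1\cap\Lambda_2$; and (iii) $\Xi<\{\Xi_1,\Xi_2\}$ for $\Xi$ of type $2$ corresponds to $\Lambda\subset\Lambda_1\cap\Lambda_2$, which is the condition that $\Xi,\Xi_1,\Xi_2$ form a $2$-simplex in $\mathcal{BT}$.

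Third, with this bijection in hand, I would define $\mathscr{M}^\circ_x$ to be the image of $\mathscr{N}^{+\circ}_\Lambda$ under $\mathscr{N}^+\iso\mathscr{M}^{(0)}$, where $\Lambda$ corresponds to $x\in\SS$. Claim (1) then follows from the Bruhat-Tits stratification (\ref{bruhat-tits strat}) together with the identification of each $\mathscr{N}^{+\circ}_\Lambda$ with a Deligne-Lusztig variety $X_B(w^+)$ for a Coxeter element of dimension $d_\Lambda-1=d(x)$ from the second theorem in Section \ref{ss:main}; the closure formula follows from $\mathscr{N}^+_\Lambda=\biguplus_{\Lambda'\subset\Lambda}\mathscr{N}^{+\circ}_{\Lambda'}$ combined with the order-preservation of the bijection. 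Claim (2) is immediate from the order-preserving bijection, the irreducible components being exactly the maximal elements of $\SS$, i.e.\ Hermitian vertex lattices of type $0$ or $4$, which correspond to type $6$ vertex lattices. Claim (3) is a direct restatement of the first theorem in Section \ref{ss:main}, noting that $d(x)=d_\Lambda-1$ in each case.

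The main obstacle is the order-matching in Step 2: the two classes of maximal vertices of the building of $\mathrm{SU}(T)$ (types $0$ and $4$) both correspond to type $6$ vertex lattices of $\bm{L}_\Q^\Phi$, and distinguishing them requires careful comparison of the local Dynkin diagrams on either side of the central isogeny $\mathrm{SU}(T)\to\SO(\bm{L}_\Q^\Phi)$. The fact that this isogeny is not surjective on $\Q_p$-points (its image is the kernel of the spinor norm) is not an issue at the combinatorial level, since the identification of buildings in \cite[4.2.15]{BTII} is canonical; once the combinatorial dictionary between $\SS$ and vertex lattices is in place, the rest is a routine transcription of results from Section \ref{ss:main}.
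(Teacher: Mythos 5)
Your proof takes essentially the same approach as the paper: the theorem is, as the paper itself says, ``simply a restatement in this new language of some results of the previous subsection,'' and both you and the paper reduce to verifying that the bijection between $\SS$ and the set of vertex lattices is order-preserving (with the order on vertex lattices being inclusion) and then transcribing Theorem \ref{thm:connected} and the two preceding theorems of Section \ref{ss:main} via that dictionary. Your three explicit checks of order-preservation, the observation that $d(x)=d_\Lambda-1$ in all three cases, and the use of the closure formula from Proposition \ref{prop:stratification} to identify $\mathscr{M}_x$ with $\mathscr{N}^+_\Lambda$ are all correct and fill in details the paper leaves implicit.
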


\begin{theorem}
The irreducible  components of $\mathscr{M}^{(0)}_\mathrm{red}$ 
are parametrized by vertices of type $0$ and $4$ in the Bruhat-Tits building of $J^\mathrm{der}$.
Two irreducible components $\mathscr{M}_\Xi$ and $\mathscr{M}_{\Xi'}$
intersect if and only if either $\Xi$ and $\Xi'$ are either adjacent, or are adjacent to   common element of $\SS$.
If they are adjacent then one is type $0$, the other of type $4$, and   they intersect along 
a $\mathbb{P}^1$. If they are not adjacent but  have a common adjacent point  $y\in \SS$, then
$y$ is a Hermitian vertex lattice of type $2$, and $\mathscr{M}_{\Xi} \cap \mathscr{M}_{\Xi'} = \mathscr{M}_y$ is a single point.
\end{theorem}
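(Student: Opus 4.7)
The theorem is a translation of Theorem \ref{thm:connected} into the Hermitian vertex lattice language via the bijection $\SS \leftrightarrow \{\text{vertex lattices}\}$ constructed in Section \ref{ss:building2}. The plan is therefore to transport the intersection formula of Section \ref{ss:vertex} through this bijection and verify the combinatorial matching. The isomorphism $\mathscr{M}^{(0)}_\mathrm{red} \iso \mathscr{N}^+_\mathrm{red}$ of Section \ref{ss:RZ} identifies each closed subscheme $\mathscr{M}_x$ with $\mathscr{N}^+_{\Lambda_x}$, where $\Lambda_x$ is the vertex lattice associated to $x \in \SS$. By Theorem \ref{thm:connected}, the irreducible components of $\mathscr{N}^+_\mathrm{red}$ are precisely the $\mathscr{N}^+_\Lambda$ for $\Lambda$ of type $6$, which under the bijection correspond to the $\mathscr{M}_\Xi$ for $\Xi$ a Hermitian vertex lattice of type $0$ or $4$; this yields the parametrization.

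For the intersection statement, recall from Section \ref{ss:vertex} that $\mathscr{N}^+_{\Lambda} \cap \mathscr{N}^+_{\Lambda'} = \mathscr{N}^+_{\Lambda \cap \Lambda'}$ when $\Lambda \cap \Lambda'$ is a vertex lattice, and is empty otherwise. Write $\Lambda = \Lambda_\Xi$ and $\Lambda' = \Lambda_{\Xi'}$. The analysis splits into two cases. If $\Xi \subset \Xi'$ or $\Xi' \subset \Xi$ (necessarily between a type $0$ and a type $4$ Hermitian vertex lattice), then by the very construction of the $\SS$-correspondence the pair $\{\Xi, \Xi'\} \in \SS$ is sent to a type $4$ vertex lattice equal to $\Lambda \cap \Lambda'$, and Theorem \ref{thm:connected}(ii) identifies the intersection with a $\mathbb{P}^1$. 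If instead $\Xi$ and $\Xi'$ are not adjacent but admit a common $y \in \SS$ with $y < \Xi$ and $y < \Xi'$, then the partial order on $\SS$ forces $y$ to be a Hermitian vertex lattice of type $2$; the associated type $2$ vertex lattice $\Lambda_y$ lies inside both $\Lambda$ and $\Lambda'$, and one verifies $\Lambda \cap \Lambda' = \Lambda_y$ (any strictly larger common sub-vertex-lattice would produce a pair in $\SS$ simultaneously below $\Xi$ and $\Xi'$, contradicting non-adjacency). Theorem \ref{thm:connected}(i) then gives a single point.

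The converse will be the main obstacle: if $\Xi$ and $\Xi'$ share no common element in $\SS$ below them, I must show that $\Lambda \cap \Lambda'$ fails to be a vertex lattice. The plan is to invoke the identification of buildings $\mathcal{BT}(J^\mathrm{der}) = \mathcal{BT}(\SO(\bm{L}_\Q^\Phi))$ from \cite[4.2.15]{BTII} discussed in Section \ref{ss:building}: any vertex lattice $\Lambda_0 \subset \Lambda \cap \Lambda'$ would correspond under the bijection to an element $y \in \SS$ with $y < \Xi$ and $y < \Xi'$, contradicting the hypothesis. The intersection formula of Section \ref{ss:vertex} then forces $\mathscr{N}^+_\Lambda \cap \mathscr{N}^+_{\Lambda'} = \emptyset$, completing the proof.
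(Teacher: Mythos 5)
Your proposal is correct and takes essentially the same approach as the paper, which treats this theorem (and the one preceding it) as a direct restatement of the results of Section~\ref{ss:main} and Theorem~\ref{thm:connected}, transported through the bijection between $\SS$ and the set of vertex lattices and the isomorphism $\mathscr{M}^{(0)}_\mathrm{red}\iso\mathscr{N}^+_\mathrm{red}$. The paper itself offers no argument beyond the remark that ``the following theorems are simply restatements''; you fill in the combinatorial verification, which is the right thing to do. One small caveat: the references to ``Theorem~\ref{thm:connected}(i)'' and ``(ii)'' for identifying the intersections with a point and a $\mathbb{P}^1$ are slightly off --- those identifications come from the earlier theorem computing $\mathscr{N}^\pm_\Lambda$ for $d_\Lambda=1,2$, while parts (1) and (2) of Theorem~\ref{thm:connected} give the counts and the ``contained in exactly two components'' statement --- but the underlying logic is sound.
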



\section{Applications to Shimura varieties}
\label{s:global}


In this section we use our  explicit description of the Rapoport-Zink space $\mathscr{N} = p^\Z\backslash \mathscr{M}$
to describe the supersingular locus of a $\GU(2,2)$-Shimura variety.  
With  the results of Section \ref{ss:main} in hand, this is exactly as in the $\GU(n-1,1)$ cases studied in 
\cite{RTW} and \cite{VolWed}.  Accordingly, our discussion will be brief.


\subsection{The Shimura variety}


Let   $E\subset \C$  be a quadratic imaginary field, fix a prime $p>2$ inert in $E$,
and let $\co\subset E$ be the integral closure of   $\Z_{(p)}$ in $E$.
Let $V$ be a free $\co$-module of rank $4$ endowed with a perfect $\co$-valued Hermitian form
$\langle\cdot,\cdot\rangle$ of signature $(2,2)$, and denote by $G=\GU(V)$ the group of unitary similitudes
of $V$.  It is a reductive group over $\Z_{(p)}$.   Fix a compact open subgroup $U^p \subset G(\A_f^p)$, and define
$U_p=G(\Z_p)$ and $U=U_pU^p \subset G(\A_f)$.

The Grassmannian $\mathcal{D}$ of negative definite planes in $V\otimes_{\co}  \C$ is a smooth 
complex manifold of dimension $4$, with an action of $G(\R)$.  Define
\[
M_U(\C) = G(\Q) \backslash  (  \mathcal{D} \times G(\A_f) / U ).
\]
For sufficiently small $U^p$, this is a smooth complex manifold parametrizing prime-to-$p$ isogeny classes of 
quadruples $(A, \iota, \lambda , [\eta^p])$,  in which $A$ is an abelian variety of dimension $4$, 
$\iota : \co \to \End(A)_{(p)}$ is a ring homomorphism such that 
\[
\det( T - \iota(\alpha) ; \Lie(A) ) = (T-\alpha)^2 ( T-\overline{\alpha})^2
\]
for all $\alpha\in \co$, $\lambda \in \Hom(A,A^\vee)_{(p)}$  a prime-to-$p$-quasi-polarization 
satisfying 
\[
\lambda \circ \iota (\overline{\alpha}) = \iota (\alpha)^\vee \circ \lambda
\]
for all $\alpha\in \co$, and 
$[\eta^p]$ is the $U^p$-orbit of an $\co \otimes \A_f^p$-linear isomorphism
\[
\eta^p:   \widehat{\mathrm{Ta}}^p(A)\otimes \A_f^p \to V \otimes \A_f^p  
\]
respecting the Hermitian forms up to scaling by 
$(\A_f^p)^\times$ (the Hermitian form on the source is determined by $\lambda$, as in (\ref{hermite})).
A \emph{prime-to-$p$-isogeny} between two such pairs $(A, \iota, \lambda , [\eta^p])$ and 
$(A', \iota', \lambda' , [\eta^{p\prime} ] )$  is an $\co$-linear quasi-isogeny in $\Hom(A,A')_{(p)}$
of degree prime to $p$ that respects the level structures, and such that $\lambda'$ pulls back to a $\Z_{(p)}^\times$-multiple 
of $\lambda$.

The parametrization is similar to the constructions found in \cite{KR}, and can be described as follows.
For each triple $(A,i,\lambda,[\eta])$  above,
the existence of $\eta^p$ implies that $H_1(A,\Q)$ and $V\otimes \Q$
are isomorphic, as Hermitian spaces, locally at all places  $v\nmid p$.  
But this implies  that they are also isomorphic at $p$, and hence there is a global isomorphism 
\[
\beta: H_1(A,\Q) \to V\otimes \Q.
\]
As $\mathrm{Ta}_p(A) \otimes \Q_p \iso V \otimes \Q_p$,  a  result of Jacobowitz, stated  in \cite[Proposition 2.14]{KR}, shows that there is 
a unique $U_p$-orbit of isomorphisms $\mathrm{Ta}_p(A) \iso V \otimes \Z_p$ compatible with the $\co$-actions 
and Hermitian forms.  Thus there is a 
unique way to extend $\eta^p$ to a $U$-orbit of isomorphisms
\[
\eta: \widehat{\mathrm{Ta}}(A)\otimes \A_f \iso V \otimes \A_f
\] 
compatible with the  $\co$-actions and the symplectic forms, and identifying $\mathrm{Ta}_p(A)$ with $V \otimes \Z_p$.  The composition
 \[
 V\otimes \A_f \map{\eta^{-1} } H_1(A,\A_f) \map{\beta} V\otimes\A_f
 \]
 defines an element $g\in G(\A_f)/U$, and the Hodge structure on $V\otimes\R$ induced by the 
 isomorphism $\beta$ corresponds to a point of $\mathcal{D}$, as in \cite[Section 3]{KR}.


\subsection{The uniformization theorem}


Let $k$ be an algebraic closure of the field of $p$ elements.

Extending the moduli problem of the previous subsection to $\Z_{(p)}$-schemes in the obvious way yields a scheme $M_U$,
smooth of relative dimension $4$  over $\Z_{(p)}$.  Denote by $M_U^\mathrm{ss}$  the reduced supersingular locus 
of the geometric special fiber $M_U \times_{\Z_{(p)} }k$. A choice of geometric point $(\bm{A},\bm{\iota},  \bm{\lambda} , [\bm{\eta}] ) \in M_U^\mathrm{ss}(k)$
determines  a base point $(\bm{G},\bm{\iota},\bm{\lambda})$ with $\bm{G}=\bm{A}[p^\infty]$, and so 
defines a Rapoport-Zink space $\mathscr{M}$ as in Section \ref{ss:RZ}, endowed with an action of the subgroup $J\subset \End(\bm{G})_\Q^\times$.
Denote by $I(\Q) \subset \End(A)_\Q^\times$ the subgroup of $\co$-linear quasi-automorphisms that preserve
the $\Q^\times$-span of $\bm{\lambda}$.  It is the group of $\Q$-points of an algebraic group $I$ over $\Q$
satisfying  $I(\Q_p)\iso J$, and the orbit $[\bm{\eta}]$ determines  a right $U^p$-orbit of isomorphisms $I(\A_f^p) \iso G(\A_f^p)$.
In particular, $I(\Q)$ acts on both $\mathscr{M}$ and on $G(\A_f^p)/U^p$.

\begin{theorem}[Rapoport-Zink]
There is an isomorphism of $k$-schemes
\[
M_U^\mathrm{ss}  \iso  I(\Q) \backslash ( \mathscr{M}_\mathrm{red} \times G(\A_f^p) /U^p).
\]
\end{theorem}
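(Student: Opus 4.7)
The plan is to apply the general $p$-adic uniformization theorem of Rapoport--Zink \cite{RZ}. First I would construct the comparison map on $k$-points. Given a representative $((G,\iota,\lambda,\varrho), g) \in \mathscr{M}_\mathrm{red}(k)\times G(\A_f^p)/U^p$, the quasi-isogeny $\varrho : G \to \bm{G}=\bm{A}[p^\infty]$, combined with the equivalence between abelian varieties up to prime-to-$p$ isogeny and pairs consisting of a $p$-divisible group together with a compatible rational prime-to-$p$ Tate module, produces an abelian fourfold $A$ and an $\co$-linear quasi-isogeny $\widetilde{\varrho}:A\to\bm{A}$ whose restriction to $p$-divisible groups is $\varrho$. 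The structures $\iota,\lambda$ on $G$ extend uniquely to $A$, and the level structure $[\eta^p]$ is obtained by composing $\widetilde{\varrho}_* : \widehat{\mathrm{Ta}}^p(A)\otimes\Q \iso \widehat{\mathrm{Ta}}^p(\bm{A})\otimes\Q$ with $\bm{\eta}^p$ and with $g^{-1}$.

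Next I would verify that this assignment factors through the diagonal $I(\Q)$-action: $\gamma\in I(\Q)$ acts on $\mathscr{M}$ through its image in $J\iso I(\Q_p)$ and on $G(\A_f^p)/U^p$ via the away-from-$p$ identification $I(\A_f^p)\iso G(\A_f^p)$ determined by $[\bm{\eta}]$, and simultaneously replacing $\varrho$ by $\gamma\circ\varrho$ and $g$ by $\gamma g$ yields an isomorphic quadruple. Bijectivity on $k$-points then reduces to two statements: surjectivity, that every supersingular $k$-point $(A,\iota_A,\lambda_A,[\eta^p])\in M_U^\mathrm{ss}(k)$ admits an $\co_E$-linear quasi-isogeny $A[p^\infty]\to\bm{G}$ respecting polarizations up to $\Q_p^\times$-scaling (which follows from the uniqueness, up to isomorphism, of the basic isocrystal with $\co_E$-action and polarization of signature $(2,2)$, together with the Tate--Honda classification for supersingular abelian varieties); and injectivity, that the remaining ambiguity in the choice of quasi-isogeny is precisely the action of $I(\Q)$.

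To upgrade this pointwise construction to an isomorphism of $k$-schemes I would invoke Serre--Tate: the formal neighborhood of a supersingular point in $M_U\times_{\Z_{(p)}} W$ is canonically identified with the equi-characteristic formal deformation space of its $p$-divisible group with additional structure, which is precisely the formal completion of $\mathscr{M}$ at the corresponding point. Hence the map induces isomorphisms on all formal neighborhoods, so the underlying map of reduced $k$-schemes is an isomorphism.

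The main obstacle is the identification of the supersingular locus of $M_U\times_{\Z_{(p)}} k$ with the basic locus, which is what makes Rapoport--Zink uniformization applicable. For the $\GU(2,2)$ datum at an inert prime the unique basic $\sigma$-conjugacy class in $B(G_{\Q_p})$ has Newton polygon with all slopes equal to $1/2$, so the basic stratum coincides with the supersingular stratum; once this is in place, the statement is an instance of the general uniformization theorem of \cite{RZ}, whose hypotheses (existence of a well-defined Rapoport--Zink space, agreement of the local model, properness of the basic stratum, etc.) have already been verified throughout Section \ref{s:moduli}.
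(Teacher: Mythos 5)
The paper gives no proof of this statement at all: it is presented as a black-box citation of the general $p$-adic uniformization theorem of Rapoport--Zink \cite{RZ} (Chapter 6 of \emph{loc.~cit.}), applied to the PEL datum set up in Section \ref{s:global}. Your proposal is a correct sketch of the content of that theorem and of why its hypotheses hold in the $\GU(2,2)$, inert-prime setting, so it is essentially the same route: you construct the uniformization map from the $\co$-linear quasi-isogeny to the fixed base abelian variety $\bm{A}$, check equivariance under $I(\Q)$, argue bijectivity on geometric points by the classification of polarized supersingular isocrystals with $\co_E$-action, upgrade to a scheme isomorphism by Serre--Tate, and note that basicness coincides with supersingularity because the unique basic $\sigma$-conjugacy class is isoclinic of slope $1/2$. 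All of this is the content of the theorem the paper cites.

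Two small imprecisions worth fixing. First, the phrase \emph{equi-characteristic formal deformation space} is off: Serre--Tate identifies deformations of $(A,\iota,\lambda)$ over Artinian local $W$-algebras with deformations of the $p$-divisible group with structure, and these are mixed-characteristic deformations; the uniformization in \cite{RZ} is actually an isomorphism of formal $W$-schemes, from which the stated isomorphism of reduced $k$-schemes is obtained by passing to special fibers and reduced subschemes. Second, ``properness of the basic stratum'' is not literally a hypothesis of the theorem in \cite{RZ}; what is needed is that the basic locus is closed in the special fiber (so that the right-hand side makes sense as a scheme locally of finite type) together with the fact, which you do invoke, that any two basic points are connected by a quasi-isogeny compatible with the extra structure. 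Neither point affects the correctness of your overall argument.
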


As in \cite[Corollary 6.2]{Vol}, combining the above uniformization theorem with the results of Section \ref{ss:main}
yields the following corollary.

\begin{corollary}
The $k$-scheme $M_U^\mathrm{ss}$ has pure dimension $2$.  For $U^p$ sufficiently small, all irreducible components of 
$M_U^\mathrm{ss}$ are isomorphic to the Fermat hypersurface 
\[
x_0^{p+1}+x_1^{p+1}+x_2^{p+1} + x_3^{p+1}=0,
\]
and any two irreducible components intersect either trivially, 
intersect at a single point, or their intersection is isomorphic to $\mathbb{P}^1$. 
Here ``intersection" is understood to mean the reduced scheme underlying the scheme-theoretic intersection.
\end{corollary}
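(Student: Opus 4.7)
The plan is to deduce the corollary from the Rapoport--Zink uniformization just stated, combined with the description of $\mathscr{M}_\mathrm{red}$ given in Theorem \ref{thm:connected} and the theorems of Section \ref{ss:building2}. Choosing a finite set of representatives $g_1,\ldots,g_r$ for the double cosets $I(\Q)\backslash G(\A_f^p)/U^p$, the uniformization isomorphism identifies
\[
M_U^\mathrm{ss} \iso \biguplus_{j=1}^r \Gamma_j \backslash \mathscr{M}_\mathrm{red},
\]
where $\Gamma_j = I(\Q) \cap g_j U^p g_j^{-1}$ (intersection taken inside $I(\A_f^p) \iso G(\A_f^p)$) acts on $\mathscr{M}_\mathrm{red}$ through the embedding $I(\Q) \hookrightarrow I(\Q_p) \iso J$ of Section \ref{ss:RZ}. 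Pure dimensionality of $M_U^\mathrm{ss}$ is then immediate: by the theorems of Section \ref{ss:building2}, every connected component $\mathscr{M}^{(\ell)}_\mathrm{red}$ of $\mathscr{M}_\mathrm{red}$ is of pure dimension two, and taking a quotient by a group action preserves this.

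The core of the argument is to show that, for $U^p$ sufficiently small, the quotient map $\mathscr{M}_\mathrm{red} \to \Gamma_j \backslash \mathscr{M}_\mathrm{red}$ restricts to an isomorphism on each irreducible component and preserves the pattern of intersections. The action of $J$ on $\mathscr{M}_\mathrm{red}$ permutes the irreducible components $\mathscr{M}_\Xi$ in the manner described by Remark \ref{rem:vertex action} and Section \ref{ss:building2}, and the stabilizer in $J$ of any such $\Xi$ is compact modulo the central subgroup $p^\Z$. As $U^p$ runs through a neighborhood basis of the identity, the groups $\Gamma_j$ become arbitrarily small; a standard neatness argument, parallel to \cite[Corollary 6.2]{Vol} and to the treatment in \cite{VolWed}, then guarantees that for $U^p$ small enough the stabilizer in each $\Gamma_j$ of any irreducible component of $\mathscr{M}_\mathrm{red}$ acts trivially on that component, and that distinct irreducible components identified by $\Gamma_j$ are carried isomorphically one onto another.

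Granting this, the claims of the corollary transfer directly from $\mathscr{M}_\mathrm{red}$ to $M_U^\mathrm{ss}$: by Theorem \ref{thm:connected} and the final theorem of Section \ref{ss:building2}, every irreducible component of $\mathscr{M}_\mathrm{red}$ is isomorphic to the Fermat hypersurface $x_0^{p+1}+x_1^{p+1}+x_2^{p+1}+x_3^{p+1}=0$, and the intersection of any two distinct irreducible components is empty, a single point, or a $\mathbb{P}^1$; under the isomorphism on components and the preservation of incidences, the same holds for $M_U^\mathrm{ss}$. The one genuine obstacle in this plan is the verification that $U^p$ can indeed be shrunk enough to rule out accidental self-identifications inside a component or collapsed intersections between components; this is a standard input in the Rapoport--Zink uniformization framework and does not use anything specific to the $\GU(2,2)$ case.
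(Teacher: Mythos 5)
Your proposal is correct and takes essentially the same approach as the paper, which simply defers to \cite[Corollary 6.2]{Vol} for the standard neatness argument; you spell out the intermediate steps (decomposition over double cosets $I(\Q)\backslash G(\A_f^p)/U^p$, transfer of local structure, and the shrinking of $U^p$ to avoid self-identifications and collapsed intersections) that the paper and Vollaard leave implicit.
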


\medskip


\begin{thebibliography}{10}

\bibitem{Bass}
H.~Bass.
\newblock Clifford algebras and spinor norms over a commutative ring.
\newblock {\em Amer. J. Math.}, 96:156--206, 1974.

\bibitem{BonnafeRouquier}
C.~Bonnaf{\'e} and R.~Rouquier.
\newblock On the irreducibility of {D}eligne-{L}usztig varieties.
\newblock {\em C. R. Math. Acad. Sci. Paris}, 343(1):37--39, 2006.

\bibitem{BTII}
F.~Bruhat and J.~Tits.
\newblock Groupes r\'eductifs sur un corps local. {II}. {S}ch\'emas en groupes.
  {E}xistence d'une donn\'ee radicielle valu\'ee.
\newblock {\em Inst. Hautes \'Etudes Sci. Publ. Math.}, (60):197--376, 1984.

\bibitem{Bul}
O.~B\"ultel.
\newblock On the supersingular loci of quaternionic Siegel space.
\newblock {\em Preprint, arXiv:1209.3548}.

\bibitem{DeligneLusztig}
P.~Deligne and G.~Lusztig.
\newblock Representations of reductive groups over finite fields.
\newblock {\em Ann. of Math. (2)}, 103(1):103--161, 1976.

\bibitem{FH}
E.~Freitag and C.~Hermann.
\newblock Some modular varieties of low dimension.
\newblock {\em Adv. Math.}, 152(2):203--287, 2000.

\bibitem{bookGarrett}
P.~Garrett.
\newblock {\em Buildings and classical groups}.
\newblock Chapman \& Hall, London, 1997.


\bibitem{GH}
U.~G\"ortz and X.~He.
\newblock {\em Basic loci in Shimura varieties of Coxeter type}.
\newblock {\em Preprint, arXiv:1311.6263}

\bibitem{Kim}
W.~Kim.
\newblock Rapoport-{Z}ink spaces of {H}odge type.
\newblock {\em Preprint, arXiv:1308.5537}.

\bibitem{Kisin}
M.~Kisin
\newblock Integral models for Shimura varieties of abelian type.
\newblock {\em J.~Amer.~Math.~Soc.}, 23(4):  967--1012, 2010.

\bibitem{BookInvolutions}
M.-A. Knus, A.~Merkurjev, M.~Rost, and J.-P. Tignol.
\newblock {\em The book of involutions}, volume~44 of {\em American
  Mathematical Society Colloquium Publications}.
\newblock American Mathematical Society, Providence, RI, 1998.
\newblock With a preface in French by J. Tits.

\bibitem{KR}
S.~Kudla and M~Rapoport.
\newblock Special cycles on unitary {S}himura varieties {II}: global theory.
\newblock {\em Preprint, arXiv:0912.3758}.

\bibitem{KR1}
S.~Kudla and M.~Rapoport.
\newblock Arithmetic {H}irzebruch-{Z}agier cycles.
\newblock {\em J. Reine Angew. Math.}, 515:155--244, 1999.

\bibitem{KR2}
S.~Kudla and M.~Rapoport.
\newblock Cycles on {S}iegel threefolds and derivatives of {E}isenstein series.
\newblock {\em Ann. Sci. \'Ecole Norm. Sup. (4)}, 33(5):695--756, 2000.

\bibitem{KRlocal}
S.~Kudla and M.~Rapoport.
\newblock Special cycles on unitary {S}himura varieties {I}. {U}nramified local
  theory.
\newblock {\em Invent.~Math.}, 184(3):629--682, 2011.

\bibitem{Lang}
S.~Lang.
\newblock {\em Algebra}, volume 211 of {\em Graduate Texts in Mathematics}.
\newblock Springer-Verlag, New York, third edition, 2002.

\bibitem{MP}
K.~Madapusi Pera.
\newblock Integral canonical models for Spin Shimura varieties.
\newblock {\em Preprint, arXiv:1212.1243}.


\bibitem{RTW}
M.~Rapoport, U.~Terstiege, and S.~Wilson.
\newblock The supersingular locus of the {S}himura variety for {${\rm
  GU}(1,n-1)$} over a ramified prime.
\newblock {\em Preprint, arXiv:1301.1226}.

\bibitem{RZ}
M.~Rapoport and Th. Zink.
\newblock {\em Period spaces for {$p$}-divisible groups}, volume 141 of {\em
  Annals of Mathematics Studies}.
\newblock Princeton University Press, Princeton, NJ, 1996.

\bibitem{Shimura}
G.~Shimura.
\newblock {\em Arithmetic of quadratic forms}.
\newblock Springer Monographs in Mathematics. Springer, New York, 2010.

\bibitem{Springer}
T.~A. Springer.
\newblock {\em Linear algebraic groups}.
\newblock Modern Birkh\"auser Classics. Birkh\"auser Boston Inc., Boston, MA,
  second edition, 2009.

\bibitem{TitsCorvallis}
J.~Tits.
\newblock Reductive groups over local fields.
\newblock In {\em Automorphic forms, representations and {$L$}-functions
  ({P}roc. {S}ympos. {P}ure {M}ath., {O}regon {S}tate {U}niv., {C}orvallis,
  {O}re., 1977), {P}art 1}, Proc. Sympos. Pure Math., XXXIII, pages 29--69.
  Amer. Math. Soc., Providence, R.I., 1979.

\bibitem{Vol}
I.~Vollaard.
\newblock The supersingular locus of the {S}himura variety for {${\rm
  GU}(1,s)$}.
\newblock {\em Canad. J. Math.}, 62(3):668--720, 2010.

\bibitem{VolWed}
I.~Vollaard and T.~Wedhorn.
\newblock The supersingular locus of the {S}himura variety of {${\rm
  GU}(1,n-1)$} {II}.
\newblock {\em Invent. Math.}, 184(3):591--627, 2011.

\bibitem{Zink}
Th. Zink.
\newblock Windows for displays of {$p$}-divisible groups.
\newblock In {\em Moduli of abelian varieties ({T}exel {I}sland, 1999)}, volume
  195 of {\em Progr. Math.}, pages 491--518. Birkh\"auser, Basel, 2001.

\end{thebibliography}
\bibliographystyle{plain}

\end{document}